\newtheorem{thm}{Theorem}[section]
\newtheorem{lemma}[thm]{Lemma}
\newtheorem{prop}[thm]{Proposition}
\newtheorem{defn}[thm]{Definition}
\newtheorem{remark}[thm]{Remark}
\numberwithin{equation}{section}
\newcommand{\Z}{\mathbb{Z}}
\newcommand{\R}{\mathbb{R}}
\newcommand{\C}{\mathbb{C}}
\newcommand{\bP}{\mathbb{P}}
\newcommand{\quat}{\mathbb{H}}
\newcommand{\bS}{\mathbb{S}}
\newcommand{\cS}{\mathcal{S}}
\newcommand{\consti}{\mathbf{i}\,}
\newcommand{\WT}[1]{\widetilde{#1}}
\newcommand{\bD}{\mathcal{D}}
\newcommand{\pt}{\mathrm{pt}}
\newcommand{\cA}{\mathcal{A}}
\newcommand{\End}{\mathrm{End}}
\newcommand{\Stab}{\mathrm{Stab}}
\newcommand{\one}{\mathbf{1}}
\newcommand{\Fuk}{\mathrm{Fuk}}
\newcommand{\bL}{\mathbb{L}}
\newcommand{\cO}{\mathcal{O}}
\newcommand{\Per}{\mathrm{Per}}
\begin{document}

\title{Mirror of Atiyah flop in symplectic geometry and stability conditions}
\author[Fan]{Yu-Wei Fan}
\address{Department of Mathematics\\Harvard University}
\email{ywfan@math.harvard.edu}
\author[Hong]{Hansol Hong}
\address{Center of Mathematical Sciences and Applications\\Harvard University}
\email{hhong@cmsa.fas.harvard.edu, hansol84@gmail.com}
\author[Lau]{Siu-Cheong Lau}
\address{Department of Mathematics and Statistics\\ Boston University}
\email{lau@math.bu.edu}
\author[Yau]{Shing-Tung Yau}
\address{Department of Mathematics\\Harvard University}
\email{yau@math.harvard.edu}

\begin{abstract}
We study the mirror operation of the Atiyah flop in symplectic geometry.  We formulate the operation for a symplectic manifold with a Lagrangian fibration.  Furthermore we construct geometric stability conditions on the derived Fukaya category of the deformed conifold and study the action of the mirror Atiyah flop on these stability conditions.

\end{abstract}

\maketitle
\section{Introduction}
Flop is a fundamental operation in birational geometry.  By the work of Koll\'ar \cite{Kollar}, any birational transformation of compact threefolds with nef canonical classes and $Q$-factorial terminal singularities can be decomposed into flops.

Atiyah flop is the most well-known among many different kinds of flops.  It contracts a $(-1,-1)$ curve and resolves the resulting conifold singularity by a small blow-up, producing a $(-1,-1)$ curve in another direction, see Figure \ref{fig:Atiyah-flop}.

\begin{figure}[h]
\begin{center}
\includegraphics[scale=0.3]{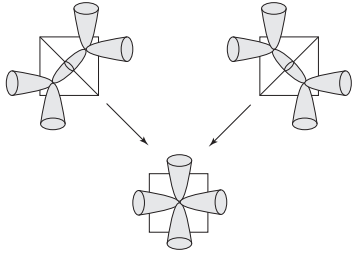}
\caption{The Atiyah flop.}
\label{fig:Atiyah-flop}
\end{center}
\end{figure}

In mirror symmetry, complex and symplectic geometries are dual to each other.  Flop is an important operation in complex geometry.  It is natural to ask whether there is a mirror operation in symplectic geometry.  In this paper we focus on the mirror of Atiyah flop.

SYZ mirror symmetry of a conifold singularity is well-known by the works of \cite{Gross-eg,CLL,CM2,AAK,CPU,KL}.  A conifold singularity is given by $u_1v_1=u_2v_2$ in $\C^4$.  There are two different choices of anti-canonical divisors which turn out to be mirror to each other, namely $D_1=\{u_2v_2=1\}$ and $D_2=\{(u_2-1)(v_2-1)=0\}$.  Consider the resolved conifold $\cO_{\bP^1}(-1)\oplus \cO_{\bP^1}(-1)$, with the divisor $D_2$ deleted.  Its SYZ mirror is given by the deformed conifold $\{(u_1,v_1,u_2,v_2,z) \in \C^4 \times \C^\times: u_1v_1=z+q, u_2v_2=z+1\}$.  Here $q$ is the K\"ahler parameter of the resolved conifold, namely $q=e^{-A}$ where $A$ is the symplectic area of the $(-1,-1)$ curve in the resolved conifold.  The deformed conifold contains a Lagrangian sphere whose image in the $z$-coordinate projection is the interval $[-1,-q] \subset \C$.  The Lagrangian sphere is mirror to the holomorphic sphere in the resolved conifold.

Now take the Atiyah flop.  The K\"ahler moduli of the resolved conifold is the punctured real line $\R-\{0\}$, consisting of two K\"ahler cones $\R_+$ and $\R_-$ of the resolved conifold and its flop respectively.  $A$ serves as the standard coordinate and flop takes $A \in \R_+$ to $-A \in \R_-$.  Thus the Atiyah flop amounts to switching $A$ to $-A$, or equivalently $q$ to $q^{-1}$.  As a result, the SYZ mirror changes from $\{u_1v_1=z+q, u_2v_2=z+1\}$ to $\{u_1v_1=z+q^{-1}, u_2v_2=z+1\}$.

However the above two manifolds are symplectomorphic to each other, and hence they are just equivalent from the viewpoint of symplectic geometry.  Unlike Atiyah flop in complex geometry, the mirror operation does not produce a new symplectic manifold.  It is not very surprising since symplectic geometry is much softer than complex geometry.

In contrast to complex geometry, the mirror flop is just a symplectomorphism rather than a new symplectic manifold.  First observe that this symplectomorphism is non-trivial (Section \ref{sec:flop-sympl}).

\begin{prop} \label{prop:sympl-intro}
Given a symplectic threefold $(X,\omega)$ and a Lagrangian three-sphere $S \subset X$, we have another symplectic threefold $(X^\dagger,\omega^\dagger)$ with a corresponding Lagrangian three-sphere $S^\dagger \subset X^\dagger$, together with a symplectomorphism $f^{(X,S)}:(X,\omega) \to (X^\dagger,\omega^\dagger)$.  It has the property that $f^{(X^\dagger,S^\dagger)} \circ f^{(X,S)} = \tau_S^{-1}$, where $\tau_S$ is the Dehn twist along the Lagrangian sphere $S$.
\end{prop}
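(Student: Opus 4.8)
The plan is to reduce everything to a local model near $S$ and to realize the flop as a symplectic monodromy. First I would invoke the Weinstein Lagrangian neighborhood theorem to identify a tubular neighborhood $U$ of $S$ with a neighborhood of the zero section in $T^*S \cong T^*\sphere{3}$. Since eliminating $z$ from the defining equations of the deformed conifold yields the smooth affine quadric $\{u_1v_1 - u_2v_2 = q-1\} \subset \C^4$, which is symplectomorphic to $T^*\sphere{3}$ for $q \neq 1$, I would take this quadric, equipped with the projection $\pi = z$ to the $z$-line, as the concrete local model, arranging the identification so that the zero section becomes the distinguished Lagrangian. The map $\pi$ carries two critical values $-q$ and $-1$, and in this model $S$ is exactly the matching cycle fibered over the segment $[-1,-q]$: the vanishing circle of the first conic $u_1v_1=z+q$ collapses at $z=-q$, that of the second conic $u_2v_2=z+1$ collapses at $z=-1$, and over $[-1,-q]$ their product traces out the join $\sphere{1}\ast\sphere{1}\cong\sphere{3}$.

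Next I would build $f^{(X,S)}$ as the symplectic parallel transport of $\pi$ along a compactly supported isotopy of the base that drags the mobile critical value $-q$ to $-q^{-1}$ counterclockwise about the fixed value $-1$, along an arc $\gamma_+$ through the upper half-plane that is the identity near $-1$ and near infinity. Lifting this isotopy through the symplectic connection of $\pi$ and extending by the identity on $X \setminus U$ produces the desired symplectomorphism; since the deformed conifolds with parameters $q$ and $q^{-1}$ are symplectomorphic (as recorded in the excerpt), the target is again a copy of $(X,\omega)$, which I relabel $(X^\dagger,\omega^\dagger)$, with $S^\dagger := f^{(X,S)}(S)$ the matching cycle now fibered over $[-1,-q^{-1}]$. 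Applying the identical recipe---moving the mobile critical value counterclockwise about the fixed value $-1$---to $(X^\dagger,S^\dagger)$ flops $q^{-1}$ back to $q$ and transports $-q^{-1}$ to $-q$ along the continuation $\gamma_-$ through the lower half-plane. Because the double flop returns the parameter to $q$, we have $X^{\dagger\dagger}=X$ and $S^{\dagger\dagger}=S$, so the composite is an honest automorphism of $(X,\omega)$.

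Finally I would identify this composite. The concatenation $\gamma_-\ast\gamma_+$ is a full counterclockwise loop of the critical value around $-1$, equivalently a full counterclockwise loop of the parameter $\epsilon := q-1$ around $0$ in the conifold smoothing $\{u_1v_1-u_2v_2=\epsilon\}$. The symplectic monodromy of this family as $\epsilon$ circles $0$ is the classical Picard--Lefschetz monodromy of the ordinary double point, namely the Dehn twist along the vanishing Lagrangian sphere, which in our model is precisely $S$. Tracking the orientation of the loop together with the orientation of the matching cycle then pins down the exponent and yields $f^{(X^\dagger,S^\dagger)}\circ f^{(X,S)} = \tau_S^{-1}$.

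The hard part will be making the symplectic parallel transport rigorous: $\pi$ is neither proper nor an honest Lefschetz fibration, since its fibers acquire $A_1$ singularities along an entire $\C^\times$ rather than at isolated points, so I must cut $\pi$ off to the neighborhood $U$ (working with a Liouville model of $T^*\sphere{3}$), control the horizontal lift near the boundary and near the singular loci, and verify that transport is complete and compactly supported so that extension by the identity is smooth. The second delicate point is matching the parallel-transport monodromy with the standard Dehn twist on the nose, including the precise exponent $-1$, for which I would compare against the explicit geodesic-flow normal form of $\tau_S$ on $T^*\sphere{3}$.
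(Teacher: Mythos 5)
Your proposal captures exactly the geometric content of the paper's argument --- the flop is a half-rotation of the two critical values of the double conic fibration $z$ around each other, the flop-flop is a full rotation, and this full rotation is the Picard--Lefschetz monodromy of the conifold smoothing $\{u_1v_1-u_2v_2=\epsilon\}$ around $\epsilon=0$, i.e.\ the (inverse) Dehn twist along the vanishing sphere $S$ --- but the mechanism you use to produce the symplectomorphism is different. The paper never invokes symplectic parallel transport: it writes down explicit $T^2$-equivariant diffeomorphisms (the gluing map $\rho$ rescales $(u_1,v_1)$ and $(u_2,v_2)$ by $\bigl(\frac{z-b}{z-a}\bigr)^{1/2}$ and its reciprocal, and the interior map $\psi$ rotates the $z$-plane by $\pi$ about $\frac{a+b}{2}$ while multiplying the conic coordinates by $\consti$), interpolates them with a cutoff function, and corrects to a symplectomorphism by a Moser argument; the composite is then visibly the antipodal map on $S$ together with a cut-off $2\pi$ rotation of the $z$-plane, which is the normal form of $\tau_S^{-1}$. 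Your route buys conceptual clarity (the identification with the ODP monodromy is immediate), but the step you flag as ``the hard part'' --- completeness and compact support of parallel transport for a non-proper fibration whose critical loci are entire curves of $A_1$ points rather than isolated Lefschetz singularities --- is a genuine unresolved gap in your writeup, and it is precisely what the paper's explicit-formula-plus-Moser strategy is designed to sidestep. The cleanest way to close that gap within your framework is the same device the paper uses implicitly: everything is equivariant under the Hamiltonian $T^2$-action on $(u_1,v_1,u_2,v_2)$, so the fibration and the base isotopy descend to the symplectic reduction (the $z$-plane itself), where every curve is Lagrangian and the lifted map is automatically a fiberwise symplectomorphism up to a Moser correction; without some such reduction, horizontal lifts near the boundary $\bigl||u_i|^2-|v_i|^2\bigr|=L$ and along the singular loci need separate control. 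One further small point: the paper first builds $X^\dagger$ as a genuinely new object by excising $V$ and regluing $U^\dagger$, and only afterwards observes that $\rho$ extends over $U$ so that $X^\dagger\cong X$ and $\rho^\dagger\circ\rho=\mathrm{Id}$ gives the canonical identification $X^{\dagger\dagger}=X$; your version essentially takes $X^\dagger=X$ from the outset, which suffices for the composition formula but should be stated as a construction rather than inferred from the symplectomorphism type of the deformed conifold.
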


We shall regard $X$ and $X^\dagger$ as the same symplectic manifold using the above symplectomorphism $f^{(X,S)}$.

We need to endow a symplectic threefold with additional geometric structures in order to make it more rigid, so that the effect of the mirror flop can be seen.  In the above local case, $\{u_1v_1=z+q, u_2v_2=z+1\}$ and $\{u_1v_1=z+q^{-1}, u_2v_2=z+1\}$ simply have different complex structures.  However in general requiring the existence of a complex structure on a symplectic manifold would be too restrictive.  Friedman \cite{Friedman} and Tian \cite{Tian} showed that there are topological obstructions to complex smoothing of conifold points; Smith-Thomas-Yau \cite{STY} found the mirror statement for topological obstructions to K\"ahler resolution of conifold points.  

In this paper, we consider two kinds of geometric structures, namely Lagrangian fibrations, and Bridgeland stability conditions on the derived Fukaya category.  First consider a symplectic threefold $X$ equipped with a Lagrangian fibration $\pi: X \to B$.  Let $S \subset X$ be a Lagrangian sphere.  We assume that $\pi$ around $S$ is given by a local model of Lagrangian fibration on the deformed conifold, where $S$ is taken as the vanishing sphere under a conifold degeneration, see Definition \ref{def:con_Lag}.  We call such a fibration to be conifold-like around $S$.  Then we make sense of the mirror flop by doing a local surgery around $S$ and obtain another Lagrangian fibration $\pi^\dagger: X \to B$.  ($X$ and $X^\dagger$ have been identified by the above symplectomorphism $\rho_{X,S}$.)

\begin{thm} \label{thm:fib-intro}
Given a symplectic threefold $(X,\omega)$ with a Lagrangian fibration $\pi:X\to B$ which is conifold-like around a Lagrangian three-sphere $S \subset X$, there exists another Lagrangian fibration $\pi^\dagger:X \to B$ with the following properties.
\begin{enumerate}
\item $\pi^\dagger$ is also conifold-like around $S$.
\item The images of $S$ under $\pi$ and $\pi^\dagger$ are the same, denoted by $\underline{S}$.  They are one-dimensional affine submanifolds in $B$ away from discriminant locus.
\item $\pi^\dagger = \pi$ outside a tubular neighborhood of $S$.  In particular the affine structures on $B$ induced from $\pi$ and $\pi^\dagger$ are identical away from a neighborhood of $\underline{S}$.
\item The induced orientations on $\underline{S}$ from $\pi$ and $\pi^\dagger$ are opposite to each other.
\end{enumerate}
\end{thm}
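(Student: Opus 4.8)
The plan is to reduce everything to the local conifold model and to transport the standard fibration of the flopped conifold back to $X$ via the symplectomorphism of Proposition~\ref{prop:sympl-intro}. By the conifold-like hypothesis (Definition~\ref{def:con_Lag}), there is a tubular neighborhood $N(S) \subset X$ on which $(X,\omega,\pi)$ is identified with the deformed conifold $\{u_1v_1 = z+q,\ u_2v_2 = z+1\}$ carrying its standard Lagrangian fibration $\pi_q$, generated by the two Hamiltonian circle actions with moment maps $\mu_1 = |u_1|^2 - |v_1|^2$, $\mu_2 = |u_2|^2-|v_2|^2$ together with a third action coordinate built from $z$; here $S$ is the vanishing $3$-sphere lying over the segment $\underline{S}$ joining the two loci $\{z=-q\}$ and $\{z=-1\}$ at which the first and second circles respectively collapse. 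Outside $N(S)$ I would simply set $\pi^\dagger := \pi$, so that property~(3) holds by construction and all remaining work is confined to $N(S)$.

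Inside $N(S)$ the flop replaces the parameter $q$ by $q^{-1}$, so the flopped model is the deformed conifold $X^\dagger = \{u_1v_1 = z + q^{-1},\ u_2 v_2 = z+1\}$ with its own standard fibration $\pi^\ddagger$ and vanishing sphere $S^\dagger$. Proposition~\ref{prop:sympl-intro} supplies a symplectomorphism $f^{(X,S)} \colon (X,\omega) \to (X^\dagger,\omega^\dagger)$ carrying $S$ to $S^\dagger$, and I would define $\pi^\dagger := \pi^\ddagger \circ f^{(X,S)}$ on $N(S)$. Since $f^{(X,S)}$ is a symplectomorphism and $\pi^\ddagger$ is a Lagrangian fibration, $\pi^\dagger$ is automatically a Lagrangian fibration whose fibers are the $f^{(X,S)}$-preimages of those of $\pi^\ddagger$; this gives property~(1), conifold-likeness being a feature transported by symplectomorphisms. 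For property~(2), one has $f^{(X,S)}(S)=S^\dagger$ and $\pi^\ddagger(S^\dagger)$ is again a segment joining the two collapse loci, which under the chosen affine identification of the bases is the same segment $\underline{S}\subset B$.

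The crux is to make the two definitions of $\pi^\dagger$ agree on the collar $\partial N(S)$ and to verify property~(4). Since $f^{(X^\dagger,S^\dagger)}\circ f^{(X,S)} = \tau_S^{-1}$ is supported near $S$, I may take $f^{(X,S)}$ to equal the identity (relative to the fibrations) away from $N(S)$, intertwining $\pi^\ddagger$ with $\pi_q$ on the collar; away from $\underline{S}$ the two models $X_q$ and $X_{q^{-1}}$ are interchanged by an honest fiber-preserving symplectomorphism, so that $\pi^\ddagger \circ f^{(X,S)} = \pi$ there and the glued map is a smooth Lagrangian fibration. Property~(4) is then the geometric content of the flop: the orientation of $\underline{S}$ is the one determined by the affine structure, equivalently by recording which of the two vanishing circles collapses at each endpoint. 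The flop $q \mapsto q^{-1}$ interchanges the collapse loci $\{z=-q\}$ and $\{z=-1\}$, hence swaps the two vanishing cycles at the endpoints of $\underline{S}$ and reverses its induced orientation.

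I expect the main obstacle to be exactly this simultaneous demand of properties~(3) and~(4): I must reverse the affine orientation of $\underline{S}$ inside $N(S)$ while leaving $\pi$ untouched outside and keeping $\underline{S}$ fixed as a set. Concretely the difficulty is to build the local surgery — the fiber-preserving interpolation between the $q$- and $q^{-1}$-models — so that it fixes the segment setwise, reverses its orientation, stays the identity on the collar, and preserves both the symplectic form and the Lagrangian-fibration structure throughout the interpolation region. Checking that no new discriminant strata or singular fibers are created in this collar, and that the monodromy of $\pi^\dagger$ around $\underline{S}$ matches the reversed affine data, is the technical heart of the argument; the remaining verifications reduce to the explicit structure of the deformed-conifold fibration recalled above.
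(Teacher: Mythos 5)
Your overall strategy --- transport the standard fibration of the flopped local model back through the symplectomorphism $f^{(X,S)}$ of Proposition~\ref{prop:sympl-intro} and glue with $\pi$ outside a neighborhood of $S$ --- is the same as the paper's opening line of proof. But the step you yourself flag as ``the technical heart'' (building the fiber-preserving interpolation between the two models that is the identity on the collar) is precisely the content of the proof, and you do not carry it out. Worse, your intermediate assertion that on the collar ``the two models $X_q$ and $X_{q^{-1}}$ are interchanged by an honest fiber-preserving symplectomorphism, so that $\pi^\ddagger \circ f^{(X,S)} = \pi}$ there'' is not justified and is not literally true for the gluing map as constructed: the map $\rho$ rescales $(u_1,v_1)$ and $(u_2,v_2)$ by $z$-dependent factors, so it does not preserve the moment-map coordinates $\tfrac12(|u_i|^2-|v_i|^2)$, and after the Moser correction there is no a priori reason the two fibrations agree on the overlap. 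So as written the glued map $\pi^\dagger$ is not known to be well defined, let alone a Lagrangian fibration.

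The paper closes this gap with an explicit device you are missing: define $\phi_{u_1,u_2,v_1,v_2}$ as a rotation of the $z$-disc by angle $\pi f(\cdot)$, where $f$ is a cutoff equal to $1$ near $S$ and $0$ near $\partial U$, and set the new fibration to be $\bigl(|\phi_{u_1,u_2,v_1,v_2}(z)-c|,\ \tfrac12(|u_1|^2-|v_1|^2),\ \tfrac12(|u_2|^2-|v_2|^2)\bigr)$. The point that makes this work with no further analysis is that the fibration is $T^2$-equivariant and \emph{every} curve in the two-dimensional symplectic reduction (the $z$-plane) is automatically Lagrangian, so any $T^2$-equivariant deformation of the level curves of $|z-c|$ yields a genuine Lagrangian fibration; one never has to run a Moser argument or check the Lagrangian condition for the interpolated fibers, and agreement with $\pi$ near $\partial U$ is built in because $\phi=\mathrm{id}$ there. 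This also gives property (4) cleanly: the rotation by $\pi$ reverses the tangent direction of the reduced fibers ($v\mapsto -v$), so the orientation on $\underline{S}$ characterized by $\omega(u,v)>0$ flips. Your heuristic that the flop ``swaps the two vanishing cycles at the endpoints'' points in the right direction but is not an argument; you would need to tie it to a concrete definition of the induced orientation, as the paper does.
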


We call the change from $\pi$ to $\pi^\dagger$ to be the A-flop of a Lagrangian fibration along $S$.  As a compact example, consider the Shoen's Calabi-Yau, which admits a conifold-like Lagrangian fibration around certain Lagrangian spheres by the work of Gross \cite{Gross-BB} and Casta\~no-Bernard and Matessi \cite{CM2}.  Then we can apply the A-flop to obtain other Lagrangian fibrations.

More generally we can consider the effect of A-flop along $S$ on Lagrangian submanifolds other than Lagrangian torus fibers.  Given a Lagrangian submanifold $L \subset X$ which has $T^2$-symmetry around $S$ (see Definition \ref{def:T^2-type}), we can construct another Lagrangian submanifold $L^\dagger$ (which also has $T^2$-symmetry around $S$) which we call to be the A-flop of $L$, with the property that $(L^\dagger)^\dagger$ equals to the inverse Dehn twist of $L$ along $S$.

Then we can take A-flop of special Lagrangian submanifolds with respect to a certain holomorphic volume form (if it exists).  Formally we start with a Bridgeland stability condition $(Z,\cS)$  \cite{Bridgeland}  on the derived Fukaya category, where $Z$ is a homomorphism of the K group to $\C$, and $\cS$ is a collection of objects in the derived Fukaya category which are said to be stable.  A stability condition $(Z,\cS)$ is said to be geometric if there exists a holomorphic volume form $\Omega$ such that $Z$ is given by the period $\int_\cdot \Omega$ and $\cS$ is a collection of graded special Lagrangians with respect to $\Omega$.  A-flop should be understood as a change of stability conditions $(Z,\cS) \mapsto (Z^\dagger,\cS^\dagger)$.  

In this paper we realize the above for the local deformed conifold in Section \ref{sec:fordefnc}.  We obtain the following theorem in Section \ref{subsec:bridgelandfuk}.

\begin{thm}\label{thm:aflopstab}
Let $X$ be the deformed conifold $\{u_1v_1=z+q, u_2v_2=z+1, z\not= 0\}$ (where $q\not=1$).  Equip $X$ with the holomorphic volume form $\Omega = d z \wedge d u_1 \wedge d u_2$.  There exists a collection $\cS$ of graded special Lagrangians which defines a geometric stability condition $(Z,\cS)$ on $X$.  Moreover the flop $(Z^\dagger,\cS^\dagger)$ also defines a geometric stability condition with respect to $(f^{(X,S)})^*\Omega_{X^\dagger}$ where $f^{(X,S)}: X \to X^\dagger = \{u_1v_1=z+1, u_2v_2=z+1/q: z\not= 0\}$ is the symplectomorphism in Proposition \ref{prop:sympl-intro}  (and $\Omega_{X^\dagger} = d z \wedge d u_1 \wedge d u_2$ on $X^\dagger$).
\end{thm}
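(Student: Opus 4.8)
The plan is to reduce everything to the geometry of the base of the $z$-projection $\pi_z : X \to \C^\times$, whose only singular fibers sit over the marked points $z=-q$ and $z=-1$ (the puncture $z=0$ being the deleted anticanonical fiber). Writing the two Hamiltonian $S^1$-actions $(u_j,v_j)\mapsto (e^{i\theta_j}u_j, e^{-i\theta_j}v_j)$, every $T^2$-symmetric Lagrangian is a (possibly degenerate) $T^2$-bundle over an arc $\gamma$ in the $z$-plane, with one circle collapsing whenever $\gamma$ meets $-q$ and the other whenever $\gamma$ meets $-1$; the segment $[-q,-1]$ thus recovers the vanishing sphere $S$. First I would restrict $\Omega$ to such a Lagrangian and integrate out the toric directions: $\Omega$ pulls back to $g(z)\,dz\wedge d\theta_1\wedge d\theta_2$ for an explicit holomorphic density $g$, so that the fiber integral is $(\text{const})\,g(z)\,dz$. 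Consequently the graded special Lagrangian condition $\mathrm{Im}(e^{-i\varphi}\Omega)|_L\equiv 0$ becomes the single requirement that $\gamma$ be a straight segment in the flat coordinate $w(z)=\int g\,dz$, and the central charge is the endpoint difference $Z(L_\gamma)=\int_\gamma g\,dz=w(\gamma(1))-w(\gamma(0))$. In particular $Z$ factors through the homology of the base relative to the marked points, and $Z(S)=w(-1)-w(-q)$ is nonzero precisely because $q\neq 1$.

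This reduces the construction of $(Z,\cS)$ to a finite combinatorial problem on the marked plane. I would take $\cS$ to be the graded special Lagrangians lying over straight segments joining the marked points $\{-q,-1,0\}$: the sphere $S$ over $[-q,-1]$ together with the two noncompact thimbles over the rays from $-q$ and from $-1$ to the puncture. These generate the relevant Fukaya category, and under the homological mirror equivalence for the conifold of \cite{CLL,CM2,AAK,CPU,KL} they correspond to the standard module generators, identifying the derived Fukaya category of $X$ with the derived category of modules over a finite-dimensional algebra. The stability axioms --- existence and uniqueness of Harder--Narasimhan filtrations, the strict phase inequalities, and the support property required for a genuine Bridgeland stability condition --- then follow by transporting the geometric central charge $Z$ to this algebraic model and checking them on the finitely many generators; geometrically, the Harder--Narasimhan filtration of a Lagrangian over a broken arc is read off from the unique straightening of that arc through the marked points. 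This establishes that $(Z,\cS)$ is a geometric stability condition.

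For the flop I would observe that $X^\dagger$ is again a deformed conifold, its marked points $\{-1,-1/q\}$ being obtained from those of $X$ by the base rescaling $z\mapsto z/q$; hence the construction of the previous two paragraphs applies verbatim to $X^\dagger$ and produces a standard geometric stability condition $(Z_{X^\dagger},\cS_{X^\dagger})$ with respect to $\Omega_{X^\dagger}$. The content of the theorem is then the identification of the intrinsically defined A-flop $\cS^\dagger$ on $X$ with the pullback $(f^{(X,S)})^{-1}(\cS_{X^\dagger})$. Since $f^{(X,S)}$ is a symplectomorphism it carries graded Lagrangians special with respect to $\Omega_{X^\dagger}$ to graded Lagrangians special with respect to $(f^{(X,S)})^*\Omega_{X^\dagger}$, and it identifies $Z_{X^\dagger}$ with $Z^\dagger$; so once this identification is in place, $(Z^\dagger,\cS^\dagger)$ is geometric by transport of structure along $f^{(X,S)}$. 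To prove the identification I would use the explicit model of $f^{(X,S)}$ from Proposition \ref{prop:sympl-intro} together with Theorem \ref{thm:fib-intro}(4): the A-flop is the surgery reversing the induced orientation of $\underline{S}$ and swapping the two collapsing circles across $[-q,-1]$, which is exactly the effect of $f^{(X,S)}$ on the toric fibration, while the relation $f^{(X^\dagger,S^\dagger)}\circ f^{(X,S)}=\tau_S^{-1}$ pins down the grading shifts so that $(\cS^\dagger)^\dagger=\tau_S^{-1}\cS$ as required.

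The main obstacle is precisely this last identification. While $f^{(X,S)}$ preserves $\omega$, it is not holomorphic, so ``special Lagrangian'' is not preserved on the nose; one must verify by explicit computation that $(f^{(X,S)})^*\Omega_{X^\dagger}$ is a holomorphic volume form making the flopped segments straight and that the fiberwise orientations and gradings match, i.e.\ that the A-flop surgery and the symplectomorphism $f^{(X,S)}$ genuinely agree as operations on $T^2$-symmetric Lagrangians near $S$, and not merely on homology classes. Controlling this inside a tubular neighborhood of $S$, where the two circles degenerate and the coordinates $(z,u_1,u_2)$ are singular, is the delicate point; away from that neighborhood $\pi^\dagger=\pi$ by Theorem \ref{thm:fib-intro}(3) and the matching is immediate.
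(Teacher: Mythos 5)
Your reduction to arcs in the $z$-plane and your treatment of the flop as transport along $f^{(X,S)}$ are in the right spirit, but the core of the construction has a genuine gap: the collection $\cS$ you propose is not the set of stable objects, and with it the Harder--Narasimhan axiom fails. The paper's $\cS$ consists of the whole infinite family of matching spheres $\{S_n : n\in\Z\}$ (arcs from $-q$ to $-1$ with arbitrary winding about the puncture) together with the $\bP^1$-family of torus fibers $(L_c,U_{\rho_z})$ equipped with flat line bundles. Those tori must be declared semistable: the only way to break $(L_c,U_{\rho_z})$ inside $D^b\mathcal{F}$ is the triangle $\bL_1 \to (L_c,U_{\rho_z}) \to \bL_0 \stackrel{[1]}{\to}$, and since $\theta(\bL_0)>\theta(\bL_1)$ the phases are in the wrong order for this to be an HN filtration; a finite $\cS$ consisting of one sphere and two thimbles therefore cannot give every object an HN decomposition. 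Moreover, replacing compact generators by noncompact thimbles running to the puncture moves you out of the category the theorem is about: the relevant category is the compact subcategory $\mathcal{F}$ generated by $S_0$ and $S_1$, mirror to $\bD_{\hat{Y}/Y}$ (complexes supported on the exceptional curve), which has a finite-length heart; thimbles would land you in the wrapped category, mirror to all of $D^b\mathrm{Coh}(\hat{Y}_0)$, where no such heart or chamber structure is available. Finally, ``checking the axioms on the finitely many generators'' does not establish existence of HN filtrations for arbitrary objects.

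What actually carries the paper's proof is a mechanism your proposal does not supply: the noncommutative mirror functor $\Psi$ built from $\bL=S_0\oplus S_1$, an equivalence $D^b\mathcal{F}\simeq D^b_{\rm nil}\,\mathrm{mod}\,\cA$ onto nilpotent modules over the NCCR quiver algebra, which matches central charges ($z_i=\int_{S_i}\Omega$ against dimension vectors). Existence of the Bridgeland stability condition is then imported from the Nagao--Nakajima/Toda classification on the quiver side, and geometricity is precisely the statement---proved by explicit Floer/pearl computations---that the stable nilpotent modules $V_{\pm}(m)$ and the dimension-$(1,1)$ modules are exactly the images of the special Lagrangians $S_n$ and $(L_c,U_{\rho_z})$. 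For the flop, the paper runs the identical construction on $X_{s=1}$ with $S_0',S_1'$, whose phases now sit in the opposite order (the other Nagao--Nakajima chamber), and pulls back along $\rho_\ast$; your worry that $f^{(X,S)}$ is not holomorphic is legitimate, but it is resolved at the categorical level rather than by matching the surgery with the symplectomorphism pointwise near $S$. To salvage your approach you would need to enlarge $\cS$ to include all $S_n$ and the torus fibers, stay in the compact subcategory, and give an actual argument for HN filtrations of arbitrary objects---at which point you would essentially be reconstructing the quiver model.
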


Stability conditions for the derived Fukaya category were constructed for the $A_n$ case by Thomas \cite{Thomas}, for certain local Calabi-Yau threefolds associated to quadratic differentials by Bridgeland-Smith \cite{BS,Smith}, and for punctured Riemann surfaces with quadratic differentials by Haiden-Katzarkov-Kontsevich \cite{HKK}.  In this paper we construct stability conditions on the derived Fukaya category of the deformed conifold by applying the mirror functor construction in \cite{CHL,CHL2}; in the mirror side we use the results of Nagao-Nakajima \cite{NN} about stability conditions on the noncommutative resolved conifold (see Theorem \ref{thm:nncl}).

\begin{thm}[see Theorem \ref{thm:loccatid}]
The mirror construction in \cite{CHL2} applied to the deformed conifold $X$ produces the noncommutative resolved conifold $\cA$ given by Equation \eqref{eq:mir-A}.  In particular, there is a natural equivalence of triangulated categories
\begin{equation}\label{eqn:mainequiv}
\Psi: D^b \mathcal{F} \to D_{\rm nil}^b \mathrm{mod} \, \cA
 \end{equation}
where $\mathcal{F}$ is a subcategory of $\Fuk(X)$ generated by Lagrangians spheres, and $D_{\rm nil}^b \mathrm{mod} \, \cA$ is a subcategory of $D^b \mathrm{mod}\, \cA$ consisting of modules with nilpotent cohomology.
\end{thm}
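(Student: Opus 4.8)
The plan is to read off the statement from the localized mirror functor of Cho--Hong--Lau \cite{CHL,CHL2}, taking as reference object the direct sum $\mathbb{L}$ of the Lagrangian spheres that generate $\mathcal{F}$. First I would set up the relevant Floer data: the odd-degree generators of $\CF(\mathbb{L},\mathbb{L})$ become the arrows of a quiver $Q$ (one vertex per sphere), and I assemble the noncommutative variables dual to these generators into a weak bounding cochain $b$, so that the variables themselves are the arrows of $Q$. Here I expect $Q$ to be the two-node Klebanov--Witten quiver, with arrows $a_1,a_2$ and $b_1,b_2$ between the two nodes, and the mirror algebra $\cA$ is by construction the quotient of the completed path algebra by the relations imposed by weak unobstructedness.

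The technical heart is the computation of the $b$-deformed $A_\infty$-operations, equivalently of the noncommutative potential $W$ defined by $\sum_k m_k(b,\dots,b) = W\cdot \mathbf{1}_{\mathbb{L}}$. Its coefficients count holomorphic polygons with corners at the Floer generators and boundary on the spheres; I expect them to assemble into the Klebanov--Witten superpotential, so that $\cA = \Jac(Q,W)$ reproduces Equation \eqref{eq:mir-A}. Proving that the only surviving contributions are the anticipated quartic ones---and fixing signs and orientations coherently---is the main obstacle, since one must bound all polygon classes rather than a single disk. Here I would exploit the $T^2$-symmetry of the local model (the same symmetry used in Theorem \ref{thm:fib-intro}) to constrain the relevant moduli spaces to a finite, explicitly computable list, and to rule out higher-order corrections.

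Granting the identification of $\cA$, the construction of \cite{CHL2} yields an $A_\infty$-functor $L \mapsto \CF((\mathbb{L},b),L)$, the $\cA$-module structure coming from the operations $m_k(b,\dots,b,\cdot)$; passing to cohomology and taking the twisted/derived enlargement gives the triangulated functor $\Psi$. Because each generating sphere is compact, the groups $\CF((\mathbb{L},b),L)$ are finite-dimensional and the arrows of $Q$ act nilpotently on them, so $\Psi$ does land in $D^b_{\mathrm{nil}}\,\mathrm{mod}\,\cA$; this compactness is precisely what singles out the nilpotent subcategory as the correct target.

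It remains to promote $\Psi$ to an equivalence, which I would do by a generation argument combined with a morphism-space comparison. On the A-side $\mathcal{F}$ is split-generated by the components of $\mathbb{L}$ by definition, while on the B-side the simple nilpotent modules---the images of the spheres---split-generate $D^b_{\mathrm{nil}}\,\mathrm{mod}\,\cA$. The comparison of morphisms is the Koszul-dual pairing between the Floer endomorphism algebra $\End_{\mathcal{F}}(\mathbb{L})$ and $\mathrm{Ext}^\bullet_{\cA}$ of the simples: once the potential computation shows these cyclic $A_\infty$-algebras are Koszul dual, $\Psi$ is fully faithful on the generator, and a standard Morita-type argument upgrades it to an equivalence $D^b\mathcal{F} \simeq D^b_{\mathrm{nil}}\,\mathrm{mod}\,\cA$. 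Feeding this equivalence into the stability results of Nagao--Nakajima \cite{NN} (Theorem \ref{thm:nncl}) is what makes the geometric stability conditions of Theorem \ref{thm:aflopstab} accessible.
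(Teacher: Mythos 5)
Your overall architecture coincides with the paper's: take $\mathbb{L}=\bL_0\oplus\bL_1$, run the localized mirror functor of \cite{CHL2}, identify the resulting quiver algebra with the NCCR of the conifold, and then upgrade $L\mapsto \CF((\mathbb{L},b),L)$ to an equivalence by generation plus a morphism-space comparison. Two of your steps, however, contain genuine errors. First, you conflate the two potentials in the \cite{CHL2} formalism. The relations defining $\cA$ do \emph{not} come from a central element $W$ with $\sum_k m_k(b,\dots,b)=W\cdot\one_{\mathbb{L}}$: since $b$ has degree $1$, the output $m(e^b)$ sits in degree $2$ while $\one_{\mathbb{L}}$ has degree $0$, so the unit coefficient is forced to vanish (the paper remarks exactly this — only the strict Maurer--Cartan equation has solutions here). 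The relations are instead the coefficients of the degree-$2$ non-unit generators $\bar X,\bar Y,\bar Z,\bar W$ in the Maurer--Cartan obstruction; these happen to be the cyclic derivatives of the quartic quiver superpotential $\Phi=(xyzw)_{cyc}-(wzyx)_{cyc}$, which is a different object from your $W$. As written, $\Jac(Q,W)$ with $W$ the unit coefficient would give the free path algebra. Second, your argument that $\Psi$ lands in $D^b_{\rm nil}\,\mathrm{mod}\,\cA$ because compactness forces finite-dimensionality and hence nilpotency is false for this algebra: the $(1,1)$-dimensional point modules \eqref{eqn:ccparay} with all four arrows nonzero are finite-dimensional but not nilpotent. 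The correct argument, which the paper uses, is that the generators $\bL_0,\bL_1$ map to the vertex simples (their $m_1^b$-cohomology is one-dimensional and supported at a single vertex with all arrows acting by zero), and the triangulated subcategory generated by nilpotent modules consists of nilpotent modules.

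Beyond these gaps, your route to full faithfulness differs from the paper's in a way worth noting. You propose a Koszul-duality pairing between $\End_{\mathcal{F}}(\mathbb{L})$ and $\mathrm{Ext}^\bullet_{\cA}$ of the simples; the paper instead invokes \cite[Theorem 6.10]{CHL2}, which gives injectivity of $\Psi_1$ on cohomology for free, and then closes the argument by a pure dimension count, using Van den Bergh's equivalence $D^b_{\rm nil}\,\mathrm{mod}\,\cA\cong\bD_{\hat Y/Y}$ together with the computation of $\End(\mathcal{O}_C(-1)[1]\oplus\mathcal{O}_C)$ from \cite{CPU}. Your approach would also work but requires independently establishing the Koszul duality, which amounts to the same $A_\infty$-computation you leave as ``expected''; the paper sidesteps this by importing the full $A_\infty$-structure on $\CF(\mathbb{L},\mathbb{L})$ (only $m_2$ and $m_3$ are nonzero) from \cite[Theorem 7.1]{CPU}, re-derived via pearl trajectories. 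If you intend to carry out the polygon count yourself, the relevant structure is the double conic fibration over the $z$-plane together with the Hopf-link configuration of the two intersection circles $S^1_a,S^1_b$ inside each sphere, rather than the $T^2$-symmetry alone.
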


The relation between the mirror construction in \cite{CHL2} and the SYZ construction is summarized in Figure \ref{fig:con-MS-mod}.  The SYZ construction uses Lagrangian torus fibration coming from degeneration to the large complex structure limit.  The noncommutative mirror construction in \cite{CHL2} uses Lagrangian vanishing spheres coming from degeneration to the conifold point.

\begin{figure}[h]
\begin{center}
\includegraphics[scale=0.5]{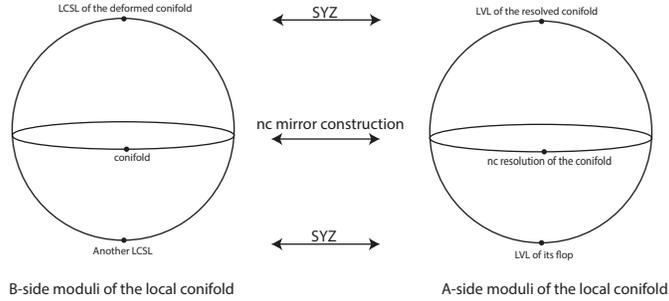}
\caption{The local conifold is self-mirror.  More precisely for the local conifold, a resolution and its flop are just equivalent, and so the upper hemisphere should be identified with the lower hemisphere.}
\label{fig:con-MS-mod}
\end{center}
\end{figure}

We shall prove that stable modules in $D_{\rm nil}^b \mathrm{mod} \, \cA$ with respect to a certain stability condition can be obtained as transformations of special Lagrangians under \eqref{eqn:mainequiv}; as a result the corresponding stability condition on $D^b \mathcal{F}$ is geometric.

\subsection*{Acknowledgement}
We express our gratitude to Matthew Young for drawing our attention to the work of Nagao and Nakajima,
and Yukinobu~Toda for helpful discussions.
S.-C. Lau expresses his gratitude for the AMS-Simons Travel Grant.
The work of H. Hong and S.-T. Yau is substantially supported by Simons Collaboration Grant on Homological Mirror Symmetry.

\section{Review on flops and Bridgeland stability conditions}\label{sec:bsiderev}

In this section, we recall the results by Toda which relate flops with wall-crossings in the space of Bridgeland stability conditions on certain triangulated categories.  For more details and proofs, see \cite{Toda}.

\subsection{Bridgeland stability conditions and crepant small resolutions}

Let $f:\hat{Y}\rightarrow Y$ be a crepant small resolution in dimension three and $C$ the exceptional locus, which is a tree of rational curves $C=C_1\cup\cdots\cup C_N$.

Define the triangulated subcategory $\bD_{\hat{Y}/Y}\subset D^b(\hat{Y})$ to be
\begin{equation}
\bD_{\hat{Y}/Y}:=\{E\in D^b(\hat{Y})\ |\ \mathrm{Supp}(E)\subset C\}.
\label{eq:D/Y}
\end{equation}

Let $^p\Per(\hat{Y}/Y)\subset D^b(\hat{Y})$ ($p=0,-1$) be the abelian categories of perverse coherent sheaves introduced by Bridgeland \cite{Bri}, and 
$$
^p\Per(\bD_{\hat{Y}/Y}):=^p\Per(\hat{Y}/Y)\cap\bD_{\hat{Y}/Y}.
$$

\begin{prop}[\cite{VdB}] The abelian categories $^0\Per(\bD_{\hat{Y}/Y})$ and $^{-1}\Per(\bD_{\hat{Y}/Y})$ are the hearts of certain bounded t-structures on $\bD_{\hat{Y}/Y}$, and are finite-length abelian categories.
The simple objects in $^0\Per(\bD_{\hat{Y}/Y})$ and $^{-1}\Per(\bD_{\hat{Y}/Y})$ are $\{\omega_C[1],\cO_{C_1}(-1),\ldots\\,\cO_{C_N}(-1)\}$ and $\{\cO_C,\cO_{C_1}(-1)[1],\ldots,\cO_{C_N}(-1)[1]\}$ respectively.
\end{prop}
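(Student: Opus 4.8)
The plan is to transport the whole question to the noncommutative side via Van den Bergh's tilting equivalence, where the perverse hearts become ordinary module categories and the support condition becomes finite-dimensionality; the identification of simple objects is then a matter of matching the vertex-simple modules with their geometric counterparts, and the two perversities are exchanged by relative duality. Since every assertion is local over $Y$, I would first reduce to the case where $Y=\mathrm{Spec}\,R$ is affine and $C=f^{-1}(0)$ sits over the unique singular point $0$.

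First I would invoke the tilting bundle $\mathcal{P}=\cO_{\hat Y}\oplus\mathcal{P}_1\oplus\cdots\oplus\mathcal{P}_N$ of Van den Bergh \cite{VdB} and the resulting equivalence $\Phi=R\Hom(\mathcal{P},-):D^b(\hat Y)\xrightarrow{\sim}D^b(\mathrm{mod}\,A)$ with $A=\End(\mathcal{P})$, normalized so that $\Phi$ sends $^{-1}\Per(\hat Y/Y)$ to the standard heart $\mathrm{mod}\,A$. The subcategory $\bD_{\hat Y/Y}$ of objects set-theoretically supported on $C$ corresponds under $\Phi$ to the full subcategory of complexes with finite-dimensional cohomology: indeed $A$ is module-finite over $R$, and support on $C=f^{-1}(0)$ matches support at the closed point $0\in\mathrm{Spec}\,R$, i.e. annihilation by a power of the maximal ideal.

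This immediately yields the first two claims. The perverse t-structure is a bounded t-structure on $D^b(\hat Y)$, and its truncation functors preserve the support condition (cones and cohomology sheaves of objects supported on $C$ remain supported on $C$), so it restricts to a bounded t-structure on $\bD_{\hat Y/Y}$ with heart $^p\Per(\bD_{\hat Y/Y})$. Under $\Phi$ this heart becomes the category of finite-dimensional $A$-modules, which is Artinian, hence a finite-length abelian category. For the third claim I would identify the simple $A$-modules $S_0,S_1,\ldots,S_N$ attached to the $N+1$ vertices of Van den Bergh's quiver and compute $\Phi^{-1}(S_i)$: the curve vertices give $\Phi^{-1}(S_i)=\cO_{C_i}(-1)[1]$ for $i\geq 1$, while the distinguished vertex gives $\Phi^{-1}(S_0)=\cO_C$, producing the stated list for $^{-1}\Per$. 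The list for $^0\Per$ then follows by applying relative Grothendieck--Serre duality $\mathbb{D}=R\mathcal{H}om(-,\omega_{\hat Y})[3]$, which (since $f$ is crepant) exchanges $^0\Per$ and $^{-1}\Per$ and sends $\cO_C\mapsto\omega_C[1]$ and $\cO_{C_i}(-1)[1]\mapsto\cO_{C_i}(-1)$.

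The main obstacle is the explicit matching in the third step: one must show that each geometric object $\cO_{C_i}(-1)$ (resp. $\cO_C$) is simple in the perverse heart and corresponds to the correct quiver vertex. This requires controlling how the tilting summands $\mathcal{P}_i$ restrict to the individual components and how they glue along the tree $C=C_1\cup\cdots\cup C_N$, together with the $\mathrm{Ext}$-computation underlying the duality isomorphism. For the conifold ($N=1$) this reduces to a transparent two-vertex verification on a single $(-1,-1)$-curve, which is the case relevant to the rest of the paper.
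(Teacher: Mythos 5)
This proposition is quoted in the paper as background from \cite{VdB} and no proof is given there, so there is nothing internal to compare against; judged on its own terms, your outline is correct and is essentially the argument in the cited source. You pass through Van den Bergh's tilting equivalence $R\Hom(\mathcal{P},-)$ to $\mathrm{mod}\,A$, identify $\bD_{\hat{Y}/Y}$ with finite-dimensional (equivalently, nilpotent) modules to get the bounded t-structure and finite length, match the vertex simples with $\cO_C$ and $\cO_{C_i}(-1)[1]$, and obtain the $^0\Per$ list from the $^{-1}\Per$ list via relative duality exchanging the two perversities (and indeed $R\mathcal{H}om(-,\omega_{\hat{Y}})[3]$ sends $\cO_C\mapsto\omega_C[1]$ and $\cO_{C_i}(-1)[1]\mapsto\cO_{C_i}(-1)$, since $\omega_{C_i}\cong\cO_{C_i}(-2)$). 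The one step you rightly flag as nontrivial --- that the quiver-vertex simples transport to exactly these geometric objects --- is the content of Van den Bergh's Proposition 3.5.7--3.5.8 and does require the $R\Hom(\mathcal{P},-)$ computations you describe; for the conifold case actually used in this paper ($N=1$) it is the elementary two-vertex check you mention.
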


\begin{thm}[\cite{Bri}\cite{Chen-flop}] Let $g:\hat{Y}^{\dagger}\rightarrow Y$ be the flop of $f$, and $\phi:\hat{Y}\dashrightarrow \hat{Y}^{\dagger}$ be the canonical birational map. Then the Fourier-Mukai functor with the kernel $\cO_{\hat{Y}\times_Y \hat{Y}^{\dagger}}\in D^b(\hat{Y}\times \hat{Y}^{\dagger})$ is an equivalence
$$
\Phi_{\hat{Y}\rightarrow \hat{Y}^{\dagger}}^{\cO_{\hat{Y}\times_Y \hat{Y}^{\dagger}}}:D^b(\hat{Y})\overset{\cong}\longrightarrow D^b(\hat{Y}^{\dagger}).
$$
This equivalence restricts to an equivalence $\bD_{\hat{Y}/Y} \overset{\cong}\longrightarrow \bD_{\hat{Y}^\dagger/Y}$ and takes $^0\Per(\hat{Y}/Y)$ to $^{-1}\Per(\hat{Y}^{\dagger}/Y)$.
\end{thm}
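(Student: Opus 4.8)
The plan is to separate the statement into an equivalence part and a $t$-structure part, and to exploit that everything is local over $Y$. Since $f$ and $g$ are isomorphisms away from the exceptional loci and the kernel $\cO_{\hat{Y}\times_Y \hat{Y}^{\dagger}}$ is supported on the fiber product over $Y$, all of the content is concentrated over the finitely many singular points of $Y$; away from them the functor $\Phi := \Phi_{\hat{Y}\rightarrow \hat{Y}^{\dagger}}^{\cO_{\hat{Y}\times_Y \hat{Y}^{\dagger}}}$ is just push-forward along the isomorphism $\phi$. I would therefore reduce to a neighborhood of $C$ throughout.

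To prove that $\Phi$ is an equivalence I would follow Bridgeland's realization of $\hat{Y}^{\dagger}$ as the fine moduli space of perverse point sheaves on $\hat{Y}$ relative to $Y$. The universal family on $\hat{Y}\times \hat{Y}^{\dagger}$ is identified with $\cO_{\hat{Y}\times_Y \hat{Y}^{\dagger}}$, and since this sheaf is symmetric in the two factors it serves as a Fourier--Mukai kernel in either direction. Viewing it as a functor $\Theta : D^b(\hat{Y}^{\dagger}) \to D^b(\hat{Y})$, it carries the skyscrapers $\{\cO_m : m\in \hat{Y}^{\dagger}\}$, which form a spanning class, to the perverse point sheaves $P_m$. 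The decisive point is that each $P_m$ is a simple object of a finite-length heart of perverse coherent sheaves on $\hat{Y}$ (as recorded in the preceding Proposition), which forces $\Hom(P_m,P_{m'}[i])=0$ for $m\neq m'$ and $\Hom(P_m,P_m)=\C$, while Serre duality on the smooth Calabi--Yau threefold $\hat{Y}$ confines the self-extensions to degrees $0,1,2,3$. Bridgeland's spanning-class criterion then shows $\Theta$ is fully faithful; and because $f$ and $g$ are crepant, $\omega_{\hat{Y}}$ and $\omega_{\hat{Y}^{\dagger}}$ are both pulled back from $Y$, so $\Theta$ commutes with the Serre functors and full faithfulness upgrades to an equivalence. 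Its quasi-inverse is $\Phi$. Finally, the kernel is set-theoretically supported on $C\times_Y C^{\dagger}$ over the singular points, so $\Phi$ sends complexes supported on $C$ to complexes supported on $C^{\dagger}$, giving the restriction $\bD_{\hat{Y}/Y}\xrightarrow{\cong}\bD_{\hat{Y}^{\dagger}/Y}$.

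For the $t$-structure statement I would use that $^0\Per(\bD_{\hat{Y}/Y})$ and $^{-1}\Per(\bD_{\hat{Y}^{\dagger}/Y})$ are finite-length abelian categories and that an equivalence of triangulated categories is exact. It then suffices to check that $\Phi$ carries each simple object $\omega_C[1],\cO_{C_1}(-1),\dots,\cO_{C_N}(-1)$ of $^0\Per(\bD_{\hat{Y}/Y})$ into $^{-1}\Per(\bD_{\hat{Y}^{\dagger}/Y})$, and that $\Phi^{-1}$ carries each simple $\cO_{C^{\dagger}},\cO_{C_1^{\dagger}}(-1)[1],\dots,\cO_{C_N^{\dagger}}(-1)[1]$ back into $^0\Per(\bD_{\hat{Y}/Y})$: closure of a finite-length heart under extensions propagates the containment from the simples to the whole category, and the two inclusions combine to the asserted equality. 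Each check is a finite computation of $\Phi(\cO_{C_i}(-1))$ and $\Phi(\omega_C[1])$ against the fiber-product kernel, performed one component of the tree $C$ at a time using the explicit local model of the flop, under which $\cO_{C_i}(-1)$ is sent to a shifted line bundle of the form $\cO_{C_j^{\dagger}}(-1)[1]$.

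The step I expect to be the main obstacle is the equivalence itself: both the identification of the Fourier--Mukai kernel with the structure sheaf of the (singular, a priori non-derived) fiber product $\hat{Y}\times_Y \hat{Y}^{\dagger}$, and the verification of the strongly-simple orthogonality relations for perverse point sheaves. This is precisely where one uses that $f$ is a crepant small resolution of a threefold, as in the moduli argument of Bridgeland \cite{Bri} and the kernel computation of Chen \cite{Chen-flop}; for a general birational map neither the perverse heart nor the fiber-product kernel would suffice.
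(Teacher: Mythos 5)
This theorem is stated in the paper's review section (Section~\ref{sec:bsiderev}) purely as a citation of \cite{Bri} and \cite{Chen-flop}; the paper supplies no proof of its own, so your proposal can only be measured against the cited sources. Your overall architecture --- realize $\hat{Y}^{\dagger}$ as Bridgeland's moduli of perverse point sheaves, identify the universal family with $\cO_{\hat{Y}\times_Y\hat{Y}^{\dagger}}$ following Chen, verify full faithfulness on the spanning class of skyscrapers, upgrade to an equivalence by crepancy, and then check the $t$-structure statement on the simple objects of the finite-length perverse hearts and propagate by extension-closure --- is the correct skeleton of that argument, and the $t$-structure half is fine as outlined.

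There is, however, a genuine gap at the step you yourself flag as decisive. You assert that each perverse point sheaf $P_m$ is a \emph{simple} object of the perverse heart ``as recorded in the preceding Proposition,'' and that this forces $\Hom(P_m,P_{m'}[i])=0$ for $m\neq m'$. Neither claim is right. The simples of $^{-1}\Per(\bD_{\hat{Y}/Y})$ are $\cO_C$ and the $\cO_{C_i}(-1)[1]$; a perverse point sheaf attached to a point of the exceptional curve is a nontrivial extension of $\cO_{C}$ by $\cO_{C}(-1)[1]$ --- exactly the length-two object displayed in the paper's triangle \eqref{eqn:pervpt} --- so it has Jordan--H\"older length two, not one. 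Moreover, even for honest simples of a finite-length heart, only $\Hom$ in degree $\leq 0$ is controlled formally; the higher $\Ext$'s between distinct objects (here $\Ext^1$ and $\Ext^2(P_m,P_{m'})$ for distinct points $m\neq m'$ on the same exceptional curve, where the supports coincide) do not vanish for formal reasons, and indeed the nonvanishing of $\Ext^1(\cO_C,\cO_C(-1)[1])$ is precisely what makes the perverse point sheaves exist. Establishing these orthogonality relations is the technical core of Bridgeland's proof and requires the intersection theorem from commutative algebra together with the specific geometry of the $(-1,-1)$ (more generally, length-one) contraction; it cannot be replaced by the finite-length formalism. A smaller slip: $\hat{Y}$ need not be Calabi--Yau, so ``Serre duality on the smooth Calabi--Yau threefold'' should be replaced by the observation you make correctly a line later, namely that crepancy gives $\omega_{\hat{Y}}=f^{*}\omega_{Y}$, which acts trivially on objects supported over points of $Y$.
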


Such an equivalence is called \emph{standard} in \cite{Toda}.

Let $\mathrm{FM}(\hat{Y})$ be the set of pairs $(W,\Phi)$, where $W\rightarrow Y$ is a crepant small resolution, and $\Phi:D^b(W)\rightarrow D^b(\hat{Y})$ can be factorized into standard equivalences and the auto-equivalences given by tensoring line bundles. For each $(W,\Phi)\in\mathrm{FM}(\hat{Y})$, there is an associated open subset
$$
U(W,\Phi)\subset\Stab_n(\hat{Y}/Y)
$$
of the space of \emph{normalized} Bridgeland stability conditions on $\bD_{\hat{Y}/Y}$.
A Bridgeland stability condition on $\bD_{\hat{Y}/Y}$ is called \emph{normalized} if the central charge $Z([\cO_x])$ of the skyscraper sheaf at each $x\in C$ is $-1$.

Assume in addition that there is a hyperplane section in $Y$ containing the singular point such that its pullback in $\hat{Y}$ is a smooth surface, Toda proved the following theorem.

\begin{thm}
\cite{Toda} Let $\Stab_n^{\circ}(\hat{Y}/Y)$ be the connected component of $\Stab_n(\hat{Y}/Y)$ containing the standard region $U\left(\hat{Y},\Phi=\mathrm{id}_{D^b(\hat{Y})}\right)$. Define the following union of chambers
$$
\mathcal{M}:=\bigcup_{(W,\Phi)\in\mathrm{FM}(\hat{Y})}U(W,\Phi).
$$
Then $\mathcal{M}\subset\Stab_n^{\circ}(\hat{Y}/Y)$, and any two chambers are either disjoint or equal. Moreover, $\overline{\mathcal{M}}=\Stab_n^{\circ}(\hat{Y}/Y)$.
\end{thm}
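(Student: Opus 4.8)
The plan is to exploit the fact that each perverse heart ${}^p\Per(\bD_{\hat{Y}/Y})$ is a finite-length abelian category with a known finite set of simple objects, so that a normalized stability condition whose heart is such a perverse heart is completely determined by the central charges it assigns to those simples. First I would make each chamber explicit: fix $(W,\Phi)\in\mathrm{FM}(\hat{Y})$, let $\mathcal{A}_{W,\Phi}\subset\bD_{\hat{Y}/Y}$ be the image under $\Phi$ of the relevant perverse heart, and let $s_0,\dots,s_N$ be its simple objects (one being the transform of $\omega_C[1]$ or $\cO_C$, the others the transforms of the $\cO_{C_i}(-1)$). A stability condition with heart $\mathcal{A}_{W,\Phi}$ amounts to a choice of values $Z(s_j)$ in the strict upper half plane $\mathbb{H}=\{re^{i\pi\phi}:r>0,\ 0<\phi\le 1\}$, subject to the single linear normalization $Z([\cO_x])=-1$. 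This realizes $U(W,\Phi)$ as an open subset of the linear slice $\{Z\in\Hom(K(\bD_{\hat{Y}/Y}),\mathbb{C}):Z([\cO_x])=-1\}$, and by Bridgeland's deformation theorem \cite{Bridgeland} the central charge map $\mathcal{Z}\colon\Stab_n\to\Hom(K,\mathbb{C})$ is a local homeomorphism onto this slice, so each $U(W,\Phi)$ is an open chamber identified with its image in central-charge space.

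For the containment and the disjoint-or-equal property, the key point is that a finite-length heart is recovered from the stability condition as the extension closure of the objects with Harder--Narasimhan phases in $(0,1]$. Hence the map sending a point of $\mathcal{M}$ to its heart is constant on each $U(W,\Phi)$, two chambers carrying distinct hearts must be disjoint, and when two labels $(W,\Phi)$ give the same heart the chambers coincide, which yields the dichotomy. To place every chamber in the connected component of the standard one, I would use that $\Phi$ factors into standard equivalences and line-bundle twists, and that by the preceding theorem each standard equivalence carries ${}^0\Per$ of one resolution to ${}^{-1}\Per$ of its flop. The two perverse hearts differ by a single tilt at a simple object $\cO_{C_i}(-1)$, and crossing the corresponding wall --- where the phase of that simple reaches the boundary of $\mathbb{H}$ --- is realized by an explicit continuous path of stability conditions. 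Concatenating such paths connects $U(W,\Phi)$ to $U(\hat{Y},\mathrm{id})$, proving $\mathcal{M}\subset\Stab_n^\circ$.

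The substantial part is $\overline{\mathcal{M}}=\Stab_n^\circ$, i.e. that the chambers together with their shared walls leave no gaps. Here I would invoke the extra hypothesis that some hyperplane section pulls back to a smooth surface $\Sigma\subset\hat{Y}$: this pins down the numerical Grothendieck group of $\bD_{\hat{Y}/Y}$ and its Euler pairing as a negative-definite lattice spanned by the classes $[\cO_{C_i}(-1)]$ and the point class, so that the walls in the central-charge slice form a locally finite hyperplane arrangement (finite and root-like in the conifold case). The goal is then to show every $\sigma\in\Stab_n^\circ$ lies in $\overline{\mathcal{M}}$. I would argue that after a small generic deformation within $\Stab_n^\circ$ the heart of $\sigma$ is again finite-length with the same numerical simples, and that the only such hearts reachable by tilting from the standard perverse heart are the perverse hearts ${}^p\Per(W/Y)$ themselves. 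Concretely, I would show $\Stab_n^\circ\setminus\overline{\mathcal{M}}$ is open (clear) and closed: a boundary point of $\mathcal{M}$ inside $\Stab_n^\circ$ either lies on a codimension-one wall, hence in $\overline{\mathcal{M}}$, or deforms into a new chamber whose heart is again perverse, hence lands in $\mathcal{M}$. Connectedness of $\Stab_n^\circ$ then forces the complement to be empty.

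The hardest step is this last classification: showing that within the connected component no stability condition has a heart outside the family of perverse hearts, equivalently that the tilting graph of hearts reachable from the standard one is exhausted by flops and line-bundle twists. This is where the negative-definiteness coming from the smooth surface hypothesis is indispensable --- it forces the wall arrangement to be locally finite and prevents the phases of the simples from spiraling, so that each wall-crossing is an honest tilt at a single simple and the resulting heart is again one of the ${}^p\Per(W/Y)$. Controlling degenerations of the central charge on the point class (which the normalization $Z([\cO_x])=-1$ keeps bounded away from $0$) and ruling out accumulation of walls are the technical crux; once these are in place the chamber decomposition is complete and its closure fills $\Stab_n^\circ$.
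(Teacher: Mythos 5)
The paper does not actually prove this statement --- it is quoted from Toda with a pointer to \cite{Toda} --- so I am judging your outline on its own terms and against the paper's own description of the conifold example in Section 2.2. There is a genuine error at the very first step: you identify the chamber $U(W,\Phi)$ with the locus of normalized stability conditions whose heart is the finite-length perverse heart $\mathcal{A}_{W,\Phi}$, and claim this locus is open because one may freely choose the central charges of the finitely many simples in the strict upper half plane. This is incompatible with the normalization $Z([\cO_x])=-1$. In the conifold case the simples of $^{-1}\Per(\bD_{\hat{Y}/Y})$ are $\cO_C$ and $\cO_C(-1)[1]$, and $[\cO_x]=[\cO_C]+[\cO_C(-1)[1]]$ in $K(\bD_{\hat{Y}/Y})$; if both central charges lie in $\{re^{i\pi\phi}: r>0,\ 0<\phi\le 1\}$ and sum to $-1$, they are forced to be negative real numbers. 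Hence the locus of normalized stability conditions supported on a perverse heart has real codimension one --- it is a wall, not a chamber. This matches the paper's own summary of the conifold: the hearts over the open half-planes $\quat$ and $\quat^{\dagger}$ are $\mathrm{Coh}_{\hat{Y}/Y}$ and $\mathrm{Coh}_{\hat{Y}^{\dagger}/Y}$, which are \emph{not} finite-length, and the perverse heart appears only on the real line separating them.

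Consequently the remaining steps rest on the wrong chamber description. In the correct setup, $U(W,\Phi)$ consists of stability conditions pushed forward from the geometric region of $W$, where the objects $\Phi(\cO_x)$ are all stable of one phase and the heart is a tilt of $\Phi(\mathrm{Coh}_{W/Y})$; the disjoint-or-equal dichotomy is proved by recovering $W$, and $\Phi$ up to a line-bundle twist, from that data, not by comparing finite-length hearts. The finite-length perverse hearts enter only at the boundary of a chamber, where the heart degenerates and one tilts at one of their finitely many simples to cross into the chamber of the flopped resolution --- so the roles you assign to $\mathrm{Coh}_{W/Y}$ and to the perverse hearts must be interchanged throughout. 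A further inaccuracy: the Euler form is not negative definite on the span of the $[\cO_{C_i}(-1)]$ and the point class, since $\chi(\cO_x,\cO_x)=0$ on a threefold; the smooth-surface hypothesis is used in Toda's argument to construct and deform stability conditions near the boundary via restriction to the surface, not to furnish a definite lattice.
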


In other words, we can obtain the whole connected component $\Stab_n^{\circ}(\hat{Y}/Y)$ from the standard region $U(\hat{Y},\mathrm{id})$ by sequence of flops and tensoring line bundles.

\subsection{The conifold}\label{subsec:exatiyah}

Let $Y=\mathrm{Spec}\ \C[[x,y,z,w]]/(xy-zw)$ and $f:\hat{Y}\rightarrow Y$ be the blowing up at the ideal $(x,z)$. As computed in \cite{Toda},
$$
\Stab_n^{\circ}(\hat{Y}/Y)/\mathrm{Aut}^0(\bD_{\hat{Y}/Y})\cong\mathbb{P}^1-\{\ 3\ \textrm{points}\ \}.
$$

Let $\hat{Y}^{\dagger}\rightarrow Y$  be the blowing up at the other ideal $(x,w)$. Then the three removed points correspond to the large volume limit points of $\hat{Y}$ and $\hat{Y}^{\dagger}$, and the conifold point.

More precisely, $\mathbb{P}^1-\{\ 3\ \mathrm{points}\ \}$ is obtained by gluing the upper and lower half complex planes $\quat,\quat^{\dagger}$, and the real line with the origin removed.
The hearts of the Bridgeland stability conditions in $\quat$ and $\quat^{\dagger}$ are given by $\mathrm{Coh}_{\hat{Y}/Y}$ and $\mathrm{Coh}_{\hat{Y}^{\dagger}/Y}$
respectively.
The heart of the Bridgeland stability conditions on the real line is given by the perverse heart $^0\mathrm{Per}(\bD_{\hat{Y}/Y})\cong^{-1}\mathrm{Per}(\bD_{\hat{Y}^{\dagger}/Y})$.

Let $C,C^{\dagger}$ be the exceptional curves of $\hat{Y}\rightarrow Y,\hat{Y}^{\dagger}\rightarrow Y$ respectively.
Then the equivalence $\bD_{\hat{Y}/Y}\longrightarrow \bD_{\hat{Y}^\dagger/Y}$ satisfies
\begin{enumerate}
\item $\Phi(\cO_C(-1))=\cO_{C^{\dagger}}(-1)[1]$.
\item $\Phi(\cO_C(-2)[1])=\cO_{C^{\dagger}}$.
\item For $x\in C$, the cohomology of $E:=\Phi(\cO_x)\in\bD_{\hat{Y}^{\dagger}/Y}$ vanish except for $H^0(E)=\cO_{C^{\dagger}}$ and $H^{-1}(E)=\cO_{C^{\dagger}}(-1)$.
\end{enumerate}

One can observe the following wall-crossing phenomenon: the skyscraper sheaves $\cO_x\in\bD_{\hat{Y}/Y}$ are stable objects with respect to the stability conditions on the upper half plane $\quat$, but are unstable in $\quat^{\dagger}$. In fact, its image under $\Phi$ is a two term complex $E$ that fits into the following exact triangle:
\begin{equation}\label{eqn:pervpt}
\mathcal{O}_{C^\dagger}(-1)[1] \to E  \to \mathcal{O}_{C^\dagger} \stackrel{[1]}{\to}
\end{equation}
Note that the usual skyscraper sheaf at a point in $C^\dagger$ can be obtained by switching the first and the third terms in \eqref{eqn:pervpt}.

\begin{remark}
It is well-known that if $C$ is a $(-1,-1)$-curve, then the `flop-flop' functor is the same as the inverse of the spherical twist by $\cO_C(-1)$, i.e. $\Phi_{\hat{Y}^{\dagger}\rightarrow \hat{Y}}^{\cO_{\hat{Y}\times_Y \hat{Y}^{\dagger}}}\circ\Phi_{\hat{Y}\rightarrow \hat{Y}^{\dagger}}^{\cO_{\hat{Y}\times_Y \hat{Y}^{\dagger}}}=\mathrm{T}_{\cO_C(-1)}^{-1}$. Proposition \ref{prop:sympl-intro} is the mirror statement of this fact.
\end{remark}

\section{Review on the SYZ mirror of the conifold}\label{sec:revsyz}

SYZ mirror construction for toric Calabi-Yau manifolds was carried out in \cite{CLL} using the wall-crossing techniques of \cite{auroux07}.  The reverse direction, namely SYZ construction for blow-up of $V \times \C$ along a hypersurface in a toric variety $V$ was carried out by \cite{AAK}.  In this section we recall the construction for the conifold $Y = \{(u_1,v_1,u_2,v_2) \in \C^4: u_1v_1 = u_2v_2\}$ as a special case in \cite{CLL,AAK}.  The statement is that $Y - \{u_2v_2=1\}$ is mirror to $Y - (\{u_2=1\} \cup \{v_2=1\})$.  The study motivates the definition of A-flop for Lagrangian fibrations in the next section.

The resolved conifold $\hat{Y} = \mathcal{O}_{\bP^1}(-1) \oplus \mathcal{O}_{\bP^1}(-1)$ is obtained from a small blowing-up of the conifold point $(u_1,v_1,u_2,v_2) = 0$.  It is a toric manifold equipped with a toric K\"ahler form.  We have the $T^2$-action on $\hat{Y}$ given by $(\lambda_1,\lambda_2) \cdot (u_1,v_1,u_2,v_2) = (\lambda_1 u_1, \lambda_1^{-1} v_1, \lambda_2 u_2, \lambda_2^{-1} v_2)$, and we denote the corresponding moment map by $(\mu_1,\mu_2):\hat{Y} \to \R^2$.  Then from the works of Ruan \cite{Ruan-fib}, Gross \cite{Gross-eg} and Goldstein \cite{Goldstein}, there is a Lagrangian fibration
$$ (\mu_1,\mu_2,|zw - 1|): \hat{Y} \to \R^2 \times \R_{\geq 0}.$$
It serves as one of the local models of Lagrangian fibrations which were used by Casta\~no-Bernard and Matessi \cite{CM1,CM2} to build up global fibrations from a tropical base manifold.

\begin{figure}[h]
\begin{center}
\includegraphics[scale=0.3]{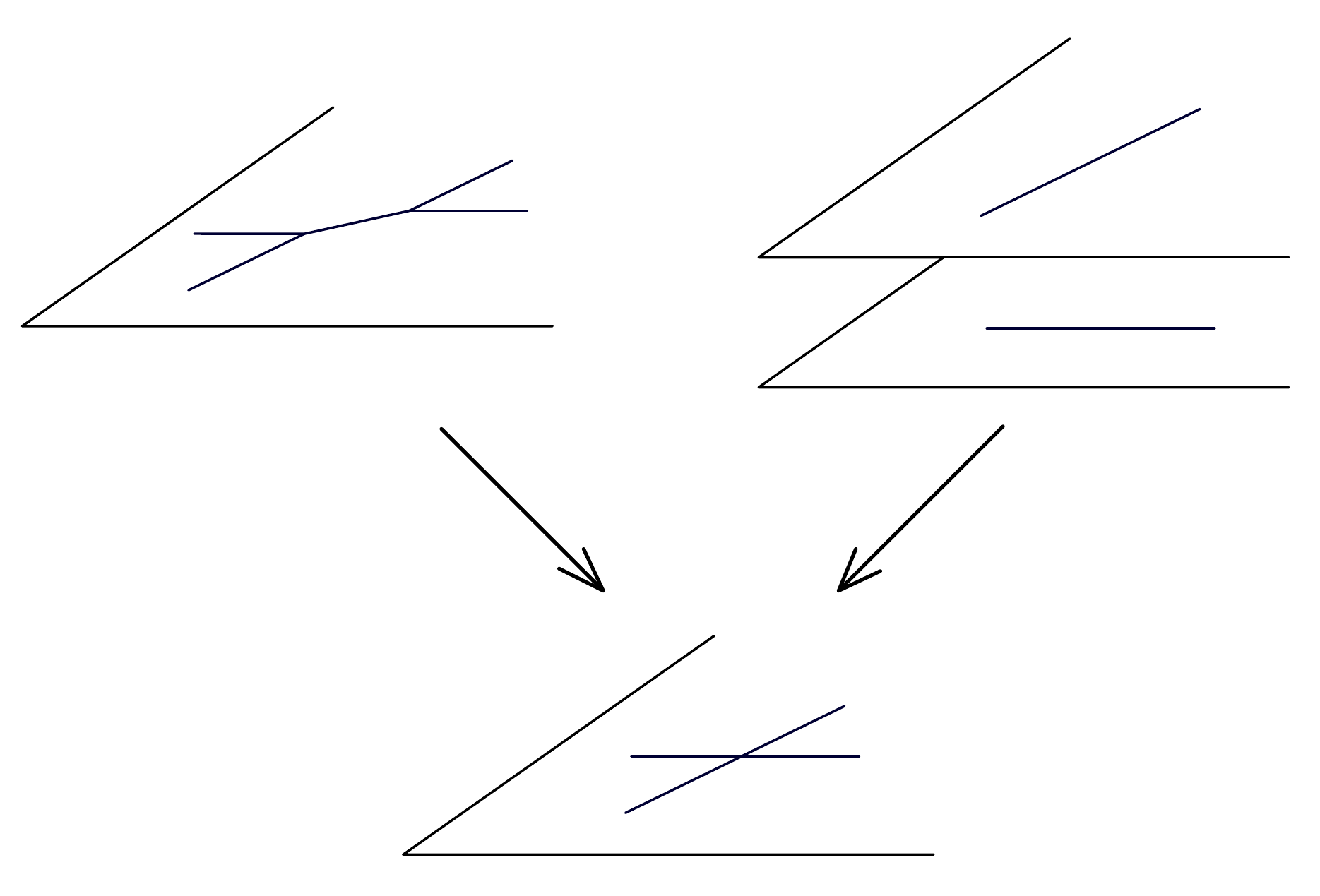}
\caption{The base and discriminant loci of Lagrangian fibrations in conifold transition.}
\label{fig:conifold}
\end{center}
\end{figure}

The discriminant locus of this fibration is contained in the hyperplane 
$$\{(x_1,x_2,x_3) \in \R^2 \times \R_{\geq 0}: x_3=1\},$$ 
see the top left of Figure \ref{fig:conifold}.  This hyperplane is known as the wall for open Gromov-Witten invariants of torus fibers as it contains images of holomorphic discs of Maslov index zero.  By studying wall-crossing of holomorphic discs emanated from infinity divisors (of a compactification of $\hat{Y}$), \cite{CLL} constructed the SYZ mirror of $\hat{Y} - \{zw = 1\}$.

\begin{thm}[A special case in \cite{CLL} and \cite{AAK}]
The SYZ mirror of $\hat{Y} - \{u_2v_2 = 1\}$ is
$$ \{(u_1,v_1,u_2,v_2): u_1,v_1 \in \C, u_2, v_2 \in \C^\times: u_1v_1 = 1+u_2+v_2+qu_2v_2\} $$
where $q = \exp - \big(\textrm{complexified symplectic area of the zero section } \bP^1 \textrm{ of } \hat{Y}\big)$.
\end{thm}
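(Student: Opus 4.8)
The plan is to run the SYZ program of \cite{CLL,AAK} for this toric Calabi--Yau threefold. I would work over the open chamber of the base $\R^2\times\R_{\geq0}$ lying away from both the discriminant locus and the wall $\{x_3=1\}$ recalled above. There each fiber of $(\mu_1,\mu_2,|u_2v_2-1|)$ is a smooth Lagrangian three-torus, and the mirror is assembled as the moduli space of pairs $(T_b,\nabla)$ consisting of a fiber together with a flat $U(1)$-connection. Two of the three circle directions of $T_b$ are the honest toric directions and produce coordinates $u_2,v_2\in\C^\times$ via $\exp(-(\text{area}))\cdot\mathrm{hol}_\nabla$; the remaining direction, transverse to the wall, produces two semi-flat coordinates on the two adjacent chambers that, after instanton correction, glue into a conic fibration $u_1v_1=W$ over the $(u_2,v_2)$-plane. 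Thus the mirror will take the form $\{u_1v_1=W,\ u_2,v_2\in\C^\times\}$, and the whole content is the computation of the potential $W$.

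Next I would carry out the wall-crossing along $\{x_3=1\}$, which is exactly the locus where a fiber bounds Maslov-index-zero holomorphic discs. By Auroux's wall-crossing formula \cite{auroux07}, crossing the wall changes the semi-flat coordinate by the generating function recording the jump in counts of Maslov-index-two discs; I would verify that this jump is precisely the gluing $u_1\leftrightarrow v_1$ encoded in the conic equation, so that the two $\C$-factors $u_1,v_1$ degenerate along the projection of the holomorphic sphere while the $u_2,v_2$-directions remain in $\C^\times$. This both fixes the shape $u_1v_1=W$ and reduces $W$ to a count of basic disc families.

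The core computation is $W$ itself. Using the toric structure of $\hat Y=\cO_{\bP^1}(-1)\oplus\cO_{\bP^1}(-1)$ and degenerating to the large-volume limit, the basic families of Maslov-index-two discs bounded by a torus fiber are in bijection with the toric prime divisors of $\hat Y$, i.e.\ with the four rays of its fan. Their symplectic areas and holonomies contribute the four monomials $1$, $u_2$, $v_2$, and $qu_2v_2$, where the single factor $q=\exp(-(\text{complexified area of the zero section }\bP^1))$ is picked up by the disc class whose boundary wraps the exceptional curve. Summing gives $W=1+u_2+v_2+qu_2v_2$ and hence the stated equation.

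The main obstacle is to show that these four basic families exhaust the contributions: one must rule out corrections from higher Maslov index, sphere and disc bubbling, and multiple covers, and confirm that the wall-crossing transformation carries no higher-order tail beyond the claimed monomial jump. For the present toric Calabi--Yau threefold this is exactly what the open mirror theorem and the explicit wall-crossing analysis of \cite{CLL} provide, together with the \cite{AAK} realization of $\hat Y$ as a blow-up presented in hypersurface form; the residual task is to check that the Goldstein--Gross fibration on $\hat Y-\{u_2v_2=1\}$, with the anticanonical divisor $\{u_2v_2=1\}$ removed, meets the hypotheses under which those results apply.
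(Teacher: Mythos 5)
Your outline is correct and follows exactly the route of the cited sources: the paper itself offers no proof of this statement, deferring entirely to \cite{CLL} and \cite{AAK}, and your sketch (semi-flat coordinates on the chambers of the Gross--Goldstein fibration, Auroux wall-crossing across $\{x_3=1\}$ gluing the two charts into a conic fibration $u_1v_1=W$, and the identification of $W$ with the sum of the four basic disc classes attached to the toric prime divisors) is precisely the argument carried out there. The one substantive point you correctly flag as the residual task --- that the generating functions of open Gromov--Witten invariants multiplying each monomial are trivial for the resolved conifold, so no higher-order corrections in $q$ appear --- is indeed what the open mirror theorem of \cite{CLL} supplies for this geometry.
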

Take the change of coordinates $\tilde{u}_2 = q^{1/2}u_2 + 1/q^{1/2}, \tilde{v}_2 = q^{1/2}v_2 + 1/q^{1/2}$.  (Here we have fixed a square root of $q$.)
Then the equation becomes $u_1v_1 = u_2v_2 + 1-1/q$ and the divisors are $u_2=1/q^{1/2}$ and $v_2=1/q^{1/2}$.  Further rescaling $(u_1,v_1,u_2,v_2)$ by $q^{1/2}$, the SYZ mirror is the deformed conifold 
$$\tilde{Y} = \{(u_1,v_1,u_2,v_2) \in \C^4: u_1v_1 = u_2v_2 + (q-1)\}$$ 
with the divisor $\{(u_2-1)(v_2-1)=0\}$ deleted.
To conclude, we have the mirror pair $\hat{Y} - \{u_2v_2 = 1\}$ and $\tilde{Y} - \{(u_2-1)(v_2-1)=0\}$.

Taking the Atiyah flop of the $(-1,-1)$ curve in $\hat{Y}$ amounts to switching $q$ to $1/q$.  As a result, the mirror of $\hat{Y} - \{zw=1\}$ changes from 
$$\{u_1v_1=u_2v_2+(q-1)\}-\{(u_2-1)(v_2-1) = 0\}$$ 
to 
$$\{u_1v_1=u_2v_2+(1/q-1)\}-\{(u_2-1)(v_2-1) = 0\}$$ 
under flop on $\hat{Y}$.  However changing equation just results in a symplectomorphism.  Thus unlike the flop of a $(-1,-1)$ curve, the mirror flop (of a Lagrangian vanishing sphere in conifold degeneration) does `nothing' to the symplectic manifold.  We need additional geometric structures to detect the mirror flop.  For this local model it is obvious that they can be distinguished by complex structures.  In general we would like to consider geometric structures in the symplectic category.  This will be further studied in the next section.

We can also consider a different relative Calabi-Yau so that Lagrangian spheres can be seen more easily.  First rescale $(u_1,v_1,u_2,v_2)$ so that $\tilde{Y}$ is given as 
$$u_1v_1 - u_2v_2 = q^{1/2} - q^{-1/2}.$$  
Rewrite $\tilde{Y}$ as a double conic fibration,
$$\tilde{Y} = \{(u_1,v_1,u_2,v_2,z) \in \C^5: u_1v_1 = z + q^{1/2}, u_2v_2 = z + q^{-1/2}\}.$$ 
It is equipped with the standard symplectic form from $\C^5$.  If we flop $\hat{Y}$, the mirror $\tilde{Y}$ becomes $\{u_2v_2=z+q^{-1/2}; u_1v_1=z+q^{1/2}\}$.  We take the complement $\tilde{Y} - \{z = c\}$ where $c \in \C - \{-q^{1/2},-q^{-1/2}\}$,   

We have the Lagrangian fibration 
$$
(x_1,x_2,x_3) = (|u_1|^2-|v_1|^2,|u_2|^2-|v_2|^2,|z-c|): \tilde{Y} \to \R^2 \times \R_{\geq 0}
$$
where the boundary divisor is exactly $\{z = c\}$.  The discriminant loci are $\{x_1=0,x_3=|q^{1/2}+c|\}$ and $\{x_2=0,x_3=|q^{-1/2}+c|\}$ contained in the walls $\{x_3=|q^{1/2}+c|\}$ and $\{x_3=|q^{-1/2}+c|\}$ respectively, see the top right of Figure \ref{fig:conifold}.  By \cite[Theorem 11.1]{AAK} (or SYZ in \cite{L13} by Minkowski decompositions), the resulting SYZ mirror is the following.

\begin{thm}[A special case in \cite{AAK} and \cite{L13}]
The SYZ mirror of $\tilde{Y} - \{z = c\}$ is $\hat{Y} - (\{u_2=1\} \cup \{v_2=1\})$.
\end{thm}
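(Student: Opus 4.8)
The plan is to run the SYZ construction directly on $\tilde{Y} - \{z=c\}$ and then match the output against the statements of \cite{AAK} and \cite{L13}, which I would cite to close the argument. First I would record the geometry of the given fibration $(x_1,x_2,x_3) = (|u_1|^2-|v_1|^2,\,|u_2|^2-|v_2|^2,\,|z-c|)$. Away from the two walls $W_1 = \{x_3 = |q^{1/2}+c|\}$ and $W_2 = \{x_3 = |q^{-1/2}+c|\}$, over which one of the conics $u_1v_1 = z+q^{1/2}$, $u_2v_2 = z+q^{-1/2}$ degenerates, every fiber is a Lagrangian $T^3$: the product of the two $S^1$'s from the circle actions on the conic factors with the phase circle of $z-c$. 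On the complement of the walls the base inherits an integral affine structure, and the dual torus fibration furnishes the semi-flat mirror. On each chamber I would introduce the local complex coordinates by attaching to each relative class $\beta$ of a disc bounded by a fiber the monomial $T^{\omega(\beta)}\,\mathrm{hol}_{\nabla}(\partial\beta)$.

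Next I would compute the wall-crossing transformations across $W_1$ and $W_2$. These are governed by Maslov-index-zero holomorphic discs, and the local geometry is modelled on the basic conic $\{uv = w\} \subset \C^2 \times \C$ fibered over the $w$-plane: as a fiber crosses a wall, a boundary class splits off a Maslov-zero disc and the semi-flat coordinates are glued by a factor of the form $1 + (\text{monomial})$, exactly as in \cite{auroux07,CLL}. Because the two conic factors degenerate over the two disjoint walls, the corresponding factors act on independent coordinates, and I expect each wall to contribute one such linear factor. Invoking the disc enumeration already used for the previous SYZ theorem — or, combinatorially, the Minkowski decomposition computation of \cite{L13} — I would pin these factors down precisely.

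Finally I would glue the chambers by the wall-crossing transformations, read off the equation cut out by the resulting global complex coordinates, and identify the total space. I expect it to be, up to the coordinate normalizations fixed in the preceding rescalings, the resolved conifold $\hat{Y} = \cO_{\bP^1}(-1)\oplus\cO_{\bP^1}(-1)$, with the data of the two walls $W_1,W_2$ and the removed boundary $\{z=c\}$ translating, through the SYZ dictionary, into the removed locus $\{u_2=1\}\cup\{v_2=1\}$ on $\hat{Y}$. Tracking symplectic areas through the exponents $T^{\omega(\beta)}$ then yields the matching of the Kähler parameter $q$ with the complex parameters; this is precisely the content of \cite[Theorem 11.1]{AAK} and of the computation in \cite{L13}. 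As a consistency check one can compare with the first SYZ theorem, in which the same pair of manifolds appears in the dual roles.

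The main obstacle is the wall-crossing computation of the second step: one must verify that the Maslov-zero disc counts produce exactly the two transition factors needed for the chambers to glue into the \emph{smooth} resolved conifold — into $\hat{Y}$ itself rather than a partial compactification — and that the removed divisors $\{u_2=1\}$ and $\{v_2=1\}$ emerge correctly from the walls and boundary. The secondary difficulty is purely bookkeeping: tracking the two Kähler parameters together with the square-root normalizations introduced in the coordinate changes leading to the double-conic form of $\tilde{Y}$, so that the final equation is the expected one and not off by a reparametrization.
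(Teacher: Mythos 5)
The paper offers no proof of this statement at all: it is quoted verbatim as a special case of \cite[Theorem 11.1]{AAK} and of the Minkowski-decomposition computation in \cite{L13}, so your proposal, which sketches the wall-crossing argument and then closes by citing those same references, is consistent with (and more informative than) what the paper does. Your outline is the standard one and your identification of the two genuine difficulties is accurate. One point of caution about your third step: "read off the equation cut out by the resulting global complex coordinates" suggests you expect the mirror to be a single affine hypersurface, but $\hat{Y}=\cO_{\bP^1}(-1)\oplus\cO_{\bP^1}(-1)$ is not affine --- the naive product of the two wall-crossing factors would only recover the singular conifold $u_1v_1=u_2v_2$. What the constructions in \cite{AAK,L13} actually produce is a toric variety assembled from two charts, one for each chamber cut out by the walls $W_1,W_2$, and the fact that these glue to the \emph{resolved} conifold $\hat{Y}$ (rather than to $\hat{Y}^\dagger$ or to the singular cone) is encoded in the ordering of the two wall radii $|q^{1/2}+c|$ and $|q^{-1/2}+c|$; this is exactly the role of the standing assumption $|a|\neq|b|$ in the paper. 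You flag this as the main obstacle, which is the right instinct, but to make the proof self-contained you would need to carry out that two-chart gluing explicitly rather than reading off one global equation; as written, the step is fully delegated to the cited theorems, which is also precisely what the paper does.
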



Denote $a=-q^{-1/2}$ and $b=-q^{1/2}$, and without loss of generality assume that $a,b$ are real, $c=0$ and $a < b <0$.  Consider the Fukaya category of $\tilde{Y} - \{z=0\}$ generated by the two Lagrangian spheres $S_1$ and $S_2$, where 
\begin{align*} 
S_0 =& \{z=-t, |u_1|=|v_1|,|u_2|=|v_2|: a \leq t \leq b\}, \\
S_1 =& \{z=\exp (t \zeta_1 + (1-t) \zeta_0) \textrm{ for } t \in [0,1], |u_1|=|v_1|,|u_2|=|v_2|\}
\end{align*}
where $\zeta_0 = \log |a| - \pi \consti$ and $\zeta_1 = \log |b| + \pi \consti$.  $S_0$ and $S_1$ are oriented by $dt\wedge d\theta_1\wedge d\theta_2$ where $\theta_1,\theta_2$ are the arguments of $u_1,u_2$ respectively.  They are special Lagrangians and in particular graded by a suitable holomorphic volume form.  (We shall go back to this point in more detail in Section \ref{sec:locmod}.)  Figure \ref{fig:S3} shows $S_0$ in the picture of double conic fibration.

\begin{figure}[h]
\begin{center}
\includegraphics[scale=0.6]{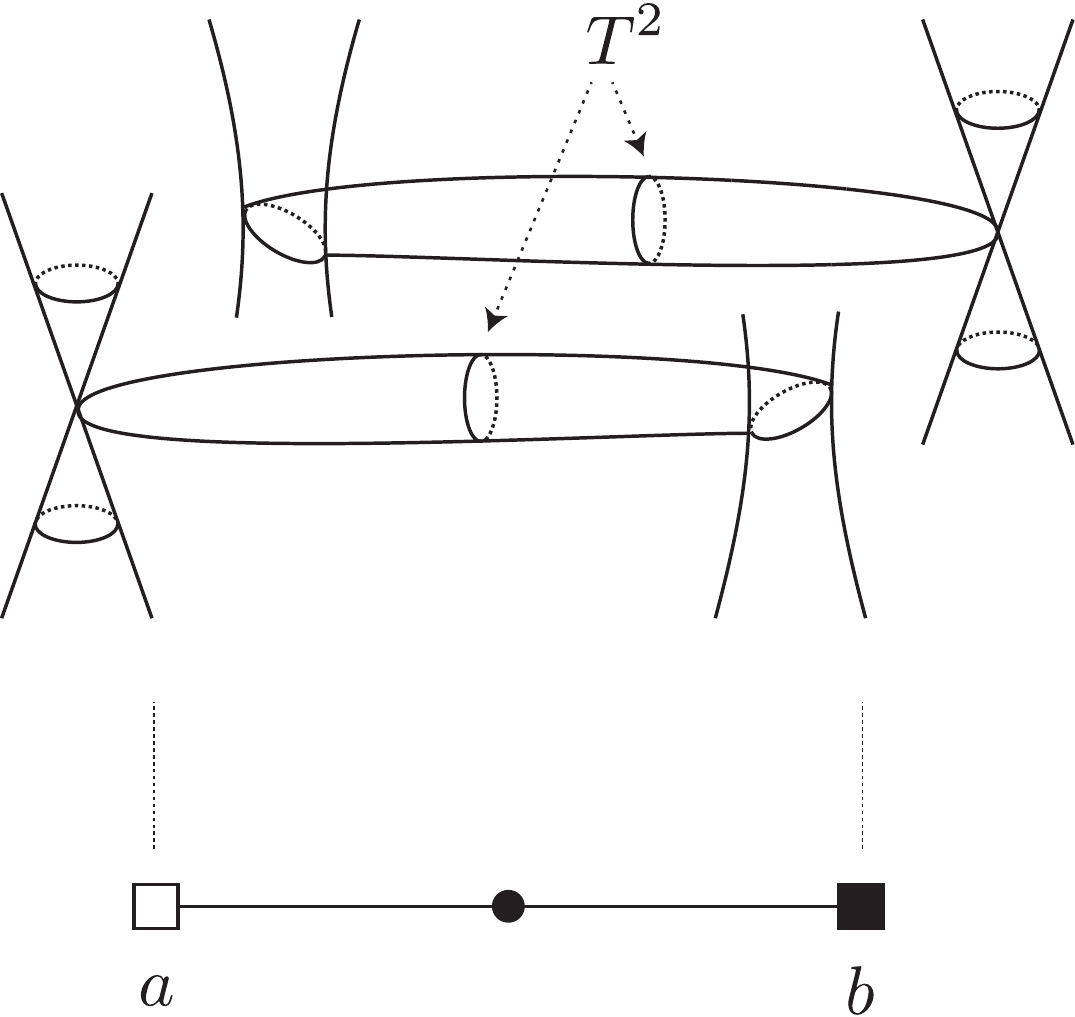}
\caption{Lagrangian $S^3$ seen from the double conic fibration.}
\label{fig:S3}
\end{center}
\end{figure}

Chan-Pomerleano-Ueda \cite{CPU} proved homological mirror symmetry for the mirror pair $(\tilde{Y} - \{z = 0\}, \hat{Y} - (\{u_2=1\} \cup \{v_2=1\})$ making use of the SYZ transformation.  
The result is the following.

\begin{thm}[Theorem 1.2 and 1.3 of \cite{CPU}]\label{thm:citemaincpu}
There is an equivalence between the derived wrapped Fukaya category of $\tilde{Y}_0 := \tilde{Y} - \{z = 0\}$ and the derived category of coherent sheaves of $\hat{Y}_0 := \hat{Y}- (\{u_2=1\} \cup \{v_2=1\})$.
\end{thm}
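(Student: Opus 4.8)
The plan is to establish the equivalence by the SYZ transformation, promoting the Lagrangian torus fibration of $\tilde Y_0$ together with its dual (constructed in \cite{CLL,AAK}) to an $A_\infty$-functor, and then reducing the comparison to a finite collection of generating objects on each side. I would first record the SYZ fibration
$$
(x_1,x_2,x_3)=(|u_1|^2-|v_1|^2,\ |u_2|^2-|v_2|^2,\ |z|):\ \tilde Y_0 \to B\subset \R^2\times\R_{>0},
$$
whose regular fibers are Lagrangian three-tori and whose discriminant loci lie in the two walls $\{x_3=|q^{1/2}|\}$ and $\{x_3=|q^{-1/2}|\}$. By the family-Floer and wall-crossing analysis of \cite{CLL,AAK}, the dual fibration glues, after incorporating the Maslov-index-zero disk corrections across these walls, to the space identified there with $\hat Y_0=\hat Y-(\{u_2=1\}\cup\{v_2=1\})$; this produces the mirror map on objects and, through the SYZ transform, a candidate functor on morphism complexes.

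Next I would fix generators on both sides and reduce the statement to an $A_\infty$ quasi-isomorphism of their endomorphism algebras. On the B-side I would use the tilting bundle $\cO\oplus\cO(-1)$ descending from the noncommutative crepant resolution of the conifold, so that $D^b\mathrm{Coh}(\hat Y_0)\simeq D^b(\mathrm{mod}\,\Lambda)$, where $\Lambda=\End(\cO\oplus\cO(-1))$ is the Klebanov--Witten conifold quiver algebra, restricted compatibly with the deleted divisors. On the A-side I would use that the derived wrapped category $D^b\mathcal{W}(\tilde Y_0)$ is split-generated by the Lagrangian cocores of a Weinstein handle decomposition (by Ganatra--Pardon--Shende, or by Chantraine--Dimitroglou Rizell--Ghiggini--Golovko). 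Exploiting the double conic fibration $\tilde Y=\{u_1v_1=z+q^{1/2},\ u_2v_2=z+q^{-1/2}\}$ over the $z$-line, these cocores are realized as the Lagrangian thimbles emanating from the two critical values $z=-q^{1/2},\,-q^{-1/2}$, which are precisely the mirrors of $\cO$ and $\cO(-1)$ under the SYZ transform.

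The core computation is then to evaluate the wrapped Floer cohomology $HW^*(L_i,L_j)$ of these generators together with its $A_\infty$ product structure, and to match it via the SYZ transform with $\mathrm{Ext}^*_{\hat Y_0}(\mathcal E_i,\mathcal E_j)$ and the induced products, thereby recovering $\Lambda$. Here the conic/Lefschetz structure is useful: the Reeb dynamics at infinity are controlled by the two conic fibers degenerating over $-q^{1/2}$ and $-q^{-1/2}$, and the wrapped generators are counted by holomorphic strips whose enumeration organizes into the two cyclic families of arrows of the conifold quiver. Once the endomorphism $A_\infty$-algebras are shown to be quasi-isomorphic, Morita theory for $A_\infty$-algebras, combined with split-generation on both sides, upgrades the algebra-level isomorphism to the desired triangulated equivalence $D^b\mathcal{W}(\tilde Y_0)\simeq D^b\mathrm{Coh}(\hat Y_0)$.

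The step I expect to be the main obstacle is the precise control of the disk corrections. On the wall-crossing side, one must verify that the family-Floer gluing across the two walls reproduces exactly the complex structure of $\hat Y_0$ with no spurious surviving corrections, which is a transversality and convergence problem for the Maslov-index-zero disks. More seriously, matching the wrapped $A_\infty$-structure with the algebra $\Lambda$ requires pinning down signs and the higher products $m_k$ for $k\ge 3$; showing that these assemble into precisely the relations $\partial W$ of the conifold superpotential $W=a_1b_1a_2b_2-a_1b_2a_2b_1$ is the delicate point, and is where a careful choice of perturbation data together with an appeal to the formality or intrinsic determinacy of the conifold algebra would be needed.
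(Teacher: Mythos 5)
First, a point of reference: the paper does not prove this statement at all --- it is quoted verbatim as Theorems 1.2 and 1.3 of the cited work of Chan--Pomerleano--Ueda, so there is no internal proof to compare your argument against. What the paper does record about the CPU proof (in Section 5) is that the equivalence is established by choosing explicit generators on each side and computing the $A_\infty$-structure on their endomorphism algebras using Abouzaid's chain model, which is broadly the same architecture as your outline: generators, an $A_\infty$ quasi-isomorphism of endomorphism algebras, and generation/Morita theory to upgrade to a triangulated equivalence. In that sense your strategy is the standard and correct one in outline.

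However, as a proof the proposal has genuine gaps beyond the ones you flag yourself. (i) The B-side reduction is not correct as stated: the tilting equivalence $D^b\mathrm{Coh}(\hat{Y})\simeq D^b(\mathrm{mod}\,\Lambda)$ for the conifold quiver algebra $\Lambda$ is a statement about the \emph{full} resolution $\hat{Y}\to Y$; the category in the theorem is $D^b\mathrm{Coh}(\hat{Y}_0)$ for the open subvariety obtained by deleting $\{u_2=1\}\cup\{v_2=1\}$, and this corresponds to a nontrivial localization of $\Lambda$ that your phrase ``restricted compatibly with the deleted divisors'' does not construct. Since the deleted divisors are exactly what make the wrapped category on the A-side nontrivial (they control the Reeb dynamics at infinity), this is not a cosmetic issue. (ii) The identification of the A-side generators (cocores/thimbles over $z=-q^{1/2},-q^{-1/2}$) with the mirrors of $\mathcal{O}$ and $\mathcal{O}(-1)$ is asserted rather than derived; it is precisely the content of the object-level mirror correspondence and cannot be taken as input. (iii) The two computations you correctly identify as the crux --- that the family-Floer/wall-crossing gluing reproduces $\hat{Y}_0$ with no residual corrections, and that the wrapped $A_\infty$-products on the generators realize exactly the relations $\partial W$ of the conifold superpotential with correct signs --- are left entirely open, and these are the theorem. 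What remains is a plausible road map, not a proof.
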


\begin{remark}
The spheres $S_0$ and $S_1$ here were denoted as $S_1$ and $S_0$ in \cite{CPU} respectively.
\end{remark}

In Section \ref{sec:locmod} and \ref{sec:fordefnc}, $\tilde{Y}_0$ will be denoted as $X_{t=0}$ which appears as a member in a family of symplectic manifolds $X_t$.

Restricting to the Fukaya subcategory consisting of $S_0, S_1$, we have the equivalence between $D^b \langle S_0, S_1 \rangle$ and $\mathcal{D}_{\hat{Y}/Y}$ \eqref{eq:D/Y}.  We will revisit this equivalence in Section \ref{sec:locmod} (see Theorem \ref{thm:cpumod} for more details on the equivalence).  Then we will compare the flop on B-side and the corresponding operation on A-side (to be constructed below) using this.

On the other hand, we can take the approach of \cite{CHL2} to construct the noncommutative mirror of $\tilde{Y}_0$.  From homological mirror symmetry between $\tilde{Y}_0$ and its noncommutative mirror, we obtain stability conditions on the derived Fukaya category generated by $S_0$ and $S_1$ in Section \ref{sec:fordefnc}.  We will show that stable objects are special Lagrangian submanifolds.

\section{A-flop in symplectic geometry}

\subsection{Mirror of Atiyah flop as a symplectomorphism} \label{sec:flop-sympl}

Let $(X,\omega)$ be a symplectic threefold and $S$ a Lagrangian sphere of $X$.  By Weinstein neighborhood theorem, a neighborhood of $S \subset X$ can always be identified symplectomorphically with a neighborhood of $S \subset T^*S$, which can be identified with $\{(u_1,v_1,u_2,v_2)\in \C^4: u_1v_1-u_2v_2=\epsilon\}=\{(u_1,v_1,u_2,v_2,z)\in \C^5: u_1v_1=z+\epsilon, u_2v_2=z\}$ for some $\epsilon>0$, where $\omega$ is given by the restriction of the standard symplectic form on $\C^4$.  This identification is adapted to conifold degeneration at the limit $\epsilon \to 0$.

In other words, we take a \emph{conifold-like chart} in the following sense.

\begin{defn} \label{def:con-chart}
A conifold-like chart around $S$ is $(U,\iota)$, where $U$ is an open neighborhood of $S$ and $\iota: U \hookrightarrow \C^\times \times \C^4$ is a symplectic embedding (where $\C^\times \times \C^4$ is equipped with the standard symplectic form) such that the following holds.
\begin{enumerate}
\item The image of $U$ under the embedding is given by
\begin{equation}\label{eqn:doubleconic}
\left\{
\begin{array}{l}
u_1 v_1 = z-a, \\
u_2 v_2 = z-b
\end{array}\right.
\end{equation}
for some real numbers $a<b$, where $\left|z-\frac{a+b}{2}\right| < R$ for some fixed $R>\frac{b-a}{2}$, and $\left||u_1|^2- |v_1|^2\right| < L$, $\left||u_2|^2- |v_2|^2\right| < L$ for some fixed $L>0$.
Here $z$ is the coordinate of $\C^\times$ and $u_1,v_1,u_2,v_2$ are the coordinates of $\C^4$.  We will also denote the image by $U$ for simplicity.
\item The Lagrangian sphere $\iota(S)$ is given by $\left\{|u_1|=|v_1|,|u_2|=|v_2|,z \in [a,b]\right\}$.
\end{enumerate}
\end{defn}

We will simply identify $U$ with its image under $\iota$.  Let
\begin{align*}
V &= \left\{u_1 v_1 = z-a, u_2 v_2 = z-b, \left|z-\frac{a+b}{2}\right| \leq R-\epsilon, \left| |u_i|^2 - |v_i|^2 \right| \leq L-\epsilon \textrm{ for } i=1,2\right\} \subset U,\\
V' &= \left\{u_1 v_1 = z-a, u_2 v_2 = z-b, \left|z-\frac{a+b}{2}\right| < R-2\epsilon, \left| |u_i|^2 - |v_i|^2 \right| < L-2\epsilon \textrm{ for } i=1,2\right\} \subset V
\end{align*}
for $\epsilon>0$ sufficiently small.  We have a diffeomorphism from $U-V$ to the corresponding open subset of 
$$U^\dagger := \left\{u_1 v_1 = z-b, u_2 v_2 = z-a, \left|z-\frac{a+b}{2}\right| < R, \textrm{ for } i=1,2\right\}$$
defined by $z \mapsto z$, $(u_1,v_1) \mapsto \left(\frac{z-b}{z-a}\right)^{1/2} (u_1,v_1)$, $(u_2,v_2)\mapsto \left(\frac{z-a}{z-b}\right)^{1/2} (u_2,v_2)$.  (We choose a branch of the square root.  It is well-defined since $a,b \notin U-V$.)  By Moser argument, we can cook up a symplectomorphism isotopic to this diffeomorphism.  Thus we have fixed a symplectomorphism $\rho$ from $U-V$ to an open subset of $U^\dagger$.  

In analogous to a flop along a $(-1,-1)$ curve in complex geometry, we define another symplectic threefold $(X^\dagger,\omega^\dagger)$ by gluing $(X - V,\omega)$ with a suitable open subset of $U^\dagger$ by the above symplectomorphism $\rho$ on $U-V$.  By construction (an open subset of) $U^\dagger$ is a conifold chart of $X^\dagger$ around the Lagrangian sphere $S^\dagger \subset U^\dagger$ defined by $\left\{|u_1|=|v_1|,|u_2|=|v_2|,z \in [a,b]\right\}$.  


However, unlike the flop along a $(-1,-1)$ holomorphic sphere, $(X^\dagger,\omega^\dagger)$ is just symplectomorphic to the original $(X,\omega)$, since the gluing map $\rho$ can be extended to $U \to U^\dagger$.  Let
$$\tilde{\psi}_{\pm}: \{(u_1,v_1,u_2,v_2,z) \in \C^5: u_1 v_1 = z-a, u_2 v_2 = z-b\} \to \{u_1 v_1 = z-b, u_2 v_2 = z-a\} $$
be defined by $(u_1,v_1,u_2,v_2,z) \mapsto (\pm\consti u_1,\pm\consti v_1,\pm\consti u_2,\pm\consti v_2,-z+a+b)$ respectively.  It commutes with the $T^2$ action 
$$(\lambda_1,\lambda_2) \cdot (u_1,v_1,u_2,v_2,z) = (\lambda_1 u_1,\lambda_1^{-1} v_1,\lambda_2 u_2,\lambda_2^{-1} v_2,z)$$ 
and hence descends to the symplectic reduction, which is simply rotating the $z$-plane by $\pi$ around $(a+b)/2$.  Let $\psi$ be the restriction of $\tilde{\psi}_+$ to $V'$.  
Then we have a symplectomorphism $U \to U^\dagger$ by interpolating between the gluing map $\rho$ and $\psi$ in the region $R-2\epsilon < \left|z-\frac{a+b}{2}\right| < R-\epsilon , L-2\epsilon < \left| |u_i|^2 - |v_i|^2 \right| < L-\epsilon$.  Namely we take a diffeomorphism which equals to $\psi$ on $V'$, and is given by 
\begin{align*}
z \mapsto& e^{\pi\consti f(|z-(a+b)/2|)} (z-(a+b)/2) + (a+b)/2 \\
(u_1,v_1) \mapsto& \left(\frac{e^{\pi\consti f(|z-(a+b)/2|)} (z-(a+b)/2) + (a-b)/2}{z-a}\right)^{1/2} (u_1,v_1)\\
(u_2,v_2) \mapsto& \left(\frac{e^{\pi\consti f(|z-(a+b)/2|)} (z-(a+b)/2) + (b-a)/2}{z-b}\right)^{1/2} (u_2,v_2)  
\end{align*}
on $U-V$.
Here $f(r)$ is a decreasing function valued in $[0,1]$ which equals to $1$ for $r < R-2\epsilon$ and equals to $0$ for $r > R-\epsilon$.  The square root $z^{1/2}$ is taken for the branch $0<\arg(z)\leq \pi$.  By Moser argument we have a symplectomorphism isotopic to this, and $\rho$ is the restriction to $U-V$.

In conclusion, given a symplectic manifold $(X,\omega)$ and a conifold-like chart around a Lagrangian sphere $S$, we have a symplectomorphism $f^{(X,S)}: (X,\omega) \to (X^{\dagger}, \omega^\dagger)$ by a surgery in analogous to flop in complex geometry.  The operation does not produce a new symplectic manifold because symplectic geometry is too soft.  

If we do the operation twice, we obtain $X^{\dagger\dagger}$ which is canonically identified with $X$ as follows.  $X^{\dagger\dagger}$ is glued from $X-V$ and $U=U^{\dagger\dagger}$ by $\rho^\dagger \circ \rho$.  The composition of $z \mapsto z$, $(u_1,v_1) \mapsto \left(\frac{z-b}{z-a}\right)^{1/2} (u_1,v_1)$, $(u_2,v_2) \mapsto  \left(\frac{z-a}{z-b}\right)^{1/2} (u_2,v_2)$ and $z \mapsto z$, $(u_1,v_1) \mapsto \left(\frac{z-a}{z-b}\right)^{1/2} (u_1,v_1)$, $(u_2,v_2) \mapsto \left(\frac{z-b}{z-a}\right)^{1/2} (u_2,v_2)$ is simply identity.  Hence the gluing $\rho^\dagger \circ \rho = \mathrm{Id}$ and $X^{\dagger\dagger} = X$.
Below we see that doing the above operation twice produces the Dehn twist along the Lagrangian sphere $S$, which induces a non-trivial automorphism on the Fukaya category.

\begin{prop}[same as Proposition \ref{prop:sympl-intro}]
$f^{(X^\dagger,S^\dagger)} \circ f^{(X,S)}: X \to X^{\dagger\dagger}=X$ equals to the inverse of the Dehn twist of $X$ along $S$.
\end{prop}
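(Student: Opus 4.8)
The plan is to reduce everything to the local conifold chart $U$ and to recognize the composite as the monodromy of the conifold (Milnor) degeneration, which by Picard--Lefschetz is a power of the Dehn twist along the vanishing sphere $S$.

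First I would localize. Outside $V$ both $f^{(X,S)}$ and $f^{(X^\dagger,S^\dagger)}$ are the gluing maps $\rho$ and $\rho^\dagger$, and it has already been observed that $\rho^\dagger\circ\rho=\mathrm{Id}$; hence $g:=f^{(X^\dagger,S^\dagger)}\circ f^{(X,S)}$ is the identity on $X-V$ and is supported in the neighborhood $V$ of $S$. Since $\tau_S$ is likewise supported near $S$, it suffices to compare $g$ with $\tau_S^{-1}$ inside the chart $U$. Next I would compute $g$ on the inner region $V'$. There $f^{(X,S)}=\psi=\tilde\psi_+$, and for $X^\dagger$ the chart $U^\dagger=\{u_1v_1=z-b,\,u_2v_2=z-a\}$ is brought to standard form by interchanging the two conic factors; a direct check (the same computation showing $\tilde\psi_+$ sends $U$ to $U^\dagger$) shows that the map $\tilde\psi_+^\dagger$ realizing $f^{(X^\dagger,S^\dagger)}$ on the inner region is given by the identical coordinate formula $(u_i,v_i,z)\mapsto(\consti u_i,\consti v_i,-z+a+b)$ and carries $U^\dagger$ back to $U^{\dagger\dagger}=U$. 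Therefore on $V'$ we get $g=\tilde\psi_+^\dagger\circ\tilde\psi_+=\tilde\psi_+^2$, namely $(u_i,v_i,z)\mapsto(-u_i,-v_i,z)$. Writing $S$ as the join $\{u_1=\sqrt{z-a}\,e^{\consti\theta_1},\,u_2=\sqrt{b-z}\,e^{\consti\theta_2}\}\cong\{|u_1|^2+|u_2|^2=b-a\}\cong S^3$, this map is $(\theta_1,\theta_2)\mapsto(\theta_1+\pi,\theta_2+\pi)$ with $z$ fixed, i.e. the antipodal map $A$ on $S$.

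The conceptual step is to identify $g$ with a monodromy. Regarding $U$ as the Milnor fibre $Q_\epsilon=\{u_1v_1-u_2v_2=\epsilon\}$ with $\epsilon=b-a$ of the node $u_1v_1-u_2v_2$, with $S$ its vanishing sphere, the two half-twists assemble into the symplectic isotopy $\Psi_s(u_i,v_i)=(e^{\consti s/2}u_i,\,e^{\consti s/2}v_i)$, which covers the loop $\epsilon\mapsto e^{\consti s}\epsilon$ and satisfies $\tilde\psi_+=\Psi_\pi$ (so $\Psi_\pi:Q_\epsilon\to Q_{-\epsilon}$) and $g=\Psi_{2\pi}$ on $V'$. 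Thus $g$ is a compactly supported representative of the monodromy around $\epsilon=0$, interpolated to the identity near $\partial V$ by the Moser argument; by Picard--Lefschetz this monodromy is a power of $\tau_S$. Because $g|_S=A$ with $A^2=\mathrm{Id}$ while $\tau_S^{k}|_S=A^{k}$, the power is odd, and since $g$ realizes a single loop it must be $\tau_S^{\pm1}$.

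The main obstacle is to pin down the sign, i.e. to show the power is $-1$ rather than $+1$. This is invisible on the zero section (both $\tau_S$ and $\tau_S^{-1}$ restrict to $A$) and must be read off from the twisting of $g$ in the annular region $V-V'$. Concretely I would compare the explicit interpolation produced by the construction---the rotation $z\mapsto e^{\pi\consti f(|z-(a+b)/2|)}(z-(a+b)/2)+(a+b)/2$ together with the branch of $\big(\tfrac{z-b}{z-a}\big)^{1/2}$ entering $\rho$, applied twice---with the normalized geodesic-flow model of the Dehn twist on $T^*S^3$, and check that the accumulated winding as one passes from $S$ out to $\partial V$ is that of $\tau_S^{-1}$. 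The orientation bookkeeping here (governed by the choice $\tilde\psi_+$ with $+\consti$ and by the chosen branches of the square roots) is the delicate point; the outcome is forced to be the inverse twist, in agreement with the mirror statement that the flop--flop functor equals $\mathrm{T}_{\cO_C(-1)}^{-1}$.
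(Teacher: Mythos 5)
Your proposal is correct and follows essentially the same route as the paper: localize to the conifold chart, note the composite is the identity outside $V$, compute it on $V'$ as $\psi^2:(u_i,v_i,z)\mapsto(-u_i,-v_i,z)$ (the antipodal map on $S$), and recognize the interpolation on the annulus $U-V'$ as a full $2\pi$ rotation of the $z$-plane, i.e.\ the inverse Dehn twist. The Picard--Lefschetz/monodromy framing you add is a pleasant conceptual supplement, but the substance is the same explicit computation, and the paper, like you, ultimately reads off the answer (including the sign) from the $e^{2\pi\consti f}$ formula and the chosen branch of the square root rather than by a separate argument.
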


\begin{proof}

$f^{(X^\dagger,S^\dagger)} \circ f^{(X,S)}:X \to X^{\dagger\dagger}$ is given as follows.  Write $X = X^{\dagger\dagger} = (X-V) \cup_{\mathrm{Id}} U$.
The map is identity on $X-V$.  In $V' \subset U$ it is given by $\psi^2$ which maps $u_i \mapsto -u_i, v_i \mapsto -v_i, z \mapsto z$, and in particular is the antipodal map on the three-sphere 
$$\left\{u_1 v_1 = z-a, u_2 v_2 = z-b, z \in [b,a],|u_i|=|v_i| \textrm{ for } i=1,2\right\} \subset V'.$$  
On $U-V'$ it is isotopic to 
\begin{align*}
z \mapsto& e^{2\pi\consti f(|z-(a+b)/2|)} (z-(a+b)/2) + (a+b)/2\\
(u_1,v_1) \mapsto& \left(\frac{e^{2\pi\consti f(|z-(a+b)/2|)} (z-(a+b)/2) + (b-a)/2}{z-a}\right)^{1/2} (u_1,v_1)\\
(u_2,v_2) \mapsto& \left(\frac{e^{2\pi\consti f(|z-(a+b)/2|)} (z-(a+b)/2) + (a-b)/2}{z-b}\right)^{1/2} (u_2,v_2),
\end{align*}
where $f(r)$ is a decreasing function valued in $[0,1]$ which equals to $1$ for $r < R-2\epsilon$ and equals to $0$ for $r > R-\epsilon$.  The square root $z^{1/2}$ is taken for the branch where $0<\arg(z)\leq 2\pi$.  Thus we see that it is the inverse of the Dehn twist.

\end{proof}

\subsection{Lagrangian fibrations}


We see from the last section that the mirror of the Atiyah flop surgery does not produce a new symplectic manifold unfortunately.  We need additional geometric structures in order to distinguish $X^\dagger$ from $X$.  In this section we consider Lagrangian fibrations.  Conceptually it can be understood as a `real polarization', playing the role of the complex polarization (namely the complex structure) for flop of a $(-1,-1)$ curve.

From now on we identify $X$ and $X^\dagger$ as the same symplectic manifold using the symplectomorphism $f^{(X,S)}$.

Let $\pi:X \to B$ be a Lagrangian torus fibration.  We consider a conifold degeneration of $X$ with a vanishing sphere $S$, such that the Lagrangian fibration around $S$ is like the one on the deformed conifold \cite{Gross-eg,Goldstein}.

\begin{defn} \label{def:con_Lag}
Assume the notations in Section \ref{sec:flop-sympl}.  A Lagrangian fibration $\pi$ is said to be of conifold-like if we have the commutative diagram
$$
\begin{diagram}
	\node{U} \arrow{s,l}{\pi} \arrow{e,t}{\iota} \node{\iota(U)} \arrow{s,r}{\left(|z-c|, \frac{1}{2} (|u_1|^2- |v_1|^2), \frac{1}{2} (|u_2|^2 - |v_2|^2)\right)} \\
	\node{f(U)} \arrow{e,b}{\cong} \node{I \times (-L,L) \times (-L,L)}
\end{diagram}
$$
where $\left|c - \frac{a+b}{2} \right|>R$ and $|c-a| \not= |c-b|$, and $I$ is a certain open interval.
\end{defn}

\begin{thm}[Theorem \ref{thm:fib-intro} in the Introduction] \label{thm:fib}
Given a symplectic threefold $(X,\omega)$ with a Lagrangian fibration $\pi:X\to B$ which is conifold-like around a Lagrangian three-sphere $S \subset X$, there exists a Lagrangian fibration $\pi^\dagger:X \to B$ with the following properties.
\begin{enumerate}
\item $\pi^\dagger$ is also conifold-like around $S$.
\item The images of $S$ under $\pi$ and $\pi^\dagger$ are the same, denoted by $\underline{S}$.  It is a one-dimensional affine submanifold in $B$ away from discriminant locus.
\item $\pi^\dagger = \pi$ outside a neighborhood $V \supset S$ and $V \subset U$.  In particular the affine structures on $B$ induced from $\pi$ and $\pi^\dagger$ are identical away from a neighborhood of $\underline{S}$.
\item From above, there is a canonical correspondence between orientations of regular fibers of $\pi$ and that of $\pi^\dagger$.  Fix an orientation of torus fibers of $\pi$ over the image of $U$, and an orientation of a regular fiber of $\pi|_S$ (which is topologically $T^2$).  Then the induced orientations on $\underline{S}$ through $\pi$ and $\pi^\dagger$ are opposite to each other.
\end{enumerate}
\end{thm}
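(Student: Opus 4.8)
The plan is to reduce everything to the explicit double-conic model of Definition \ref{def:con-chart} and to build $\pi^\dagger$ by a $T^2$-invariant surgery supported in $V$. On $U$ the fibration is $\pi = (|z-c|,\,m_1,\,m_2)$ with $m_i=\tfrac12(|u_i|^2-|v_i|^2)$. Since $m_1,m_2$ generate the Hamiltonian $T^2$-action, any $T^2$-invariant function $H$ (equivalently, a function of $z,s_1,s_2$ on the symplectic reduction) satisfies $\{H,m_1\}=\{H,m_2\}=\{m_1,m_2\}=0$; the common level sets of $(H,m_1,m_2)$ are therefore isotropic, and they are Lagrangian tori wherever the map is a submersion. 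So the whole problem collapses to choosing one real-valued $T^2$-invariant function $H$ that equals $|z-c|$ on $U\setminus V$ and realises the flop near $S$, and then checking that $(H,m_1,m_2)$ is a submersion off a codimension-two locus.

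The flopped model is dictated by the symplectomorphism $\psi$ of Section \ref{sec:flop-sympl}, which rotates the $z$-plane by $\pi$ about $\tfrac{a+b}{2}$ and sends $u_i,v_i\mapsto\consti u_i,\consti v_i$. Since $|z-c|\circ\psi=|(a+b)-z-c|=|z-c^\dagger|$ with $c^\dagger:=a+b-c$, and $m_i\circ\psi=m_i$, one has $\pi\circ\psi=(|z-c^\dagger|,\,m_1,\,m_2)$, so near $S$ I take $H=|z-c^\dagger|$. With this choice properties (1),(2),(4) are explicit. Conifold-likeness holds because $c^\dagger$ again satisfies $|c^\dagger-\tfrac{a+b}{2}|>R$ and $|c^\dagger-a|=|c-b|\ne|c-a|=|c^\dagger-b|$. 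The image $\underline S=\{|z-c^\dagger|:z\in[a,b]\}$ equals $\{|z-c|:z\in[a,b]\}$, because $z\mapsto a+b-z$ preserves $[a,b]$ and intertwines the two distance functions; thus $\underline S$ is literally the same segment, its endpoints sitting on the two node loci and its interior staying off them. For (4), along $S$ (parametrised by $z\in[a,b]$) the base coordinate of $\pi$ is $|z-c|$ while that of $\pi^\dagger$ is $|z-c^\dagger|$, which run oppositely as $z$ increases; as $\psi$ acts by an orientation-reversing reflection on $\underline S$ but by the orientation-preserving rotation $u_i\mapsto\consti u_i$ on the $T^2$-fibres, the induced orientation on $\underline S$ is reversed.

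The real work is to interpolate between $H=|z-c^\dagger|$ near $S$ and $H=|z-c|$ on $U\setminus V$ while keeping $(H,m_1,m_2)$ a genuine Lagrangian torus fibration, i.e. while keeping its discriminant one-dimensional. The plan is to slide the \emph{centre}: write $H=|z-c(z,s_1,s_2)|$, where $c(\cdot)$ equals $c^\dagger$ near $S$ and $c$ on $U\setminus V$, and is moved between the two along an arc that stays outside the closed disc $\overline{D}=\{|z-\tfrac{a+b}{2}|\le R\}$. Because the centre never enters $\overline{D}$, the planar gradient $\nabla_z H$ (a near-unit vector pointing from the centre to $z$) never vanishes, so $(H,m_1,m_2)$ is a submersion except where $dm_1$ or $dm_2$ degenerates, i.e. exactly on the two node loci $\{u_1=v_1=0\}$ and $\{u_2=v_2=0\}$. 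Hence the discriminant of $\pi^\dagger$ is again one-dimensional, the two node rays being merely bent inside $V$ so that their positions along $\underline S$ are interchanged. The crucial idea is that routing the centre \emph{around} the disc, off the real axis, reconciles the opposite real-axis gradient directions of $|z-c|$ and $|z-c^\dagger|$ (forced by $|c-a|\ne|c-b|$) without ever forcing $\nabla_z H=0$; a naive straight-line interpolation of the two functions would instead produce spurious critical points, i.e. extra discriminant.

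The main obstacle I foresee is precisely the uniform estimate $\nabla_z H\ne 0$ across the whole transition region, where $c$ must depend on both $z$ and $(s_1,s_2)$: near the edge one slides the centre in the $(s_1,s_2)$-directions to bend the node rays, while near $\partial D$ at small $(s_1,s_2)$ one slides it radially in $z$ to restore $H=|z-c|$ before leaving $V$. One must choose these slides slowly enough that the derivative of the centre cannot overcome the unit-length term in $\nabla_z H$; this is the only place demanding genuine estimates, and is what I expect to be delicate. Granting it, property (3) is immediate: $H=|z-c|$ on $U\setminus V$ and $\pi^\dagger:=\pi$ on $X\setminus V$ agree on the overlap and glue to a global Lagrangian fibration $\pi^\dagger:X\to B$ onto the same affine base, coinciding with $\pi$ away from a neighbourhood of $\underline S$; a Moser argument upgrades the interpolation to a smooth fibration without disturbing the discriminant.
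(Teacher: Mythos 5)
Your construction is essentially the paper's: reduce to the conifold chart, use the $T^2$-symmetry so that any $T^2$-invariant function together with the two moment maps has isotropic common level sets, put $|z-(a+b-c)|$ near $S$, and interpolate back to $|z-c|$ by moving the centre along the circular arc of radius $\left|c-\frac{a+b}{2}\right|$ about $\frac{a+b}{2}$. The one step you defer as ``delicate'' --- the uniform non-vanishing of the $z$-gradient across the transition region --- is in fact automatic in the paper's formulation \eqref{eq:phi}: sliding the centre along that arc is the same as precomposing $|\cdot - c|$ with the twist map $\phi_{u_1,u_2,v_1,v_2}$, which for each fixed $(u_1,v_1,u_2,v_2)$ is a diffeomorphism of the disc $\left|z-\frac{a+b}{2}\right|<R$ no matter how fast the cutoff varies (it rotates each circle about the centre onto itself), and since $c$ lies outside that disc, $|\phi_{u_1,u_2,v_1,v_2}(z)-c|$ has no critical points in $z$ there. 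Hence no estimate is needed, the map is an honest Lagrangian fibration with discriminant exactly the two bent node loci, and the concluding Moser step you invoke is superfluous --- the explicit formula already defines the fibration.
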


\begin{proof}
$\pi^\dagger$ is constructed from the symplectomorphism $f:X \to X^\dagger$ given in the last subsection.  Namely we glue the Lagrangian fibration of $X-V$ with the Lagrangian fibration of $U^\dagger$ by $\rho$.  This gives a Lagrangian fibration on $X^\dagger$, and hence on $X$ by the symplectomorphism $f$.  It is constructed directly as follows.

For each fixed $u_1,u_2,v_1,v_2$, take the diffeomorphism $\phi_{u_1,u_2,v_1,v_2}$ on $\{z \in \C: |z-(a+b)/2| < R\}$ defined by 
\begin{equation}
\phi_{u_1,u_2,v_1,v_2}(z)=e^{\pi\consti f\left(\left(\frac{|z-(a+b)/2|}{R}\right)^2+\left(\frac{|u_1|^2- |v_1|^2}{L}\right)^2+\left(\frac{|u_2|^2- |v_2|^2}{L}\right)^2\right)} (z-(a+b)/2) + (a+b)/2
\label{eq:phi}
\end{equation}
where $f(r)$ is a decreasing function valued in $[0,1]$ which equals to $1$ for $r < 1-2\epsilon$ and equals to $0$ for $r > 1-\epsilon$.  Thus $\phi_{u_1,u_2,v_1,v_2}$ is identity on 
$$\left\{(u_1,v_1,u_2,v_2,z) \in U:\left(\frac{|z-(a+b)/2|}{R}\right)^2+\left(\frac{|u_1|^2- |v_1|^2}{L}\right)^2+\left(\frac{|u_2|^2- |v_2|^2}{L}\right)^2 > 1-\epsilon\right\}.$$ 
Define a fibration $U \to I \times (-L,L) \times (-L,L)$ by 
$$\left(\left|\phi_{u_1,u_2,v_1,v_2}\left(z\right)-c\right|, \frac{1}{2} (|u_1|^2- |v_1|^2), \frac{1}{2} (|u_2|^2 - |v_2|^2)\right).$$  
For $|u_1|=|v_1|,|u_2|=|v_2|$, the resulting level curves of $|\phi(0,0,z)-c|$ are depicted in Figure \ref{fig:level-curves}.  Since the fibration is $T^2$-equivariant and any curve on the plane is Lagrangian, it is a Lagrangian fibration by symplectic reduction.  Moreover it agrees with the original Lagrangian fibration $\pi$ on $U-V$.  Hence we can glue this with the original Lagrangian fibration on $X-V$, and obtain another Lagrangian fibration $\pi^\dagger: X \to B$.

By definition $\pi = \pi^\dagger$ away from $V$.  In the neighborhood defined by $\left(\frac{|z-(a+b)/2|}{R}\right)^2+\left(\frac{|u_1|^2- |v_1|^2}{L}\right)^2+\left(\frac{|u_2|^2- |v_2|^2}{L}\right)^2 < 1-2\epsilon$, the fibration is simply 
$$\left(\left|z-(a+b-c)\right|, \frac{1}{2} (|u_1|^2- |v_1|^2), \frac{1}{2} (|u_2|^2 - |v_2|^2)\right)$$
which is also conifold-like around $S$.  The image of $S$ under either $\pi$ and $\pi^\dagger$ is the interval $[b-c,a-c] \times \{0\} \times \{0\}$.  Since the fibration around $S$ is compatible with the symplectic reduction of the $T^2$-action on $(u_1,v_1,u_2,v_2)$, the second and third coordinates $(b_2,b_3)=\left(\frac{1}{2} (|u_1|^2- |v_1|^2), \frac{1}{2} (|u_2|^2 - |v_2|^2)\right)$ serve as the action coordinates of the base of the Lagrangian fibration.  Thus the image $\underline{S} \subset \{b_2=b_3=0\}$ is an affine submanifold.

Since $\pi = \pi^\dagger$ outside $V \subset U$ and every torus fiber of $\pi$ and $\pi^\dagger$ has non-empty intersection in $X - V$, the orientations can be canonically identified.  We have induced orientations on the fibers of $|z-c|$ and also fibers of $|\phi_{u_1,u_2,v_1,v_2}(z)-c|$.  The induced orientation on $\underline{S}$ from $\pi$ (or $\pi^\dagger$ resp.) is such that $\omega(u,v) > 0$ where $u$ is a tangent vector along the orientation of $\underline{S}$ and $v$ (or $v'$ resp.) is a tangent vector along the orientation of a fiber of $|z-c|$ (or a fiber of $|\phi_{u_1,u_2,v_1,v_2}(z)-c|$ resp.).  It follows that $v = -v'$ (up to scaling by a positive number) and hence the two induced orientations on $\underline{S}$ are opposite to each other.
\end{proof}

To distinguish from the usual notion of flop in complex geometry, we call $\pi^\dagger$ to be the \emph{A-flop} of the Lagrangian fibration $\pi$  (where `A' stands for the 'symplectic side' in mirror symmetry).  It is the mirror operation of Atiyah flop.  



In analogous to foliations, we identify two Lagrangian fibrations if they are related by diffeomorphisms as follows.

\begin{defn}
Two Lagrangian fibrations $\pi_1,\pi_2: X \to B$ are said to be equivalent if there exists a symplectomorphism $\Phi: X \overset{\cong}{\to} X$ and a diffeomorphism $\phi: B \overset{\cong}{\to} B$ such that $\phi \circ \pi_1 = \pi_2 \circ \Phi$.
\end{defn}

The following easily follows from construction.

\begin{prop}
If we make different choices of $(U,\iota)$ and the function $f$ in the proof of Theorem \ref{thm:fib}, the resulting Lagrangian fibrations are equivalent.
\end{prop}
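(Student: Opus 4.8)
The plan is to show that the A-flop fibration $\pi^\dagger$ depends on the auxiliary data only up to the equivalence of Lagrangian fibrations defined above, by deforming one set of choices to another through admissible choices and integrating the resulting deformation of the fibration into an ambient symplectomorphism. I would separate the argument into the dependence on the cut-off function $f$ and the dependence on the conifold-like chart $(U,\iota)$, treating $f$ first with the chart fixed.

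For the cut-off, the cleanest formulation is that $f$ enters only through the interpolating symplectomorphism $f^{(X,S)}:X\to X^\dagger$ of Section \ref{sec:flop-sympl}. By construction (see the proof of Theorem \ref{thm:fib}) one has $\pi^\dagger=\pi^\dagger_{X^\dagger}\circ f^{(X,S)}$, where $\pi^\dagger_{X^\dagger}$ is the fibration obtained by gluing $\pi|_{X-V}$ to the standard conifold-like fibration on $U^\dagger$ via the canonical symplectomorphism $\rho$; and $\rho$, hence $\pi^\dagger_{X^\dagger}$, does not involve $f$. Thus for two admissible cut-offs $f_0,f_1$ one sets $\Phi:=\big(f^{(X,S)}_{f_0}\big)^{-1}\circ f^{(X,S)}_{f_1}$, a symplectomorphism of $X$, and immediately gets $\pi^\dagger_{f_1}=\pi^\dagger_{f_0}\circ\Phi$, i.e.\ equivalence with base map $\phi=\mathrm{id}_B$. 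If one prefers to argue directly from \eqref{eq:phi}, the admissible cut-offs form a convex set, so $f_s=(1-s)f_0+sf_1$ gives a smooth family $\pi^\dagger_s$ of $T^2$-equivariant Lagrangian fibrations agreeing with $\pi$ on $X-V$; each descends, under symplectic reduction of the $T^2$-action, to a family of foliations by arcs of the reduced $z$-discs, and a relative Moser argument produces a symplectic isotopy $\Phi_s$ supported in $V$ carrying the fibers of $\pi^\dagger_0$ to those of $\pi^\dagger_s$.

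Next I would treat the chart. Since both charts present the \emph{same} $\pi$ in the conifold-like form of Definition \ref{def:con_Lag}, the transition $g=\iota_1\circ\iota_0^{-1}$ is, on a common smaller neighborhood of $S$, an automorphism of the standard double-conic model preserving the Lagrangian sphere $S$, the $T^2$-reduction, and the fibration $\big(|z-c|,\tfrac12(|u_1|^2-|v_1|^2),\tfrac12(|u_2|^2-|v_2|^2)\big)$. The group of such automorphisms is connected, being generated by the $T^2$-rotations and by fiberwise area-preserving reparametrizations of the reduced $z$-discs, all isotopic to the identity through maps of the same kind; so one may choose a path $g_t$ from $\mathrm{id}$ to $g$ within it. This yields a path of conifold-like charts $\iota_t=g_t\circ\iota_0$, hence (interpolating $f$ as above) a smooth family $\pi^\dagger_t$ of Lagrangian fibrations interpolating the two A-flops, all with the same discriminant stratum $\underline S$ up to the accompanying base diffeomorphism and all agreeing with $\pi$ outside a fixed neighborhood of $S$. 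A second relative Moser argument then integrates this family into a symplectomorphism $\Phi$ with $\phi\circ\pi^\dagger_0=\pi^\dagger_1\circ\Phi$ for a suitable base diffeomorphism $\phi$.

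The routine parts are the convexity and connectedness statements and the reduction to foliations of the $2$-dimensional reduced discs. The main obstacle is the relative Moser step: one must integrate the infinitesimal variation of the singular Lagrangian fibration into a genuine ambient symplectic isotopy carrying fibers exactly to fibers. Writing the reduced foliations as level sets of functions $p_s$ on the reduced surfaces, this amounts to solving the cohomological equation $\{p_s,H_s\}=-\dot p_s$ for Hamiltonians $H_s$, smoothly in the reduction parameters $(b_2,b_3)$ and uniformly up to the nodal fibers over $\underline S$ where the $T^2$-fibers degenerate. This is where care is required, but it is unobstructed: all choices produce identical data to high order along $\underline S$ and coincide with $\pi$ near the boundary of the chart, so the isotopy can be taken supported in the interior of $V$.
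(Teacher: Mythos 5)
The paper offers no argument for this proposition beyond the sentence ``the following easily follows from construction,'' so there is nothing detailed to compare against; your proposal is necessarily doing more work than the source. Your overall two-step strategy (cut-off first, chart second, each handled by interpolation plus a Moser-type argument in the $T^2$-reduced picture) is the natural one, and the cut-off half is essentially right: since $\rho$ is fixed independently of the cut-off and $\pi^\dagger$ is obtained by transporting the glued fibration on $X^\dagger$ back through $f^{(X,S)}$, two cut-offs change only the ambient symplectomorphism, and the difference is absorbed into $\Phi$ with $\phi=\mathrm{id}_B$. (If one instead works with the explicit formula $(|\phi_{u_1,u_2,v_1,v_2}(z)-c|,\ldots)$ from \eqref{eq:phi}, the naive fiber-matching map $z\mapsto(\phi^{(1)})^{-1}\circ\phi^{(0)}(z)$ is only a diffeomorphism, and upgrading it to a symplectomorphism generally forces a nontrivial base diffeomorphism $\phi$ matching leaves by enclosed area rather than by the value of $|\phi(z)-c|$; the definition of equivalence permits this, but your claim that $\phi=\mathrm{id}_B$ suffices is tied to the first formulation, not the second.)

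The genuine gap is in the chart-independence step. First, Definitions \ref{def:con-chart} and \ref{def:con_Lag} allow the two charts to have different parameters $a,b,c,R,L$, and the bottom arrow of the diagram in Definition \ref{def:con_Lag} is an unspecified diffeomorphism onto $I\times(-L,L)^2$; so the transition $g=\iota_1\circ\iota_0^{-1}$ is not an automorphism of a single fixed model preserving the standard fibration on the nose, but only intertwines two standard fibrations up to reparametrization of the base. A normalization step is needed before your connectedness claim even makes sense. Second, the connectedness of the resulting group (or groupoid) of fibered symplectic transition maps is asserted, not proved; this is plausible but is precisely the content one would need to establish. Third, the assertion that ``all choices produce identical data to high order along $\underline S$'' is unjustified: two unrelated conifold-like charts need only agree in the qualitative normal form, not to high order, and the relative Moser/cohomological step $\{p_s,H_s\}=-\dot p_s$ is exactly where the degeneration of the $T^2$-fibers over $\underline S$ and over the discriminant circles must be controlled. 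The correct way through is the one you gesture at — reduce by the $T^2$-action and solve the problem for families of singular foliations of $2$-discs by arcs, where matching of enclosed areas (action coordinates) is the only obstruction and it vanishes because both fibrations agree with $\pi$ near $\partial V$ — but as written this step is named rather than carried out.
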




\begin{figure}[htb!]
  \centering
   \begin{subfigure}[b]{0.45\textwidth}
   	\centering
    \includegraphics[scale=0.3]{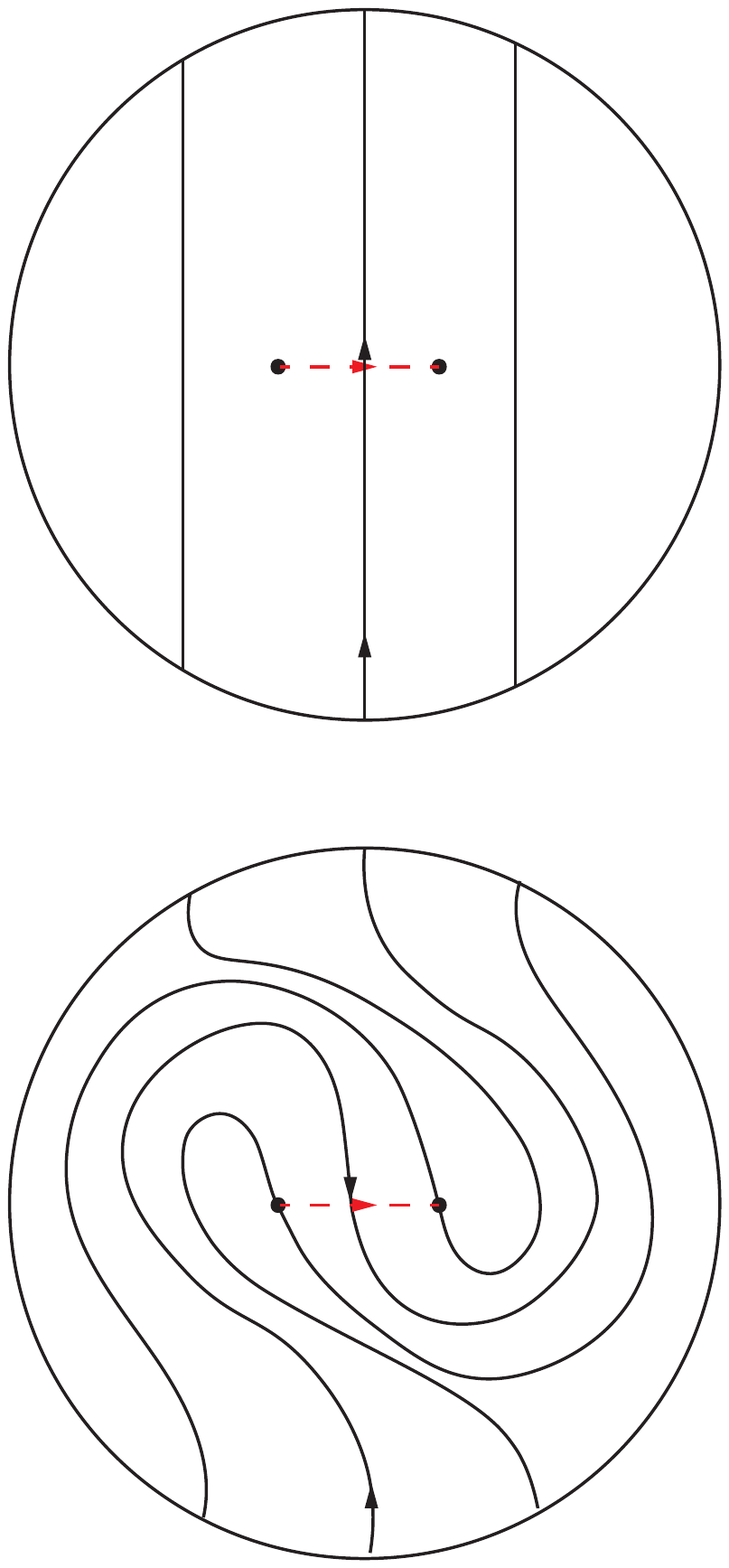}
    \caption{The symplectic reduction of Lagrangian torus fibers before and after the A-flop.}
    \label{fig:level-curves}
   \end{subfigure}
   \hspace{10pt}
   \begin{subfigure}[b]{0.45\textwidth}
   	\centering
    \includegraphics[width=\textwidth]{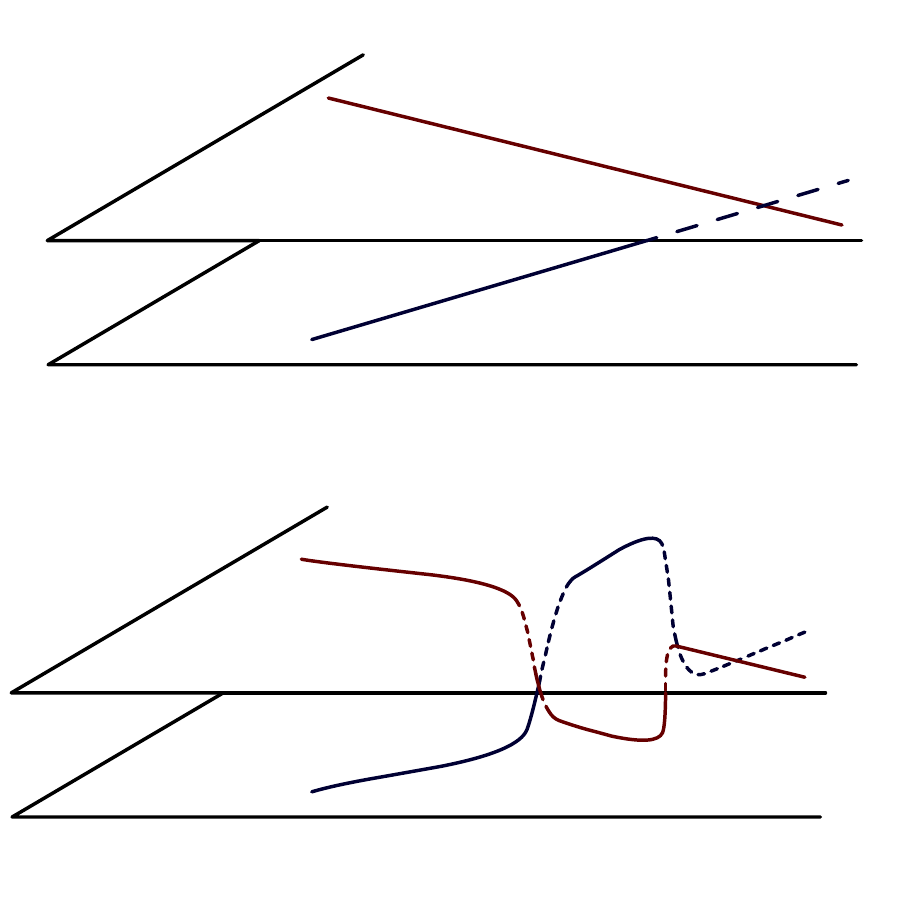}
    \caption{The discriminant locus before and after the A-flop.}
    \label{fig:disc-locus}
   \end{subfigure}
	\label{A-flop of a Lagrangian fibration.}
\end{figure}

\subsection{A-flop on Lagrangian submanifolds}\label{subsec:AflopLag}

We can also consider the effect of A-flop on Lagrangian submanifolds other than torus fibers.  We restrict to the following kind of Lagrangian submanifolds.

\begin{defn} \label{def:T^2-type}
Let $S$ be a Lagrangian sphere in $X$.  A Lagrangian submanifold $L \subset X$ is said to have $T^2$-symmetry around $S$ if there exists a conifold-like chart $(U,(u_1,v_1,u_2,v_2,z))$ around $S$ such that the image of $L \cap U$ under $z$ is a curve and the fiber at each point is given by $|u_i|^2 - |v_i|^2 = c_i(z)$ for some real-valued function $c_i$ on the curve, $i=1,2$.
\end{defn}

Given $L$ with $T^2$-symmetry with respect to a conifold-like chart $(U,(u_1,v_1,u_2,v_2,z))$ around $S$, we define $L^\dagger$ as follows.  Recall the diffeomorphism $\phi_{u_1,u_2,v_1,v_2}$ on $\{z \in \C: |z-(a+b)/2| < R\}$ in Equation \eqref{eq:phi}.  Write the image of $L \cap U$ under $z$ as a level curve $f(z)=0$ of a real-valued function $f$.  Then $L^\dagger$ is given as
$$\left\{(u_1,v_1,u_2,v_2,z) \in U:f(\phi_{u_1,u_2,v_1,v_2}(z))=0, |u_i|^2- |v_i|^2 = c_i(z) \textrm{ for } i=1,2\right\}$$ in $U$ and equals to $L$ outside $U$.  Since the image of $L^\dagger$ is a curve in the symplectic reduction by $T^2$, it is a Lagrangian submanifold with $T^2$-symmetry.  
Note that $L^\dagger$ and $L$ can be topologically different from each other.  

By construction we have
\begin{prop}
$(L^\dagger)^\dagger$ equals to the inverse of the Dehn twist applied to $L$.
\end{prop}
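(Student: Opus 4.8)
The plan is to run the argument in parallel with the proof of Proposition~\ref{prop:sympl-intro}, passing to the symplectic reduction by the $T^2$-action so that everything reduces to the behaviour of the $z$-plane rotation $\phi_{u_1,u_2,v_1,v_2}$ of Equation~\eqref{eq:phi}. First I would unwind the iterated construction. The map $\phi_{u_1,u_2,v_1,v_2}$ fixes $(u_1,v_1,u_2,v_2)$ and rotates $z$ about the center $(a+b)/2$ by the angle prescribed by the cutoff in \eqref{eq:phi}; since such a rotation preserves both $|z-(a+b)/2|$ and each $|u_i|^2-|v_i|^2$, the argument $\rho$ of that cutoff is itself $\phi$-invariant. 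Hence iterating simply doubles the angle, giving the full rotation $\phi^2_{u_1,u_2,v_1,v_2}(z)=e^{2\pi\consti f(\rho)}(z-(a+b)/2)+(a+b)/2$ on each level $|u_i|^2-|v_i|^2=\mathrm{const}$. Feeding the defining function of $L^\dagger$ back through Definition~\ref{def:T^2-type} a second time, and using the fiber constraint $|u_i|^2-|v_i|^2=c_i(z)$ to eliminate the $(u,v)$-dependence of $\phi$, I would conclude that $(L^\dagger)^\dagger$ is the $T^2$-symmetric Lagrangian whose $z$-image is the preimage of the original curve under this full rotation, with the same fiber functions $c_i$.

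Next I would compare this with $\tau_S^{-1}(L)$. By Proposition~\ref{prop:sympl-intro}, $\tau_S^{-1}=f^{(X^\dagger,S^\dagger)}\circ f^{(X,S)}$, and its proof exhibits this symplectomorphism as supported in $U$, equal to the $T^2$-element $(-1,-1)$ (equivalently $\psi^2$) on the inner region $V'$, and on $U-V'$ isotopic to the map $z\mapsto e^{2\pi\consti f}(z-(a+b)/2)+(a+b)/2$ together with the unique rescaling of $(u_i,v_i)$ needed to stay on the double conic \eqref{eqn:doubleconic}. Reducing by the $T^2$-action, whose invariants are $z$ together with $\mu_i:=\tfrac12(|u_i|^2-|v_i|^2)$ so that $U/T^2\cong\{(z,\mu_1,\mu_2)\}$, both $(L^\dagger)^\dagger$ and $\tau_S^{-1}(L)$ descend to curves whose $z$-components are wound about $(a+b)/2$ by \emph{exactly the same} full rotation $\phi^2$; this is the decisive point and is the same winding computation as in Proposition~\ref{prop:sympl-intro}. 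On the inner region the two even agree on the nose, since the $T^2$-element $(-1,-1)$ acts trivially on the $T^2$-invariant set $L$.

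The step I expect to be the main obstacle is matching the \emph{fiber data} in the transition region $V-V'$. The Dehn twist is forced to rescale $|u_i|^2-|v_i|^2$ by the factors $|z'-a|/|z-a|$ and $|z'-b|/|z-b|$ in order to remain on $\{u_1v_1=z-a,\,u_2v_2=z-b\}$, whereas the direct $\phi$-construction holds $|u_i|^2-|v_i|^2=c_i(z)$ fixed; there is in addition a mild dependence of the doubled rotation angle on $c_i$ (because the second rotation sees $c_i(\phi(z))$ rather than $c_i(z)$), which makes the two reduced $z$-curves differ pointwise. A short computation shows that no symplectomorphism of $U$ preserving $\mu$ can rotate $z$ nontrivially, so the equality cannot be literal; I would therefore prove it up to Hamiltonian isotopy, which is precisely the equivalence under which Lagrangians are identified as objects of $\Fuk(X)$ and which underlies the equivalence of Lagrangian fibrations introduced above. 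Concretely, since the two reduced curves coincide outside $V$ and carry the same $z$-winding, the $\mu$-shear and the angle discrepancy are compactly supported deformations in the $\mu$-directions, and I would interpolate them by an explicit Lagrangian isotopy rel the boundary of $U$, upgrading it to a Hamiltonian isotopy by a Moser argument exactly as in Section~\ref{sec:flop-sympl}. This yields $(L^\dagger)^\dagger=\tau_S^{-1}(L)$ as objects of $\Fuk(X)$, completing the proof.
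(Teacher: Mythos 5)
Your argument is correct and is essentially the expansion the paper intends: the paper's own justification is the single phrase ``by construction,'' resting on the fact that $L^\dagger$ is cut out using the very same rotation $\phi_{u_1,u_2,v_1,v_2}$ of Equation~\eqref{eq:phi} that builds the flop symplectomorphism, so that $\phi$-invariance of its own argument makes the iterate the full $e^{2\pi\consti f}$ rotation identified with $\tau_S^{-1}$ in the proof of Proposition~\ref{prop:sympl-intro}. Your extra care in the transition region (matching the fiber data only up to Hamiltonian isotopy, which is the right notion of equality here since $\tau_S$ itself is only defined up to isotopy) is a genuine improvement on the paper's terseness; the one inaccuracy is the parenthetical claim that no $\mu$-preserving symplectomorphism of $U$ can rotate $z$ nontrivially --- the flow of a $T^2$-invariant Hamiltonian depending only on $z$ does exactly that --- but nothing in your argument depends on this remark.
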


If we have a stability condition $(Z,\mathcal{S})$ on the Fukaya category generated by Lagrangians with $T^2$-symmetry around $S$, then A-flop should give another stability condition $(Z^\dagger,\mathcal{S}^\dagger)$ where $Z^\dagger(L^\dagger) = Z(L)$.  In the next two sections we will restrict to the deformed conifold and carry out this construction explicitly.

\subsection{Examples}
\subsubsection{Deformed conifold} \label{subsubsec:deflocintro}
Consider the deformed conifold $X=\{(u_1,v_1,u_2,v_2,z) \in \C^4 \times \C^\times: u_1 v_1 = z-a, u_2 v_2 = z-b\}$ where $a<b<0$.  (We have taken away the divisor $z=0$.)  We can take the flop of the Lagrangian fibration 
$$\pi=(|u_1|^2-|v_1|^2,|u_2|^2-|v_2|^2,|z|)$$ 
which is special with respect to the holomorphic volume form $\Omega=d\log z \wedge d u_1 \wedge d u_2$.  The base of the fibration is $\R^2\times\R_{>0}$.  The discriminant locus of the fibration is $\{0\} \times \R \times \{|a|\} \cup \R \times \{0\} \times \{|b|\}$.  After the flop, the discriminant locus becomes $\{(0,t,|\phi_{0,t,0,0}(a)|):t\in\R\} \cup \{(t,0,|\phi_{0,t,0,0}(b)|):t\in\R\}$ where $\phi$ is given in Equation \eqref{eq:phi}.  For $t \ll 1$, $\phi_{0,t,0,0}(a)=b$ and $\phi_{0,t,0,0}(b)=a$; for $t$ big enough, $\phi_{0,t,0,0}(a)=a$ and $\phi_{0,t,0,0}(b)=b$.  The base and discriminant locus are shown in Figure \ref{fig:disc-locus}.  The new fibration $\pi^\dagger$ is no longer special with respect to $\Omega$; however it is equivalent to the corresponding special Lagrangian fibration on $X^\dagger=\{(u_1,v_1,u_2,v_2,z) \in \C^4 \times \C^\times: u_1 v_1 = z-b, u_2 v_2 = z-a\}$ with respect to $\Omega^\dagger$ (defined by the same expression as $\Omega$).

We have a family of complex manifolds defined by 
\begin{align*}
u_1 v_1 =& z-\left(\frac{a+b}{2}+\frac{b-a}{2}e^{\pi \consti (1+s)}\right),\\
u_2 v_2 =& z-\left(\frac{a+b}{2}+\frac{b-a}{2}e^{\pi \consti s}\right)\\
\end{align*}
for $s\in [0,1]$ joining $X$ and $X^\dagger$.  They are depicted in Figure \ref{fig:flop_rot}.  Each member has a special Lagrangian fibration defined by the same formula as $\pi$ above.  Before (or after) the moment $s=\frac{1}{2}$, the Lagrangian fibrations are all equivalent.  Approaching the moment $s=\frac{1}{2}$, two singular Lagrangian fibers collide into one and the Lagrangian fibration changes.  Thus $s=\frac{1}{2}$ is the `wall'.  It can also been seen clearly from the base, see the top right of Figure \ref{fig:conifold}.  At the moment $s=\frac{1}{2}$, the two discriminant loci (which are two lines in different directions) collides.

\begin{figure}[h]
\begin{center}
\includegraphics[scale=0.6]{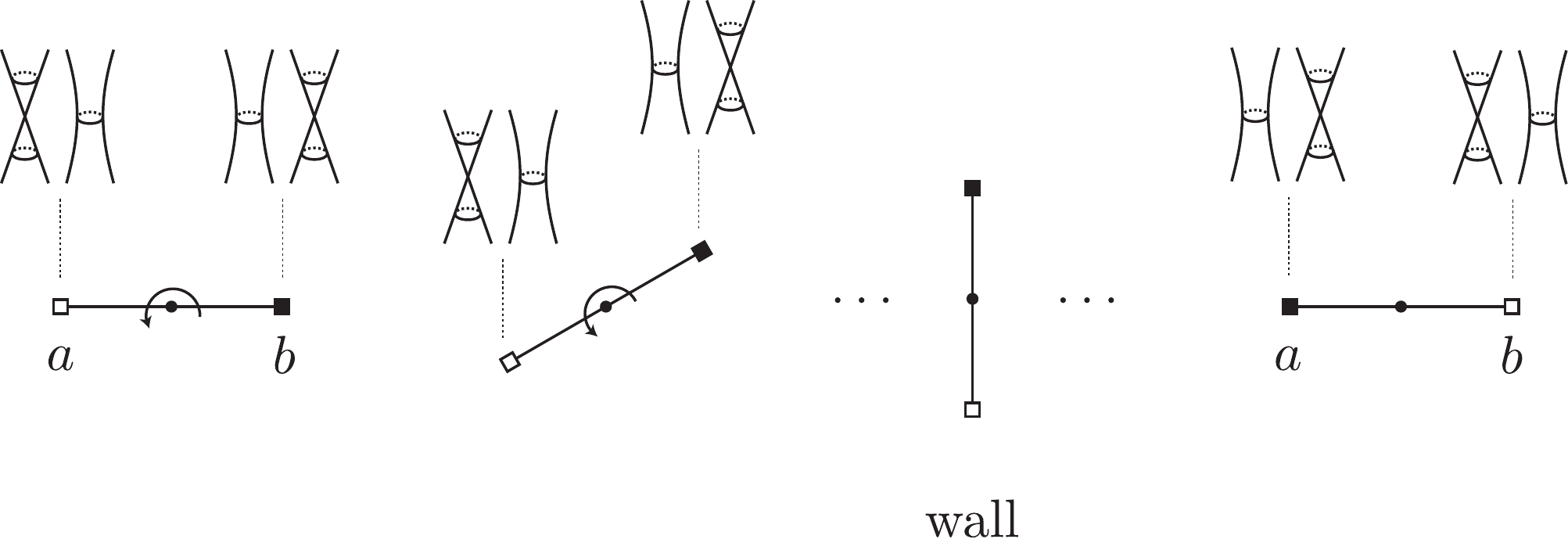}
\caption{A-flop shown in the double conic fibration picture}
\label{fig:flop_rot}
\end{center}
\end{figure}

The torus fibers of $\pi$ and $\pi^\dagger$ are different objects in the Fukaya category.  Namely consider a fiber $T=\{|z|=k,|u_1|=|v_1|,|u_2|=|v_2|\}$ of $\pi$ where $|b|<k<|a|$ and the corresponding fiber $T^\dagger=\{|\phi(z)|=k,|u_1|=|v_1|,|u_2|=|v_2|\}$ of $\pi^\dagger$.  We shall see in Section \ref{sec:locmod} that $T$, which is special Lagrangian with respect to $\Omega$, is a surgery $S_1 \# S_0$ for a morphism in $\mathrm{Mor}(S_1,S_0)$, while $T^\dagger$, which is special Lagrangian with respect to $\Omega^\dagger$, is a surgery $S_0 \# S_1$ for a morphism in $\mathrm{Mor}(S_0,S_1)$.  $S_0,S_1$ are Lagrangian spheres defined by $S_0 = \{z=-t, |u_1|=|v_1|,|u_2|=|v_2|: a \leq t \leq b\}$ and $S_1 = \{z=1+\exp (t \zeta_1 + (1-t) \zeta_0) \textrm{ for } t \in [0,1], |u_1|=|v_1|,|u_2|=|v_2|\}$ where $\zeta_0=\log |a|-\consti\pi$ and $\zeta_1=\log |b|+\consti\pi$.

\subsubsection{Deformed orbifolded conifold}
For $k \ge l \ge 1$, the orbifolded conifold $O_{k,l}$ is the quotient of the conifold $\{u_1v_1=u_2v_2\} \subset \C^4$ by the abelian group $\Z_k \times \Z_l$, where the primitive roots of unity $\zeta_k \in \Z_k$ and $\zeta_l \in \Z_l$ act by
$$
(u_1,v_1,u_2,v_2) \mapsto (\zeta_k u_1,\zeta_k^{-1} v_1,u_2,v_2), \textrm{ and } (x,y,z,w) \mapsto (u_1,v_1,\zeta_l u_2,\zeta_l^{-1} v_2).
$$
In equations
$$
O_{k,l}=\left\{u_1v_1=(z-1)^k, \ u_2v_2=(z-1)^l \right\} \subset \C^5. 
$$ 
It is a toric Gorenstein singularity whose fan is the cone over the rectangle $[0,k] \times [0,l] \subset \R^2$.  For the purpose of constructing a Lagrangian torus fibration with only codimension-two discriminant loci, we shall delete the divisor $\{z=0\} \subset O_{k,l}$ and obtain
$ X_0 = \left\{(u_1,u_2,v_1,v_2,z) \in \C^4 \times \C^\times: u_1v_1=(z-1)^k, \ u_2v_2=(z-1)^l \right\}.$

We shall consider smoothings of $X_0$, which correspond to the Minkowski decompositions of the rectangle $[0,k] \times [0,l]$ into $k$ copies of $[0,1] \times \{0\}$ and $l$ copies of $\{0\} \times [0,1]$ \cite{altmann}.  Explicitly a smoothing is given by
$$
X=\{(u_1,u_2,v_1,v_2,z) \in \C^4 \times \C^\times \ | \  u_1v_1=f(z), \ u_2v_2=g(z)\}$$
where $f(z)$ and $g(z)$ are polynomials of degree $k$ and $l$ respectively, such that the roots $r_i$ and $s_j$ of $f(z)$ and $g(z)$ respectively are pairwise-distinct and non-zero.  For later purpose we shall assume $|r_i|,|s_j|$ are all pairwise distinct.

$X$ admits a double conic fibration $X\rightarrow \C^\times$ by projecting to the $z$-coordinate.  There is also a natural Hamiltonian $T^2$-action on $X$ given by
$(s,t)\cdot (u_1,v_1,u_2,v_2,z):=(su_1,s^{-1}v_1,tu_2,t^{-1}v_2,z)$ for $(s,t) \in T^2 \subset \C^2$.  The symplectic reduction of $X$ by the $T^2$-action is identified with $\C^\times$, the base of the double conic fibration.  Using the construction of Goldstein \cite{Goldstein} and Gross \cite{Gross-eg}, we have the Lagrangian fibration 
\begin{align*}
\pi:X &\rightarrow B:= \R^2 \times (0,\infty)\\
\pi(u_1,v_1,u_2,v_2,z)&=\left(\frac{1}{2}(|u_1|^2-|v_1|^2),\frac{1}{2}(|u_2|^2-|v_2|^2),|z|\right).
\end{align*}
The map to the first two coordinates is the moment map of the Hamiltonian $T^2$-action. 
We denote the coordinates of $B$ by $b = (b_1,b_2,b_3)$.  The discriminant locus is given by the disjoint union of lines
$$\left(\bigcup_{i=1}^k \{b_1=0,b_3=|r_i|\}\right) \cup \left(\bigcup_{j=1}^l \{b_2=0,b_3=|s_j|\}\right) \subset B,$$ 
and the fibers are special Lagrangians in the same phase $\pi/2$ with respect to the volume form $\Omega:=du_1 \wedge du_2 \wedge d\log z$ (\cite[Proposition 3.17]{KL}).

Now let $a=r_1$ and $b=s_1$.  Assume that $|a|\not=|b|$; zero and all other roots $r_i, s_j$ lie outside the disc $|z-(a+b)/2|$.  Let $S_0$ be the Lagrangian matching sphere corresponding to the straight line segment joining $a$ and $b$.  Then the above Lagrangian fibration is conifold-like around $S$.  The flop of this is equivalent to the corresponding Lagrangian fibration on $X^\dagger=\{u_1v_1=f^\dagger(z),u_2v_2=g^\dagger(z)\}$, where $f^\dagger$ and $g^\dagger$ are polynomials with sets of roots $\{s_1,r_2,\ldots,r_k\}$ and $\{r_1,s_2,\ldots,s_l\}$ respectively; and the Lagrangian fibration is 
$$\pi^\dagger(u_1,v_1,u_2,v_2,z)=\left(\frac{1}{2}(|u_1|^2-|v_1|^2),\frac{1}{2}(|u_2|^2-|v_2|^2),|z|\right): X^\dagger \to B.$$  
The Lagrangian fibration $\pi^\dagger$ is no longer special with respect to $\Omega$ on $X$; however it is (equivalent to) a special Lagrangian fibration with respect to $\Omega^\dagger = du_1 \wedge du_2 \wedge d\log z$ on $X^\dagger$.

\subsubsection{Shoen's Calabi-Yau}
Given a compact simple integral affine threefold $B$ with singularities $\Delta$, Casta\~no-Bernard and Matessi \cite{CM1} constructed a symplectic manifold $X$ together with a Lagrangian fibration $X \to B$ inducing the given affine structure.  It is achieved by gluing local models of Lagrangian fibrations around $\Delta$ with the Lagrangian fibration over the affine manifold $B-\Delta$.   In particular their construction can be applied to Shoen's Calabi-Yau \cite{CM2}.  The Lagrangian fibration is conifold-like, and so the mirror flop defined here can be applied.

Shoen's Calabi-Yau is given by the fiber product of two elliptic fibrations on $K3$ surfaces over the base $\bP^1$.  The affine base manifold (which is topologically $\bS^3$) of Shoen's Calabi-Yau was found by Gross \cite[Section 4]{Gross-BB}.  Section 9.2 of \cite{CM2} constructed a conifold degeneration of the affine base forming nine conifold points simultaneously.  

We quickly review their construction here.  Consider the following polyhedral decomposition of $\bS^3$.  Take six copies of triangular prisms 
$$\mathrm{Conv} \{(0,0,0),(0,1,0),(1,0,0),(0,0,1),(0,1,1),(1,0,1)\},$$
three of them are labeled as $\sigma_j$ and three of them are labeled as $\tau_k$ for $j,k \in \Z_3$.  Take nine copies of cubes 
$$\mathrm{Conv} \{(0,0,0),(0,1,0),(1,0,0),(1,1,0),(0,0,1),(1,0,1),(0,1,1),(1,1,1)\}$$ 
and label them as $\omega_{jk}$.  See Figure \ref{fig:ShoenCY-polytope}.  The top triangular face of $\sigma_j$ is glued to the bottom triangular face of $\sigma_{j+1}$ ($j \in \Z_3$), and so topologically $\bigcup_{j=1}^3 \sigma_j$ forms a solid torus.  Similarly do the same thing for $\tau_k$ so that $\bigcup_{k=1}^3 \tau_k$ forms another solid torus.  For the nine cubes, glue the top face of $\omega_{jk}$ with the bottom face of $\omega_{j+1,k}$ for $j \in \Z_3$, and glue the right face of $\omega_{jk}$ with the left face of $\omega_{j,k+1}$ for $k\in\Z_3$.  This topologically forms a two-torus times an interval.  Finally glue the front face of $\omega_{jk}$ with the $j$-th square face of $\tau_k$, and glue the back face of $\omega_{jk}$ with the $k$-th square face of $\sigma_j$.  Here the square faces of $\sigma_j$ and $\tau_k$ are ordered counterclockwisely.  This forms $\bS^3$ as gluing of two solid tori along their boundaries.

\begin{figure}[h]
\begin{center}
\includegraphics[scale=0.5]{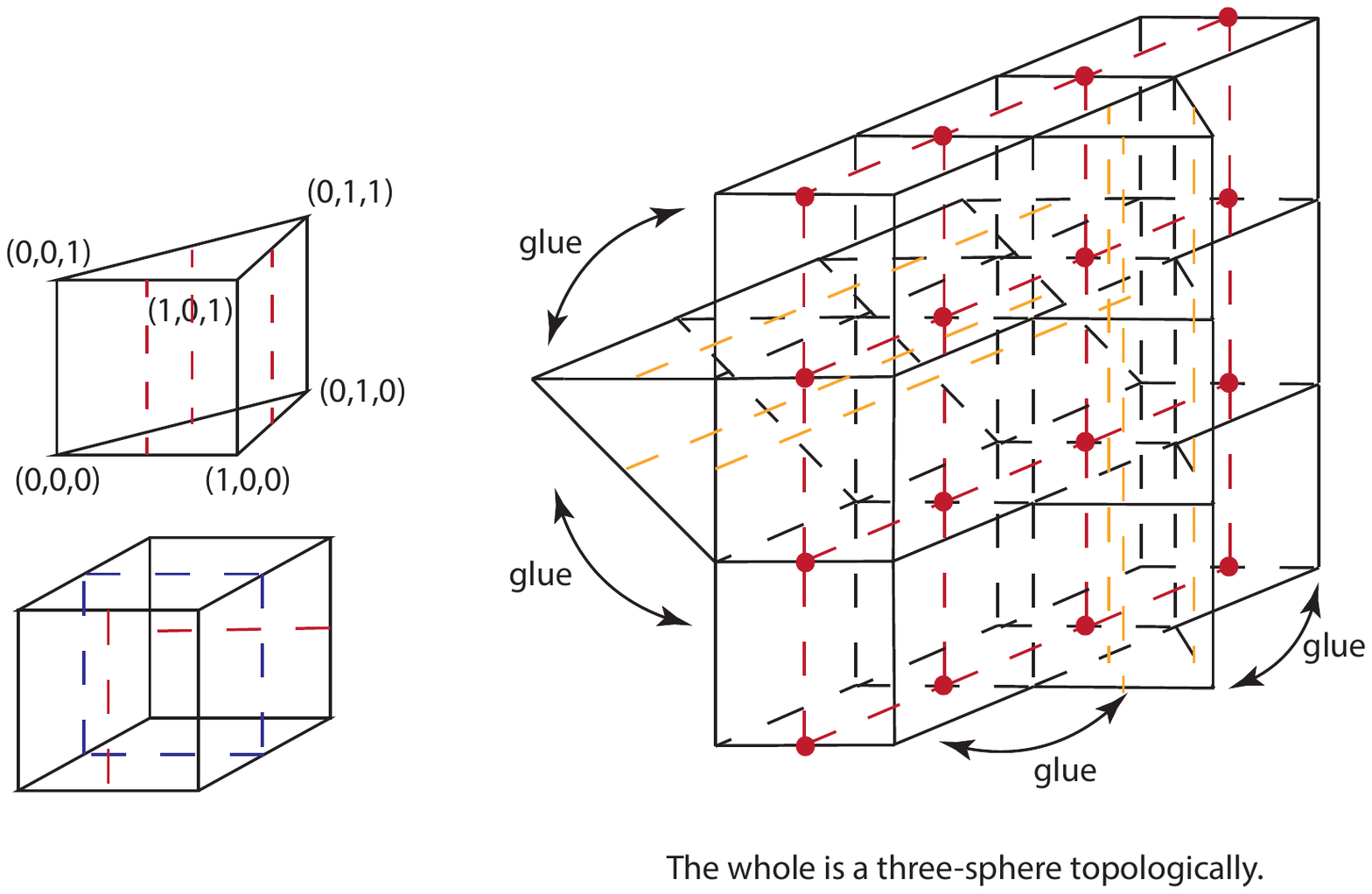}
\caption{Polytopes in the polyhedral decomposition of the affine base of Shoen's CY.}
\label{fig:ShoenCY-polytope}
\end{center}
\end{figure}

The fan structure at every vertex of the polyhedral decomposition is that of $\bP^2 \times \bP^1$.  Together with the standard affine structure of each polytope, this gives $\bS^3$ an affine structure with singularities.  The discriminant locus is given by the dotted lines shown in Figure \ref{fig:ShoenCY-polytope}.  Note that each dotted line in a square face of a prism indeed has multiplicity three.  Thus the discriminant locus is a union of 24 circles counted with multiplicities.  Moreover the dotted lines in cubes form three horizontal and three vertical circles, intersecting with each other at nine points.  These are the nine conifold singularities (which are positive nodes).  

By gluing local models of Lagrangian fibrations around discriminant locus with the Lagrangian fibration from the affine structure away from discriminant locus, \cite{CM2} produced a symplectic manifold which is homeomorphic to the Shoen's Calabi-Yau.  Moreover by using the results on symplectic resolution of Smith-Thomas-Yau \cite{STY} and complex smoothing of Friedman \cite{Friedman} and Tian \cite{Tian}, they showed that the existence of certain tropical two-cycles containing a set of conifold points ensure that the nodes can be simultaneously resolved (and smoothened in the mirror side).  In particular all the nine nodes in this example can be resolved simultaneously.  

In the smoothing the three horizontal and three vertical circles which form part of the discriminant locus are moved apart so that they no longer intersect with each other.  This gives a symplectic manifold $X$ together with a Lagrangian fibration.  The corresponding affine base coincides with the one in the previous work of Gross \cite[Section 4]{Gross-BB}.

The local model for each conifold point in this example is the Lagrangian fibration $(|u_1|^2-|v_1|^2,|u_2|^2-|v_2|^2,|z|)$ on $\{(u_1,v_1,u_2,v_2,z) \in \C^4 \times \C^\times: u_1 v_1 = z-a = u_2 v_2\}$ for $a < 0$; the local model for its smoothing is the fibration defined by the same expression on $\{(u_1,v_1,u_2,v_2,z) \in \C^4 \times \C^\times: u_1 v_1 = z-a, u_2 v_2 = z-b\}$ for $a < b < 0$.  A Lagrangian fibration corresponding to the simultaneous smoothing can be constructed by gluing these local models.  In particular $X$ and the fibration are conifold-like around each of the vanishing spheres corresponding to the nine conifold points.  Hence we can perform A-flop around each of these spheres and obtain new Lagrangian fibrations.  The operation can be understood as link surgery in the base $\bS^3$.

Note that we cannot always keep the circles $A_i,B_j$ in constant levels in the A-flop.  For instance, suppose $A_i$ and $B_j$ are contained in the planes in levels $a_i,b_j$ respectively with $a_1<a_2<a_3<b_1<b_2<b_3$.  (These planes have normal vectors pointing to the right if drawn in Figure \ref{fig:ShoenCY-polytope}.)  Now we perform the A-flop along the vanishing sphere between levels $a_1$ and $b_1$.  The resulting fibration is equivalent to the one with these circles in levels $a_2<a_3<b_1<a_1'<b_2<b_3$ where $a_1'$ is the new level of $A_1$.  At this stage all these circles are still kept in constant levels.  Now let's do the A-flop along the vanishing sphere between levels $a_2$ and $b_3$.  Then the resulting fibration cannot have all these circles in constant levels: if they were in constant levels, then $a_2<b_1<a_1'<b_3<a_2$, a contradiction!


\section{Derived Fukaya category of the deformed conifold}\label{sec:locmod}


In Example \ref{subsubsec:deflocintro}, we consider a path of complex structures on the deformed conifold (with a fixed symplectic form) given by the equations
\begin{equation}\label{eqn:xs01}
X_s = \left\{u_1 v_1 =z-\left(\frac{a+b}{2}+\frac{b-a}{2}e^{\pi \consti (1+s)}\right), u_2 v_2 = z-\left(\frac{a+b}{2}+\frac{b-a}{2}e^{\pi \consti s}\right), z\not=0 \right\}
\end{equation}
for $s \in [0,1]$. ($X_{s=0}$ and $X_{s=1}$ were denoted as $X$ and $X^\dagger$ in \ref{subsubsec:deflocintro}, respectively.)  This deformation of complex structures parametrized by $s$ is SYZ mirror to the flop operation on the resolved conifold.  In the last section we realized this operation as surgery of a Lagrangian fibration.  

In this section, we study the effect of deformation of complex structures (together with holomorphic volume forms) on special Lagrangians. 
This would motivate us to consider A-flop on stability conditions of the derived Fukaya category.


Recall from Section \ref{sec:revsyz} that we have two Lagrangian spheres $S_0$ and $S_1$ in $X_{s=0}$.
Moreover, there is a sequence of Lagrangian spheres $\{S_n : n \in \Z\}$ in $X_{s=0}$ which corresponds to a collection of non-trivial matching paths in the base of the double conic fibration $X_{s=0} \to \C^\times$.
 We depict these spheres in the universal cover of $\C^\times (\ni \!\! z)$ as shown in Figure \ref{fig:spheresss}. 

\begin{defn}
$\mathcal{F}$ is defined to be the full subcategory of $\Fuk(X_{s=0})$ generated by $S_0$ and $S_1$. 
\end{defn}

The main purpose of this section is to prove the following.

\begin{thm}\label{thm:main5cf}
Regular Lagrangian torus fibers of $\pi$ which have non-empty intersection with $S_0$, as well as the Lagrangian spheres $S_i$, are contained in $\mathcal{F}$.
\end{thm}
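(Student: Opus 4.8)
The plan is to reduce everything to the combinatorics of curves in the punctured base of the double conic fibration $X_{s=0} \to \C^\times$, $(u_1,v_1,u_2,v_2,z) \mapsto z$, and then to build the desired objects out of $S_0$ and $S_1$ using the two standard exact triangles of Floer theory: the Lagrangian-surgery triangle and Seidel's Dehn-twist triangle. Since every Lagrangian in sight ($S_0$, $S_1$, all the $S_n$, and the torus fibers $T$) is invariant under the Hamiltonian $T^2$-action $(\lambda_1,\lambda_2)\cdot(u_1,v_1,u_2,v_2)=(\lambda_1 u_1,\lambda_1^{-1}v_1,\lambda_2 u_2,\lambda_2^{-1}v_2)$, each is recovered from its image curve in $\C^\times$ together with the collapsing data of the two vanishing circles $\{|u_i|=|v_i|\}$; holomorphic strips and the surgery/twist constructions can therefore be analyzed downstairs in $\C^\times$. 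Throughout I use that $\mathcal{F}$, being the subcategory generated by $S_0$ and $S_1$, is triangulated and closed under taking cones of morphisms between its objects, so it suffices to exhibit each object as an iterated cone on $S_0$ and $S_1$.

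First I would treat a regular torus fiber meeting $S_0$. Because $S_0 \subset \{|u_1|=|v_1|,\,|u_2|=|v_2|\}$ projects to the segment joining $a$ and $b$, a regular fiber meets $S_0$ exactly when it is of the form $T=\{|z|=k,\ |u_1|=|v_1|,\ |u_2|=|v_2|\}$ with $|b|<k<|a|$, whose image in $\C^\times$ is the circle of radius $k$. This circle is obtained by smoothing, at one of the two common endpoints of the matching paths, the union of the path defining $S_0$ with the path defining $S_1$ (the arc joining $a$ and $b$ that winds once around $0$); concretely $T$ is the Lagrangian surgery $S_1 \# S_0$ at the intersection corresponding to a degree-one element of $\mathrm{Mor}(S_1,S_0)$, as asserted in Section \ref{subsubsec:deflocintro}. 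I would then invoke the Lagrangian-surgery exact triangle, which identifies the surgery with the mapping cone of the corresponding morphism,
\[ S_1 \longrightarrow S_0 \longrightarrow T \stackrel{[1]}{\longrightarrow}, \]
so that $T \cong \Cone(S_1 \to S_0)$ lies in $\mathcal{F}$. The $T^2$-reduction is what makes the relevant moduli of strips explicit and confirms the identification of the surgery with the cone.

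For the spheres, I would show that consecutive members of the family are related by the Dehn twist along the intermediate one, $S_{n+1}=\tau_{S_n}(S_{n-1})$, which is the standard effect of the half-twist on matching paths as they wind around the puncture $z=0$ (the monodromy of $\C^\times$ about the origin). Seidel's exact triangle $\tau_{S}(L) \cong \Cone\big(HF^*(S,L)\otimes S \to L\big)$ then exhibits $S_{n+1}$ as a cone on $S_n$ and $S_{n-1}$; since $S_0, S_1 \in \mathcal{F}$ by definition, an induction (using the inverse twist $S_{n-1}=\tau_{S_n}^{-1}(S_{n+1})$ for the negative direction) places every $S_n$ in $\mathcal{F}$. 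The main obstacle, in both parts, is the careful geometric identification of $T$ and of $S_{n+1}$ as the claimed surgery and twist, together with the verification of the hypotheses of the two exact triangles in this open, punctured setting: one must pin down the correct intersection locus and its grading, confirm that the morphism used is a genuine degree-one Floer cocycle, and rule out unexpected holomorphic discs or strips — including the Maslov-index-zero discs bounded by the torus fibers in the wall — that could break the quasi-isomorphism to the mapping cone. I expect this disc and strip analysis, made tractable by descending to $\C^\times$ via the $T^2$-symmetry, to be the crux of the argument.
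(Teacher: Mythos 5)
Your treatment of the torus fibers is essentially the paper's: Proposition \ref{prop:teqcone} proves $(L_c,U_{\rho_z})\cong \Cone(\bL_0\xrightarrow{\alpha}\bL_1)$ for a degree-one $\alpha$, though the paper does not invoke a general surgery exact triangle --- which is not off-the-shelf here, since $S_0$ and $S_1$ meet cleanly along two circles and the smoothing happens at both simultaneously, weighted by the holonomy $\rho_z$ and the areas $\omega(\Delta_1),\omega(\Delta_2)$ --- but instead writes down explicit cocycles $P_0,P_1$ between $L_c$ and the twisted complex $(\bL_0\oplus\bL_1,\alpha)$ and checks, using only the two triangles visible in the $z$-plane after $T^2$-reduction, that their products are multiples of the units. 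Note also that your triangle $S_1\to S_0\to T\xrightarrow{[1]}$ has the wrong shape: it would force $[T]=[S_0]-[S_1]$ in the $K$-group, whereas $[T]=[S_0]+[S_1]$ (mirror to $[\cO_y]$); the correct triangle is $\bL_1\to (L_c,U_{\rho_z})\to\bL_0\xrightarrow{[1]}$.

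The genuine gap is in your treatment of the spheres. The recursion $S_{n+1}=\tau_{S_n}(S_{n-1})$ is false, and no scheme based on Dehn twists along objects of $\mathcal{F}$ can work. Indeed $HF^*(S_i,S_j)$ has vanishing Euler characteristic for $i,j\in\{0,1\}$ (it is $H^*(\sphere{3})$ or $H^*(S^1_a)[-1]\oplus H^*(S^1_b)[-1]$), so the Euler pairing vanishes identically on $K(D^b\mathcal{F})\cong\Z^2$; by Seidel's exact triangle every Dehn twist along a sphere in $\mathcal{F}$ then acts as the identity on $K(D^b\mathcal{F})$, and the orbit of $\{S_0,S_1\}$ under such twists consists only of objects with class $[S_0]$ or $[S_1]$. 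But $[S_2]=[S_0]+2[S_1]$ (mirror to $[\cO_C(1)]$; equivalently $\Psi(S_2)=V_+(2)$ has dimension vector $(1,2)$), and $[S_m]$ grows linearly in $m$. The correct recursion, which is the content of Proposition \ref{prop:smsmt}, must pass through the torus fiber: $S_{m+1}\cong\Cone(L_c\to S_m)$ for a degree-one morphism (and dually for $n\le 0$), adding $(1,1)$ to the class at each step; it is proved by the same two-triangle count that gives $L_c\cong\Cone(S_m[1]\to S_{m+1})$, followed by a rotation of the triangle. So the spheres are generated from $S_0,S_1$ only after the torus fibers have been placed in $\mathcal{F}$ --- the logical order of your two steps must be reversed, and the second step rebuilt around $L_c$ rather than around Dehn twists.
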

The theorem follows from Proposition \ref{prop:teqcone} and \ref{prop:smsmt} below.

The torus fibers and spheres $S_i$ are special Lagrangians with respect to the holomorphic volume form
$$ \Omega := d \log \bar{z} \wedge d u_1 \wedge d u_2$$
on $X_{s=0}$. Here we used $d \log \bar{z}$ instead of $d \log z$ to match the ordering of phases on both sides of the mirror\footnote{In order to match the phase inequality in the mirror side, we can either impose the mirror functor to be contravariant, or use the complex structure induced by the conjugate volume form $d \log \bar{z} \wedge d u_1 \wedge d u_2$ like here.  All $S_i$ are still special Lagrangians under this volume form, and we have the phase inequalities $\theta(S_i) > \theta(S_j)$ for $0< i <j$ or $i<j<1$.  This matches the ordering of the phases of stable objects in an exact triangle of the mirror B-side convention.  Namely for an exact triangle $L_1 \to L_1 \# L_2 \to L_2 \stackrel{[1]}{\to}$ where $L_i$ are special Lagrangians, their phases should satisfy $\theta(L_1) \leq \theta(L_1 \# L_2) \leq \theta(L_2)$.}. In particular, we measure the angle in clockwise direction for phases of $S_i$. The diagram in the right side of Figure \ref{fig:spheresss} compares the phases of $S_i$'s, where $S_0$ has the biggest phase in our convention.
In Section \ref{sec:fordefnc} we will see that taking these to be stable objects defines a Bridgeland stability condition on the derived Fukaya category.

Moreover each $S_i$ corresponds to another Lagrangian sphere $S_i^\dagger$ in $\mathcal{F}$, the flop of $S_i$ constructed in Section \ref{subsec:AflopLag}.  The Lagrangian torus fibers of $\pi^\dagger$ and $S_i^\dagger$ are special with respect to the pull-back holomorphic volume form from $X_{s=1}$, and they define another Bridgeland stability condition. In fact, we have $S_i^\dagger = \rho^{-1}(S'_{-i})$ where $\{ S'_{n} : n \in \Z \}$ is the set of new special Lagrangian spheres in $X_{s=1}$ which map to straight line segments by $z$-projection as in Figure \ref{fig:sprimes}.

For later use we orient these spheres as follows.  In conic fiber direction, each $S_i$ restricts to a 2-dimensional torus $\{|u_1| = |v_1|, |u_2| = |v_2|\}$.  We fix the orientation on the fiber torus to be $d \theta_1 \wedge d \theta_2$ where $\theta_i$ are the arguments of $u_i$ respectively.  We orient their matching paths as in the right side of Figure \ref{fig:spheresss}.

\begin{figure}[h]
\begin{center}
\includegraphics[height=1.8in]{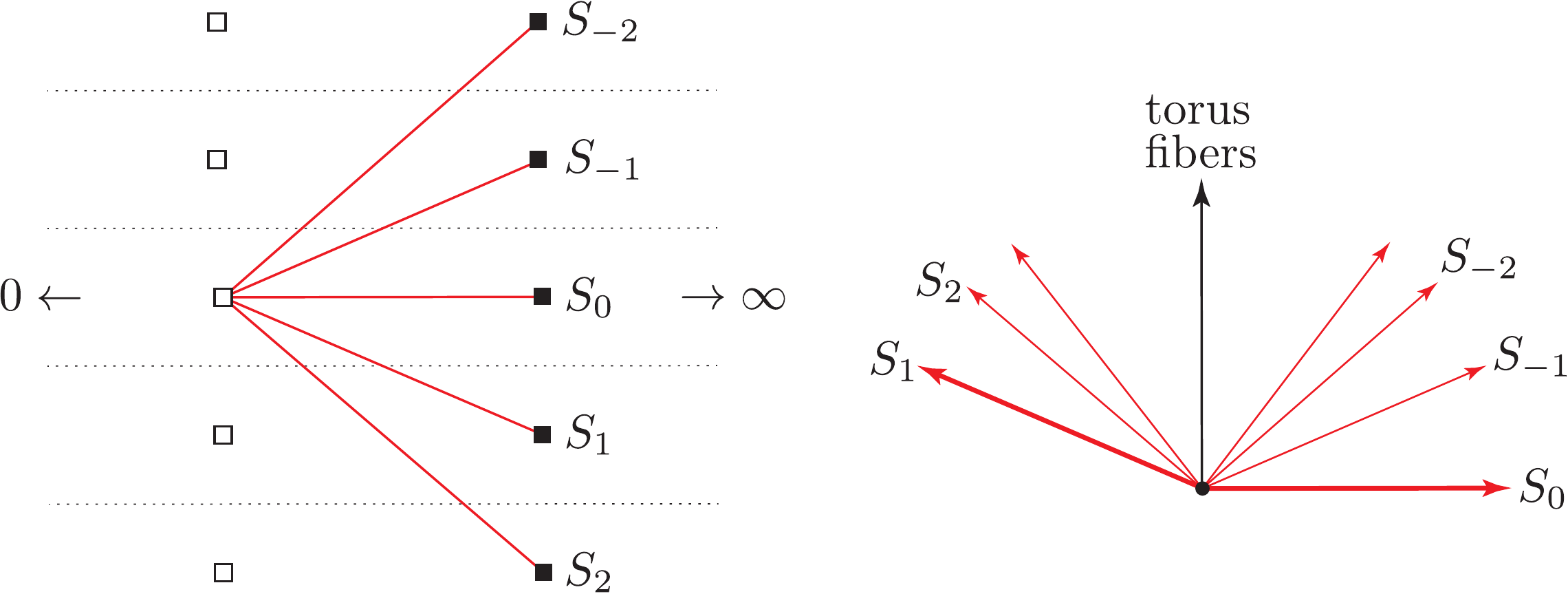}
\caption{Sequence of Lagrangian spheres in $X_{s=0}$}\label{fig:spheresss}
\end{center}
\end{figure}
 
Set $\bL_0:=S_0$ and $\bL_1:=S_1$. They are distinguished objects in the set $\{S_n\}$ of Lagrangian spheres in the sense that they have minimal/maximal slopes (or phases) as well as they generate $D^b \mathcal{F}$. We will study Lagrangian Floer theory of $\bL_0$ and $\bL_1$ intensely in Section \ref{subsec:FCbL}.

%

%
Recall from Section \ref{sec:bsiderev} that $\mathcal{D}_{\hat{Y}/Y}$ is the subcategory of $\mathcal{D}^b (\hat{Y})$ generated by $\mathcal{O}_{C} (-1) [1]$ and $\mathcal{O}_{C}$. \cite{CPU} proved the following equivalence of subcategories of $\Fuk (X_{s=0})$ and $\mathcal{D}^b (\hat{Y})$.

 \begin{thm}\label{thm:cpumod}\cite{CPU}
 There is an equivalence $D^b \mathcal{F} \simeq \mathcal{D}_{\hat{Y}/Y}$,
 sending
 \begin{equation}\label{eqn:equivSi}
 \bL_0 \mapsto \mathcal{O}_C (-1)[1]  \qquad \bL_1 \mapsto \mathcal{O}_C .
 \end{equation}
 \end{thm}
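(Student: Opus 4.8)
The plan is to deduce the equivalence from the homological mirror symmetry of Chan--Pomerleano--Ueda (Theorem~\ref{thm:citemaincpu}) by restricting it to the appropriate generating subcategories. Write $\Phi$ for the CPU equivalence between the derived wrapped Fukaya category of $\tilde{Y}_0 = X_{s=0}$ and $D^b\mathrm{Coh}(\hat{Y}_0)$. Since $\bL_0 = S_0$ and $\bL_1 = S_1$ are \emph{closed} Lagrangians disjoint from the deleted divisor, I would first argue that their images under $\Phi$ are objects proper over $Y$ whose support is the compact exceptional locus $C$, i.e. they lie in $\mathcal{D}_{\hat{Y}/Y}$ as defined in \eqref{eq:D/Y}. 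This rests on the general principle that the subcategory split-generated by compact Lagrangians matches the compactly (here $C$-) supported subcategory on the mirror. Thus $\Phi$ restricts to a functor $D^b\mathcal{F} \to \mathcal{D}_{\hat{Y}/Y}$, and it remains to identify $\Phi(\bL_0), \Phi(\bL_1)$ precisely and to verify the restriction is an equivalence.

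Next I would pin down the two objects. As Lagrangian three-spheres, $\bL_0, \bL_1 \cong S^3$ are spherical objects with $HF^*(\bL_i,\bL_i) \cong H^*(S^3) = \C \oplus \C[-3]$. On the mirror the spherical objects of $\mathcal{D}_{\hat{Y}/Y}$ are, up to shift and the spherical twist action, governed by $\mathcal{O}_C(-1)$: using the local-to-global spectral sequence $E_2^{p,q} = H^p(C, \mathcal{H}om(F,G) \otimes \wedge^q N_{C/\hat{Y}}) \Rightarrow \mathrm{Ext}^{p+q}(F,G)$ on $C \cong \bP^1$ with $N_{C/\hat{Y}} = \mathcal{O}(-1)^{\oplus 2}$, one checks $\mathrm{Ext}^*(\mathcal{O}_C(-1),\mathcal{O}_C(-1)) \cong \mathrm{Ext}^*(\mathcal{O}_C,\mathcal{O}_C) \cong \C \oplus \C[-3]$, so both $\mathcal{O}_C(-1)[1]$ and $\mathcal{O}_C$ are spherical. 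The same sequence gives the off-diagonal groups $\mathrm{Ext}^*(\mathcal{O}_C(-1)[1],\mathcal{O}_C) \cong \C^2[-1] \oplus \C^2[-2]$ and, by Serre duality, $\mathrm{Ext}^*(\mathcal{O}_C,\mathcal{O}_C(-1)[1]) \cong \C^2[-1] \oplus \C^2[-2]$, reproducing the two arrows in each direction and their dual relations of the Klebanov--Witten conifold quiver. The shift $\mathcal{O}_C(-1) \mapsto \mathcal{O}_C(-1)[1]$ is exactly what places all off-diagonal morphisms into degrees $1$ and $2$. On the A-side the corresponding Floer groups $HF^*(\bL_0,\bL_1)$, $HF^*(\bL_1,\bL_0)$ are read off from the matching-path data of Figure~\ref{fig:spheresss}, and comparing gradings and dimensions together with the clockwise phase ordering $\theta(\bL_0) > \theta(\bL_1)$ forces the assignment \eqref{eqn:equivSi}.

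Finally I would upgrade the object- and morphism-level match to a triangulated equivalence. Since $\{\bL_0,\bL_1\}$ generate $D^b\mathcal{F}$ and $\{\mathcal{O}_C(-1)[1], \mathcal{O}_C\}$ generate $\mathcal{D}_{\hat{Y}/Y}$, it suffices to show the $\AI$ endomorphism algebras $\mathrm{End}_{\mathcal{F}}(\bL_0 \oplus \bL_1)$ and $\mathrm{End}(\mathcal{O}_C(-1)[1] \oplus \mathcal{O}_C)$ are quasi-isomorphic; passing to twisted complexes then yields $D^b\mathcal{F} \simeq \mathcal{D}_{\hat{Y}/Y}$ with \eqref{eqn:equivSi}. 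The B-side algebra is the (completed) Ginzburg--Jacobi algebra of the conifold quiver with its quartic superpotential, so the problem reduces to showing that the higher Floer products on $\bL_0 \oplus \bL_1$ realize precisely this superpotential. I expect this to be the main obstacle: generation and the matching of $\mathrm{Hom}$-spaces above are essentially formal, but recovering the full $\AI$-structure requires the holomorphic-disc counts that encode the relations, which is the technical heart of the CPU argument and of the mirror-functor computation carried out in Section~\ref{sec:fordefnc}. Once the two $\AI$-algebras are identified, the restriction of $\Phi$ is the desired equivalence.
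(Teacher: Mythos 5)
Your outline matches what the paper actually does here: Theorem~\ref{thm:cpumod} is quoted from \cite{CPU}, and the paper's only justification is exactly your final step — that $\{\bL_0,\bL_1\}$ and $\{\mathcal{O}_C(-1)[1],\mathcal{O}_C\}$ generate the two sides and that \cite{CPU} explicitly computed $\End(\bL_0\oplus\bL_1)\simeq\End(\mathcal{O}_C(-1)[1]\oplus\mathcal{O}_C)$ as $A_\infty$-algebras (via Abouzaid's chain model), which is the quasi-isomorphism you correctly identify as the technical heart and defer to \cite{CPU} and Section~\ref{sec:fordefnc}. The one soft spot is your claim that Ext-dimensions plus phase ordering \emph{force} the assignment \eqref{eqn:equivSi} — the off-diagonal Ext groups are symmetric under Serre duality, so the labeling really comes from tracking the explicit SYZ/CPU functor rather than from numerics — but this does not affect the overall argument.
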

Using the chain model of Abouzaid \cite{Abou11}, they explicitly computed the $A_\infty$-structure of the endomorphism algebras of $\bL_0 \oplus \bL_1$ to conclude that
\begin{equation}\label{eqn:cprcpu1}
\End (\bL_0 \oplus \bL_1) \simeq \End ( \mathcal{O}_{C} (-1)[1] \oplus \mathcal{O}_C).
\end{equation} 
See \cite[Section 5, 7]{CPU} for more details.

In this paper, we shall use either the Morse-Bott model in \cite{FOOO} or pearl trajectories \cite{BC-pearl,Sheridan-CY} to study Lagrangian torus fibers and the noncommutative mirror functor.  They are conceptually easier to understand.

\begin{figure}[h]
\begin{center}
\includegraphics[height=2in]{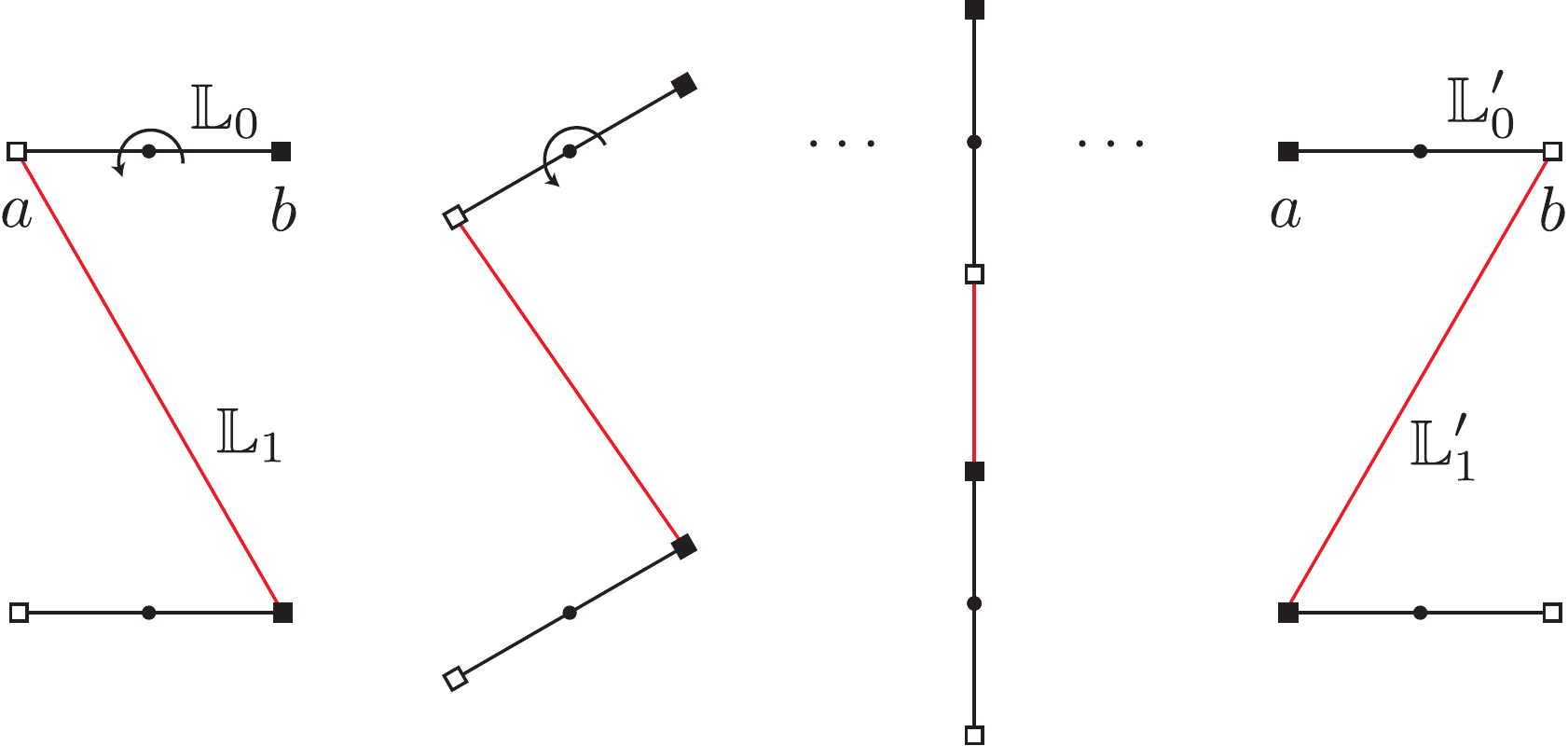}
\end{center}
\caption{Transformation of $\bL_0$ and $\bL_1$ by the symplectomorphism $\rho$}\label{fig:rotdc1}
\end{figure}


The A-flop can be realized by the symplectomorphism $\rho$ from $X_{s=0}$ to $X_{s=1}$ given in \ref{subsubsec:deflocintro} (see Figure \ref{fig:flop_rot}). Figure \ref{fig:rotdc1} shows how $\rho$ acts on $\bL_i$, where the third diagram describes the moment at which $\bL_0$ and $\bL_1$ happen to have the same phases.  
Observe that $X_{s=1}$ \eqref{eqn:xs01} is obtained from $X_{s=0}$ by swapping two sets of coordinates $(u_1,v_1)$ and $(u_2,v_2)$. However, swapping the coordinates is different from the symplectomorphism that gives A-flop, as its effect on $z$-plane shows.

As in Figure \ref{fig:rotdc1}, $\rho$ sends $\bL_0$ and $\bL_1$ to special Lagrangian spheres in $X_{s=1}$ which we denote by $\bL_0'$ and $\bL_1'$ respectively. 
Let $\mathcal{F}'$ denote the Fukaya subcategory of $X_{s=1}$ consisting of $\bL_0'$ and $\bL_1'$. There is a natural functor $\rho_{\ast}: \mathcal{F} \to \mathcal{F}'$ induced by the symplectomorphism $\rho$.
On the other hand, we can repeat the same argument as in the proof of Theorem \ref{thm:cpumod} to see that
$$D^b \mathcal{F}' \simeq \mathcal{D}_{\hat{Y}^\dagger/Y} \qquad \mbox{with} \qquad \bL_0' \mapsto \mathcal{O}_{C^\dagger} (-1)  \quad \mbox{and} \quad  \bL_1' \mapsto \mathcal{O}_{C^\dagger}(-2)[1]$$
Notice that this identification is coherent with the fact that $\bL_0'$ is somewhat similar to the orientation reversal of $\bL_0$, whereas $\bL_1 \stackrel{\rho}{\mapsto} \bL_1'$ can be understood as a change of winding number with respect to $z=0$.

In fact, we have 
\begin{equation}\label{eqn:cprrelsymp}
{\End} (\bL_0' \oplus \bL_1') \simeq {\End} (\bL_0 \oplus \bL_1)
\end{equation}
as two set of objects are related by a symplectomorphism, and
\begin{equation}\label{eqn:cprflopf}
 {\End} \left(\mathcal{O}_{C^\dagger} (-1) \oplus \mathcal{O}_{C^\dagger} (-2)[1]  \right) \simeq {\End} \left(  \mathcal{O}_C (-1)[1] \oplus \mathcal{O}_C \right)
 \end{equation}
due to the flop functor (see \ref{subsec:exatiyah}). It directly implies that the functor $\rho_\ast$  induced by the symplectomorphism is mirror to the flop functor through the identification of A and B side categories via \cite{CPU}. Namely,

 
%
 \begin{prop}
 We have a commutative diagram of equivalences:
 \begin{equation}
 \xymatrix{D^b \mathcal{F} \ar[d]_{\rho_\ast} \ar[rr]^{\simeq} && \mathcal{D}_{\hat{Y}/Y} \ar[d]^{\Phi}  \\
D^b \mathcal{F}'  \ar[rr]^{\simeq}&& \mathcal{D}_{\hat{Y}^\dagger/Y}}
 \end{equation}
 \end{prop}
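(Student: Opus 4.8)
The plan is to convert the commutativity of the square into a statement about the $A_\infty$-endomorphism algebras of the generating objects, and then transport it back to the categories. Each of the four corners is generated by two objects ($\bL_0,\bL_1$ for $D^b\mathcal{F}$; $\bL_0',\bL_1'$ for $D^b\mathcal{F}'$; $\mathcal{O}_C(-1)[1],\mathcal{O}_C$ for $\mathcal{D}_{\hat{Y}/Y}$; and $\mathcal{O}_{C^\dagger}(-1),\mathcal{O}_{C^\dagger}(-2)[1]$ for $\mathcal{D}_{\hat{Y}^\dagger/Y}$), so each is recovered as $\Tw$ of the $A_\infty$-endomorphism algebra of its generating pair. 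An exact functor between two such categories is determined, up to natural isomorphism, by its values on the generators together with the induced $A_\infty$-map on their endomorphism algebras; hence it suffices to check that the square commutes on exactly these two pieces of data.

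The cleanest way to organize this is to define the candidate right-hand functor as the conjugate $\widetilde\Phi := \Xi' \circ \rho_* \circ \Xi^{-1}$, where $\Xi,\Xi'$ denote the two horizontal CPU equivalences, and then to identify $\widetilde\Phi$ with the geometric flop functor $\Phi$ of Section \ref{subsec:exatiyah}. By construction $\widetilde\Phi$ is an equivalence $\mathcal{D}_{\hat{Y}/Y}\to\mathcal{D}_{\hat{Y}^\dagger/Y}$ and the square commutes tautologically with $\widetilde\Phi$ in place of $\Phi$; the whole content is then the equality $\widetilde\Phi \simeq \Phi$.

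To prove $\widetilde\Phi\simeq\Phi$ I would compare both functors on generators and on morphisms. For generators: $\rho_*$ sends $\bL_0\mapsto\bL_0'$ and $\bL_1\mapsto\bL_1'$ by construction, since it is induced by the symplectomorphism $\rho$ (Figure \ref{fig:rotdc1}), so unwinding the definitions gives $\widetilde\Phi(\mathcal{O}_C(-1)[1])=\mathcal{O}_{C^\dagger}(-1)$ and $\widetilde\Phi(\mathcal{O}_C)=\mathcal{O}_{C^\dagger}(-2)[1]$ via \eqref{eqn:equivSi} and its flopped analogue. On the other hand, relations (1)--(2) of Section \ref{subsec:exatiyah}, together with the exact triangle \eqref{eqn:pervpt} for the image of a skyscraper, determine $\Phi$ on these same generators. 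For morphisms: because $\rho$ is a genuine symplectomorphism it strictly transports Floer data, so the algebra isomorphism it induces is \eqref{eqn:cprrelsymp}; combined with the two CPU identifications \eqref{eqn:cprcpu1} this is precisely the isomorphism \eqref{eqn:cprflopf} that the flop functor induces, so the induced $A_\infty$-maps agree.

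The main obstacle is the grading bookkeeping in the object matching. The relations of Section \ref{subsec:exatiyah} give $\Phi(\mathcal{O}_C(-1)) = \mathcal{O}_{C^\dagger}(-1)[1]$, hence $\Phi(\mathcal{O}_C(-1)[1]) = \mathcal{O}_{C^\dagger}(-1)[2]$, which matches $\widetilde\Phi(\mathcal{O}_C(-1)[1]) = \mathcal{O}_{C^\dagger}(-1)$ only after accounting for a global shift. I would argue that this overall shift is fixed by the grading convention chosen in the footnote (using $d\log\bar{z}$ in place of $d\log z$, equivalently the freedom to compose $\Phi$ with a shift functor), so that the \emph{relative} shift between the two generators -- which is all the endomorphism algebra and the induced bimodule actually see -- matches on the nose. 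Once this grading normalization is pinned down, the equality of the induced $A_\infty$-maps is exactly the compatibility of \eqref{eqn:cprrelsymp} and \eqref{eqn:cprflopf} under the CPU identifications, and the commutativity of the diagram of categories follows from the universal property of the generated triangulated categories.
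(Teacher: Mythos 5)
Your proposal is essentially the paper's own argument, just spelled out more carefully: the paper's proof is two sentences, checking the square on the generators ``by construction'' and on morphisms via the three endomorphism-algebra isomorphisms \eqref{eqn:cprcpu1}, \eqref{eqn:cprrelsymp}, \eqref{eqn:cprflopf}, exactly as you do with your conjugated functor $\widetilde\Phi$. One small correction to your handling of the grading: the discrepancy you notice is real, but it is not resolved by a single global shift. With the normalization $\Phi(\mathcal{O}_C(-1))=\mathcal{O}_{C^\dagger}(-1)[1]$ (the equivalence taking $^0\Per(\hat{Y}/Y)$ to $^{-1}\Per(\hat{Y}^\dagger/Y)$), the image $\Phi(\mathcal{O}_C)$ is in general not a shifted line bundle at all, so no overall shift can match both generators simultaneously. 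The correct fix is to take $\Phi$ in the proposition to be the standard equivalence normalized so that it carries the heart $^{-1}\Per(\mathcal{D}_{\hat{Y}/Y})$ to $^{0}\Per(\mathcal{D}_{\hat{Y}^\dagger/Y})$, hence its simples $\{\mathcal{O}_C(-1)[1],\mathcal{O}_C\}$ to the simples $\{\mathcal{O}_{C^\dagger}(-1),\mathcal{O}_{C^\dagger}(-2)[1]\}$ on the nose; this is precisely the matching that \eqref{eqn:cprflopf} encodes, and with it your argument goes through as written.
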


\begin{proof}
It obviously commutes on the level of objects by the construction.  \eqref{eqn:cprcpu1}, \eqref{eqn:cprrelsymp} and \eqref{eqn:cprflopf} imply that the diagram also commutes on morphism level.
\end{proof}

We shall study how the symplectomorphism $\rho$ or its induced functor $\rho_\ast$ acts on various geometric objects in $D^b \mathcal{F}$. For that, we should  examine what kind of geometric objects are actually contained in $D^b \mathcal{F}$.
%

%

\subsection{Geometric objects of $D^b \mathcal{F}$}\label{subsec:geobjf}

Let us first prove that any torus fiber intersecting $\bL_0$ and $\bL_1$ is isomorphic to a mapping cone $Cone \left(\bL_0 \stackrel{\alpha}{\to} \bL_1\right)$ for some degree-1 element $\alpha \in HF(\bL_0,\bL_1)$ in the derived Fukaya category. In particular this will imply that the category $D^b \mathcal{F}$ contains those torus fibers as objects. 

One can choose the gradings on $\bL_i$ such that  $HF^\ast(\bL_0,\bL_1) = H^\ast (S^1_a)[-1] \oplus H^\ast (S^1_b)[-1]$. Here, we use the Morse-Bott  model, where $S^1_a$ and $S^1_b$ denotes the intersection loci over $z=a$ and $z=b$, respectively. Both $S^1_a$ and $S^1_b$ are isomorphic to a circle. Thus degree 1 elements in $HF(\bL_0,\bL_1)$ are given by linear combinations of (Poincar\`e duals of) fundamental classes of $S^1_a$ and $S^1_b$.  
The cone $Cone (\bL_0 \stackrel{\alpha}{\to} \bL_1)$ can be identified with a boundary deformed object $(\bL_0 \oplus \bL_1, \alpha)$ (see \cite{FOOO} or \cite{Seidel-book}).

Let $L_c:=$ (for $a < c < b$) be a Lagrangian torus that intersects $\bL_0$  at $z=c$. This condition determines $L_c$ uniquely, as components of $L_c$ in double conic fiber direction should satisfy the same equation as those of $\bL_0$ and $\bL_1$. We orient $L_c$ as in Figure \ref{fig:spheresss} in $z$-plane components, and use the standard one (from $d\theta_1 d\theta_2$ as for $S_i$) along the conic fiber directions.
$L_c$ cleanly intersects $\bL_0$ and $\bL_1$ along 2-dimensional tori which we denote by $T_0:= L_c \cap \bL_0$ and $T_1:=L_c \cap \bL_1$. One can  see that $CF(L_c,\bL_0)=C^\ast (T_0)$ and $CF(\bL_1,L_c) = C^\ast (T_1)$ for suitable choice of a grading on $L_c$. Similarly, $CF(\bL_0, L_c) = C^\ast (T_0)[-1]$ and $CF(L_c,\bL_1) = C^\ast (T_1)[-1]$. 

Let $U_{\rho_1,\rho_2,\rho_z}$ be a unitary flat line bundle on $L_c$ whose holonomies along circles in the double conic fibers are $\rho_1$ and $\rho_2$ and that along the circle in $z$-plane is $\rho_z$.

\begin{lemma}
If $(\rho_1,\rho_2) \neq (1,1)$, then 
\begin{equation}\label{eqn:vanish12}
HF(\bL_0, (L_c,U_{\rho_1,\rho_2,\rho_z})) =0, \quad HF(\bL_1, (L_c,U_{\rho_1,\rho_2,\rho_z})) =0.
\end{equation} 
\end{lemma}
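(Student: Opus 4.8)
The plan is to reduce the vanishing to the standard fact that the cohomology of a torus twisted by a nontrivial flat unitary line bundle is zero. First I would record the geometry of the clean intersections, which is already essentially given above. For $|b| < k < |a|$ the matching path of $\bL_0$ (resp.\ $\bL_1$) meets the circle $|z| = k$ traced out by the $z$-projection of $L_c$ in exactly one point, and over that point both Lagrangians carry the same conic fiber $\{|u_1| = |v_1|,\ |u_2| = |v_2|\}$. Hence $T_0 = L_c \cap \bL_0$ and $T_1 = L_c \cap \bL_1$ are each a single clean intersection component, diffeomorphic to the fiber two-torus with angle coordinates $(\theta_1,\theta_2) = (\arg u_1, \arg u_2)$.

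Next I would invoke the Morse--Bott (or pearl) model for Floer cohomology along a clean intersection, as in \cite{FOOO,BC-pearl,Sheridan-CY}. In this model the chain complex underlying $HF(\bL_0,(L_c,U_{\rho_1,\rho_2,\rho_z}))$ is the de Rham (or Morse) complex of the clean intersection locus $T_0$ with coefficients in the restricted local system $\mathcal{L}_0 := U_{\rho_1,\rho_2,\rho_z}|_{T_0}$ (the bundle on $\bL_0$ being trivial), and the full Floer differential is filtered by energy with associated graded the twisted de Rham differential. This produces a spectral sequence whose first page is $H^\ast(T_0;\mathcal{L}_0)$ and which converges to $HF(\bL_0,(L_c,U_{\rho_1,\rho_2,\rho_z}))$. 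Because $T_0$ lies over a single value of $z$, only the holonomies $\rho_1,\rho_2$ along the circles $\theta_1,\theta_2$ enter; the $z$-holonomy $\rho_z$ plays no role, and the grading shifts recorded above do not affect the vanishing.

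Finally I would compute the first page. Writing $T_0 \cong S^1_{\theta_1}\times S^1_{\theta_2}$, the line bundle $\mathcal{L}_0$ has holonomies $\rho_1,\rho_2$ (up to inversion, which is immaterial) along the two factors. Since $H^\ast(S^1;\mathcal{L}_\rho)=0$ whenever $\rho\neq 1$, the K\"unneth formula gives $H^\ast(T_0;\mathcal{L}_0)=0$ as soon as $(\rho_1,\rho_2)\neq(1,1)$. The spectral sequence therefore has vanishing first page, so $HF(\bL_0,(L_c,U_{\rho_1,\rho_2,\rho_z}))=0$; running the identical argument with $T_1$ in place of $T_0$ yields $HF(\bL_1,(L_c,U_{\rho_1,\rho_2,\rho_z}))=0$.

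The step I expect to require the most care is the second one: verifying that the Morse--Bott model applies here and that the local system enters precisely as the restriction to the clean intersection, so that the first page is genuinely $H^\ast(T_0;\mathcal{L}_0)$ with holonomies $(\rho_1,\rho_2)$. Once the generators of the complex are seen to vanish already as a graded vector space, the conclusion follows immediately, independently of any higher Floer differentials or holomorphic strip corrections.
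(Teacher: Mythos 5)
Your proposal is correct and follows essentially the same route as the paper: both use the Morse--Bott (clean intersection) model, identify the complex with the twisted cohomology of the intersection torus $T_i$, and conclude vanishing from $H^\ast(S^1;\mathcal{L}_\rho)=0$ for $\rho\neq 1$. The only cosmetic difference is that you package the argument as an energy-filtration spectral sequence whose first page already vanishes, whereas the paper instead observes directly that there are no holomorphic strips so the differential is purely classical; both observations make the conclusion independent of quantum corrections.
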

\begin{proof}
One can simply use the Morse-Bott model for each of cohomology groups in \eqref{eqn:vanish12}.  Each of this group is simply a singular cohomology of the intersection loci, equipped with twisted differential. Since the intersection loci are 2-dimensional torus in the double conic fiber, the twisting is determined by $(\rho_1,\rho_2)$. Here we only have classical differential, as there is no holomorphic strip between $\bL_i$ and $T_c$. One can easily check that the cohomology vanishes if the twisting is nontrivial.

Alternatively, one can perturb Lagrangians to have transversal intersections as in Figure \ref{fig:tcone_pert} to see that the Floer differential has coefficients $\rho_1-1$ and $\rho_2-1$, which are nonzero for nontrivial $(\rho_1,\rho_2)$. 
\end{proof}
The lemma implies that $(L_c,U_{\rho_1,\rho_2,\rho_z})$ has no Floer theoretic intersection with $\bL_0$ or $\bL_1$ unless $\rho_1=\rho_2=1$.
From now on, we will only consider flat line bundles of the type $U_{0,0,\rho_z}$ on Lagrangian torus fibers, which will be written as $U_{\rho_z}$ instead of $U_{0,0,\rho_z}$ for notational simplicity. Let $P_0:=PD[T_0] \in CF^\ast (L_c,\bL_0)=C^\ast (T_0)$ and $P_1:=PD[T_1] \in CF^\ast (\bL_1, L_c)=C^\ast(T_1)$. We also set $\alpha_a:=PD[S^1_a] \in CF^\ast (\bL_0,\bL_1)$ and $\alpha_b:=PD[S^1_b] \in CF^\ast (\bL_0,\bL_1)$. Notice that $\deg \alpha_a = \deg \alpha_b =1$ whereas $\deg P_0 = \deg P_1 =0$.

\begin{figure}[h]
\begin{center}
\includegraphics[height=2.2in]{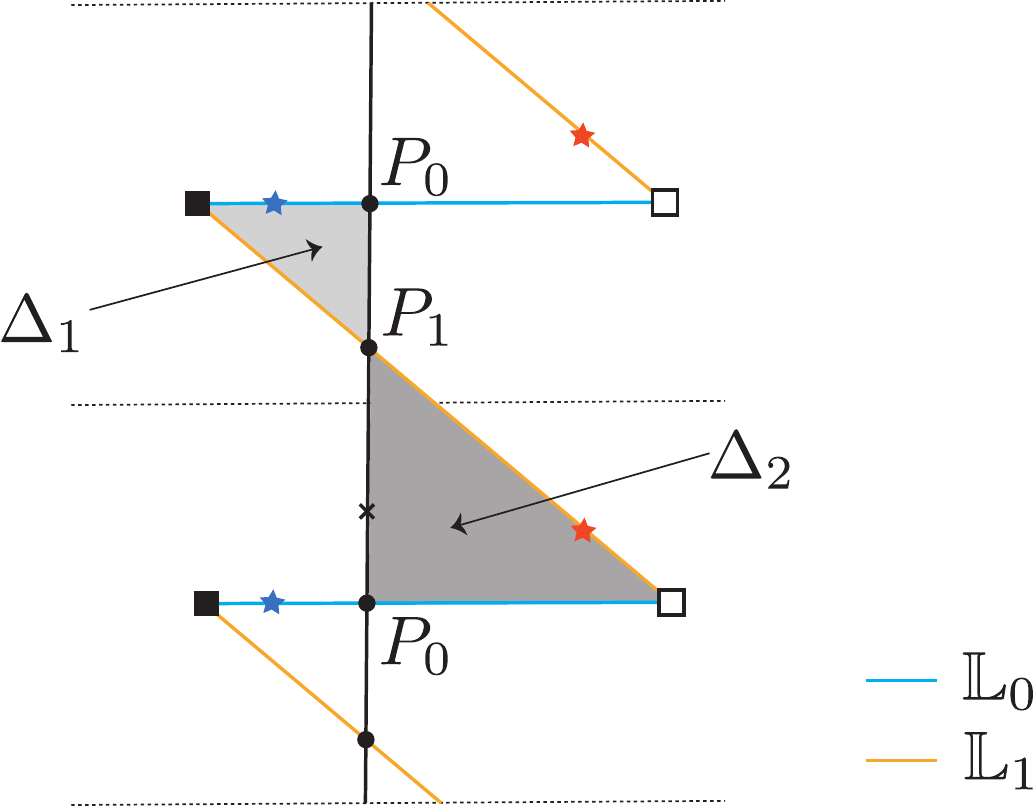}
\end{center}
\caption{Two triangles contributing to $m_1$ and $m_2$}\label{fig:tcone}
\end{figure}

\begin{lemma}\label{lem:ratioalpha}
$P_0 \in CF^0 ( (L_c,U_{\rho_z}), (\bL_0 \oplus \bL_1, \alpha) )$ and $P_1 \in CF^0 ( (\bL_0 \oplus \bL_1, \alpha), (L_c,U_{\rho_z}) )$ are cycles with respect to $m_1^{0,\alpha}$ and $m_1^{\alpha,0}$ respectively if and only if $\alpha$ is given as $\lambda_a \alpha_a + \lambda_b \alpha_b \in CF^1(\bL_0,\bL_1)$ with $(\lambda_a:\lambda_b) = (T^{\omega(\Delta_2)} \rho_z : T^{\omega(\Delta_1)} )$\footnote{It is harmless to put $T=e^{-1}$ since only finitely many polygons contribute to $A_\infty$-structures. Nevertheless we will keep the notation $T$ to highlight contributions from nontrivial holomorphic polygons.} where $\Delta_1$ and $\Delta_2$ are triangles shaded in Figure \ref{fig:tcone}.
\end{lemma}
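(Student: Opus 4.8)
The plan is to compute the two deformed differentials directly in the Morse--Bott model and to read off the cycle conditions as a single homogeneous linear equation in $(\lambda_a,\lambda_b)$. First I would expand the boundary-deformed products,
\[
m_1^{0,\alpha}(P_0)=\sum_{l\ge 0} m_{1+l}\bigl(P_0,\underbrace{\alpha,\dots,\alpha}_{l}\bigr),\qquad
m_1^{\alpha,0}(P_1)=\sum_{l\ge 0} m_{1+l}\bigl(\underbrace{\alpha,\dots,\alpha}_{l},P_1\bigr),
\]
where $\alpha$ is inserted only on the deformed ($\bL_0\oplus\bL_1$) side. The key simplification is that $\alpha\in CF^1(\bL_0,\bL_1)$ is strictly off-diagonal as an endomorphism of $\bL_0\oplus\bL_1$: it sends the $\bL_0$-summand to the $\bL_1$-summand. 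Hence no two consecutive copies of $\alpha$ are composable, so every term with $l\ge 2$ vanishes identically and only the single-insertion terms survive:
\[
m_1^{0,\alpha}(P_0)=m_1(P_0)+m_2(P_0,\alpha),\qquad m_1^{\alpha,0}(P_1)=m_1(P_1)+m_2(\alpha,P_1).
\]
Since $P_0$ and $P_1$ are the degree-$0$ fundamental classes of the clean intersections $T_0$ and $T_1$, they are closed for the unperturbed $m_1$, so $m_1(P_0)=m_1(P_1)=0$ and the whole problem reduces to the triangle products $m_2(P_0,\alpha_a)$, $m_2(P_0,\alpha_b)$ and $m_2(\alpha_a,P_1)$, $m_2(\alpha_b,P_1)$.

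Next I would evaluate these geometrically. A degree count forces the output of $m_2(P_0,\alpha_i)$ to lie in $CF^1(L_c,\bL_1)\cong H^2(T_1)$, which is one-dimensional and generated by the point class $[\pt]$. Each such product counts rigid holomorphic triangles with boundary on $L_c,\bL_0,\bL_1$ and corners at $P_0$ (on $T_0$), $\alpha_i$ (on $S^1_a$ or $S^1_b$), and the output (on $T_1$); in the $z$-projection these are exactly the two shaded regions $\Delta_1,\Delta_2$ of Figure \ref{fig:tcone}. By the $T^2$-equivariance of all the Lagrangians, such triangles occur in a $T^2$-family sweeping the conic-fibre directions, and constraining the output to the point class cuts this down to a single rigid solution; thus each region contributes $\pm\,T^{\omega(\Delta_i)}\cdot(\text{holonomy})\cdot[\pt]$. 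The crucial geometric input, read off from Figure \ref{fig:tcone}, is that exactly one of the two triangles encircles the puncture $z=0$, so its $L_c$-boundary traverses the $z$-circle once and acquires the holonomy $\rho_z$ of $U_{\rho_z}$, while the other does not; here $\alpha_a$ is paired with $\Delta_1$ and $\alpha_b$ with the wrapping triangle $\Delta_2$. No larger polygon is rigid for these corner conditions, and there is no disc bubbling, so these are the only contributions.

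Collecting terms, $m_1^{0,\alpha}(P_0)$ equals, up to an overall nonzero scalar, $\bigl(\lambda_a\,T^{\omega(\Delta_1)}-\lambda_b\,T^{\omega(\Delta_2)}\rho_z\bigr)[\pt]$, the relative minus sign being produced by the orientation conventions fixed earlier. Being a cycle is therefore equivalent to the single relation $\lambda_a\,T^{\omega(\Delta_1)}=\lambda_b\,T^{\omega(\Delta_2)}\rho_z$, whose projective solution is precisely $(\lambda_a:\lambda_b)=(T^{\omega(\Delta_2)}\rho_z:T^{\omega(\Delta_1)})$. An entirely parallel computation of $m_2(\alpha,P_1)$, whose output lands in $CF^1(\bL_0,L_c)\cong H^2(T_0)$, uses the same two regions $\Delta_1,\Delta_2$ and yields the identical relation. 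Hence the cycle conditions for $P_0$ and $P_1$ are simultaneously equivalent to the stated description of $\alpha$, which proves both directions of the ``if and only if''.

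The step I expect to be the main obstacle is pinning down the relative sign between the two triangle contributions. For the two nonzero terms to cancel they must enter with opposite signs, and this sign is dictated by the chosen orientations on $S_i$, $T_0$, $T_1$ together with the orientations of the triangle moduli spaces; verifying it carefully — along with confirming that the $T^2$-family evaluation really outputs the point class with coefficient $\pm 1$ and that no multiply-wrapped polygon contributes — is the genuinely technical part, whereas the two areas $\omega(\Delta_1),\omega(\Delta_2)$ and the single factor of $\rho_z$ can be read off directly from Figure \ref{fig:tcone}.
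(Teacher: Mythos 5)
Your proposal is correct and follows essentially the same route as the paper: both reduce the deformed differential to the single-insertion terms $m_2(\alpha,P_1)$ and $m_2(P_0,\alpha)$, count the two rigid triangles projecting to $\Delta_1,\Delta_2$, observe that exactly one of them carries the holonomy factor $\rho_z$, and read off the ratio $(\lambda_a:\lambda_b)$ from the resulting cancellation condition, while leaving the relative sign to the orientation conventions just as the paper does. The only quibble is a shift-convention slip in identifying the one-dimensional target (the paper's output $\bar{P}_0=PD[T_0]$ sits in $H^0$ of the clean intersection, not $H^2$), which does not affect the argument.
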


\begin{proof}
We will prove the lemma for $P_1$ only, and the proof for $P_0$ is similar. We pick a point $\times$ as in Figure \ref{fig:tcone} for representative of $U_{\rho_z}$ so that when boundary of a holomorphic polygon passes this point, the corresponding $m_k$-operation is multiplied by $\rho_z^{\pm1}$ depending on the orientation. (More precisely, the point $\times$ represent 2-dimensional subtorus in $L_c$ lying over this point, which is called a hyper-torus and used to fix the gauge of a flat line bundle in \cite{CHLtoric}.)

 Observe that two holomorphic triangles $\Delta_1$ and $\Delta_2$ shown in Figure \ref{fig:tcone} contribute to the following operations:
 \begin{equation}\label{eqn:m1bm2}
m_2(\lambda_a \alpha_a,P_1) = \lambda_a T^{\omega(\Delta_1)} \bar{P}_0, \qquad m_2(\lambda_b \alpha_b,P_1) = - \rho_z \lambda_b T^{\omega(\Delta_2)} \bar{P}_0.
\end{equation}
where $\bar{P}_0$ is $PD[T_0]$ regarded as an element of $CF(\bL_0, L_c) = C^\ast (T_0) [-1] \subset CF((\bL_0 \oplus \bL_1, \alpha), (L_c, U_{\rho_z}))$ (note that $\deg \bar{P}_0 = 1$).  We do not provide the precise sign rule here since it is not crucial in our argument. Indeed we can assume that two operations in \eqref{eqn:m1bm2} produces outputs with the opposite signs by replacing $\lambda_b$ to $-\lambda_b$ if necessary.

Therefore we see that
$$m_1^{\alpha,0}(P_1) =\sum_k m_k (\alpha, \cdots, \alpha, P_1) = \left(\lambda_a T^{\omega(\Delta_1)} - \rho_z \lambda_b T^{\omega(\Delta_2)} \right) \bar{P}_0=0$$
if and only if $\lambda_a$ and $\lambda_b$ have the ratio as given in the statement.
\end{proof}

We next prove that $P_0$ and $P_1	$ in Lemma \ref{lem:ratioalpha} give isomorphisms between two objects $(L_c, U_{\rho_z})$ and $Cone (\bL_0 \stackrel{\alpha}{\to} \bL_1)$ where $\alpha$ is chosen as in Lemma \ref{lem:ratioalpha}. Here, it is enough to present the ratio between $\lambda_a$ and $\lambda_b$,
as the mapping cone does not depend on the scaling of $\alpha$ by an element in $\C^\times$ (or in $\Lambda\setminus \{0\}$ if we do not substitute $T$ by $e^{-1}$).


%
%
%

\begin{prop}\label{prop:teqcone}
We have $(L_c, U_{\rho_z}) \cong Cone (\bL_0 \stackrel{\alpha}{\to} \bL_1)$ in the derived Fukaya category of $X_{s=0}$ where $\alpha=\lambda_a \alpha_a + \lambda_b \alpha_b$ is chosen as in Lemma \ref{lem:ratioalpha}. 
\end{prop}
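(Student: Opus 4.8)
The plan is to show that the two degree-$0$ cocycles $P_0$ and $P_1$ produced in Lemma \ref{lem:ratioalpha} are mutually inverse isomorphisms in the derived Fukaya category. Recall that in an $A_\infty$-category a morphism between two objects is an isomorphism precisely when it is a degree-$0$ cocycle whose class is invertible in the cohomology category; thus, since $P_0$ and $P_1$ are already cocycles, it suffices to prove that the $\alpha$-deformed products $m_2^{0,\alpha,0}(P_1,P_0)$ and $m_2^{\alpha,0,\alpha}(P_0,P_1)$ (the superscripts record the deformation data $\alpha$ on $\bL_0\oplus\bL_1$ and the flat bundle $U_{\rho_z}$ on $L_c$) are cohomologous to the identity morphisms $e_{(L_c,U_{\rho_z})}$ and $e_{Cone}$ respectively, where $Cone=Cone(\bL_0\stackrel{\alpha}{\to}\bL_1)$.

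First I would compute $m_2^{0,\alpha,0}(P_1,P_0)\in CF^0((L_c,U_{\rho_z}),(L_c,U_{\rho_z}))$. Since $P_0$ has output boundary on $\bL_0$ while $P_1$ has input boundary on $\bL_1$, the undeformed term $m_2(P_1,P_0)$ vanishes for lack of a common Lagrangian, so the leading contribution is the once-deformed term $m_3(P_1,\alpha,P_0)$, which counts holomorphic quadrilaterals with boundary on $L_c,\bL_0,\bL_1,L_c$ and corners $P_0,\alpha,P_1$. These are built from the same local configurations over $z=a$ and $z=b$ that gave the triangles $\Delta_1,\Delta_2$ of Figure \ref{fig:tcone}, glued to a small triangle at the corner $P_0$ over $z=c$; by a Maslov-index count no other finite-energy polygons of the correct degree occur. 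A degree count places the output in $HF^0(L_c,L_c)\cong H^0(L_c)=\C\cdot e_{L_c}$, so it is automatically a scalar multiple of the unit and the only issue is nonvanishing of the scalar. The ratio $(\lambda_a:\lambda_b)=(T^{\omega(\Delta_2)}\rho_z:T^{\omega(\Delta_1)})$ from Lemma \ref{lem:ratioalpha} is exactly the condition making these two quadrilaterals combine with a nonzero coefficient $c\in\Lambda^\times$; rescaling $P_1$ by $c^{-1}$ then yields $e_{(L_c,U_{\rho_z})}$.

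Next I would compute $m_2^{\alpha,0,\alpha}(P_0,P_1)\in CF^0(Cone,Cone)$. Here the undeformed term $m_2(P_0,P_1)$ is an off-diagonal class in $CF(\bL_1,\bL_0)$, while the diagonal components $e_{\bL_0}\in CF(\bL_0,\bL_0)$ and $e_{\bL_1}\in CF(\bL_1,\bL_1)$ making up $e_{Cone}$ arise from the once-deformed terms $m_3(P_0,P_1,\alpha)$ and $m_3(\alpha,P_0,P_1)$; these are supported on the same quadrilaterals as above and carry the same nonzero coefficient. To conclude that the off-diagonal term does not spoil the identity, I would record, via the long exact sequence attached to the triangle $\bL_0\stackrel{\alpha}{\to}\bL_1\to Cone\stackrel{[1]}{\to}$ together with the known groups $HF(\bL_i,\bL_j)$, that $HF^0(Cone,Cone)\cong\C$ is spanned by $e_{Cone}$; hence the degree-$0$ cocycle $m_2^{\alpha,0,\alpha}(P_0,P_1)$ is cohomologous to a multiple of $e_{Cone}$ and its off-diagonal chain-level piece is exact. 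Combining the two paragraphs exhibits $[P_0]$ and $[P_1]$ as mutually inverse, proving $(L_c,U_{\rho_z})\cong Cone(\bL_0\stackrel{\alpha}{\to}\bL_1)$.

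The main obstacle is the bookkeeping of the $A_\infty$-products on the cone side: one must confirm that no higher insertions of $\alpha$, and no polygons wrapping the deleted divisor $z=0$, contribute in the relevant degree, and that the off-diagonal component of $m_2^{\alpha,0,\alpha}(P_0,P_1)$ genuinely cancels rather than producing a nontrivial endomorphism. A cleaner conceptual alternative that sidesteps much of this is to recognize $L_c$ as the Lagrangian surgery $\bL_1\#\bL_0=S_1\#S_0$ performed at the two intersection circles $S^1_a,S^1_b$, and to invoke the general principle that such a surgery represents the mapping cone of the associated degree-$1$ morphism; in this description the holonomy $\rho_z$ and the ratio $\lambda_a:\lambda_b$ of Lemma \ref{lem:ratioalpha} encode the surgery profile, and the claimed isomorphism is immediate.
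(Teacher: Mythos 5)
Your proposal follows essentially the same route as the paper's proof: both establish the isomorphism by showing that the cocycles $P_0,P_1$ of Lemma \ref{lem:ratioalpha} are mutually inverse up to a nonzero scalar, computing the deformed products $m_2^{\alpha,0,\alpha}$ and $m_2^{0,\alpha,0}$ via the same triangles $\Delta_1,\Delta_2$ (which enter as $m_3$-terms with one $\alpha$-insertion) and using the chosen ratio $\lambda_a:\lambda_b$ to land on multiples of the units. The only substantive difference is your treatment of the off-diagonal component of $m_2^{\alpha,0,\alpha}(P_0,P_1)$, which the paper does not discuss; your argument via $HF^0(\mathrm{Cone},\mathrm{Cone})\cong\C$ works, though the term in fact vanishes outright for degree reasons since $CF(\bL_1,\bL_0)$ is concentrated in degrees $1$ and $2$.
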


\begin{proof}
Let us fix $\lambda_a$ and $\lambda_b$ to be $\rho_z T^{\omega(\Delta_2)} $ and $T^{\omega(\Delta_1)}$ for simplicity.
We claim that
$$m_2^{\alpha,0,\alpha} ( P_1,P_0) =C(\one_{\bL_0} + \one_{\bL_1}), \quad m_2^{0,\alpha,0} ( P_0,P_1) =C \one_{L_c}$$
for some common constant $C$. (One should rescale $P_0$ and $P_1$ to get strict identity morphisms.)
To see this, pick generic points ``$\star$" on $\bL_0$, $\bL_1$ as in Figure \ref{fig:tcone}, whose number of appearance in the boundary of holomorphic discs determines the coefficient of $\one_{\bL_i}$. The same triangles in the proof of Lemma \ref{lem:ratioalpha} now contribute as
$$m_2^{\alpha,0,\alpha} ( P_1,P_0)  = \lambda_a T^{\omega(\Delta_1)} \one_{\bL_1} + \rho_z \lambda_b T^{\omega(\Delta_2)} \one_{\bL_0} = \rho_z T^{\omega(\Delta_1)+\omega(\Delta_2)} \left(\one_{\bL_0} + \one_{\bL_1} \right)$$
Here two contributions add up contrary to \eqref{eqn:m1bm2}. In fact, the relative signs are completely determined by $z$-directions since all the Lagrangians share the other directions, and one can use the sign rule due to Seidel \cite{Seidel-g2} for $z$-plane components.

It is easy to check that the computation does not depend on the choice of generic points $\star$ (it is essentially because $P_0$ and $P_1$ are cycles). Likewise, $\Delta_2$ contributes to
$$m_2^{0,\alpha,0} ( P_0,P_1) = \rho_z \lambda_b T^{\omega(\Delta_2)} \one_{L_c} = \rho_z T^{\omega(\Delta_1)+\omega(\Delta_2)} \one_{L_c}.$$

In particular, Proposition \ref{prop:teqcone} implies the following exact sequence in the derived Fukaya category
$$ \bL_1 \to (L_c, U_{\rho_z}) \to \bL_0 \stackrel{[1]}{\to}.$$

\end{proof}


\begin{remark}
Analogously, the following gives an exact triangle in $\mathcal{D}_{\hat{Y}/Y}$:
$$ \mathcal{O}_C \to \mathcal{O}_y \to \mathcal{O}_{C} (-1)[1] \stackrel{[1]}{\to}$$
for a point $y$ in $C$, or equivalently $\mathcal{O}_y \cong Cone(\mathcal{O}_C (-1) [1] \to \mathcal{O}_C)$ for some degree 1 morphism. Note that $\mathcal{O}_y$ is a SYZ mirror of one of torus fibers $L_c$ (with a flat line bundle $U_{\rho_z}$). Proposition \ref{prop:teqcone} and the above exact triangle shows that the equivalence \eqref{eqn:equivSi} sends torus fibers to skyscraper sheaves over points in $C$. 
\end{remark}

By symmetric argument
(or by the triangulated structure on $D^b\Fuk (X_{s=0})$), one also has
$$ Cone(\bL_0 [1] \stackrel{\beta}{\to} L_c) \cong \bL_1$$
in $D^b \Fuk (X_{s=0})$ where $\deg (\beta)= 1$. Here, $[1]$ can be though of as taking orientation reversal of the $z$-component of $\bL_0$ (or, more precisely, such change of grading).
A similar statement holds true for other Lagrangian spheres $\{S_m : m \in \Z\}$ (Figure \ref{fig:spheresss}). 


%

\begin{figure}[h]
\begin{center}
\includegraphics[height=3in]{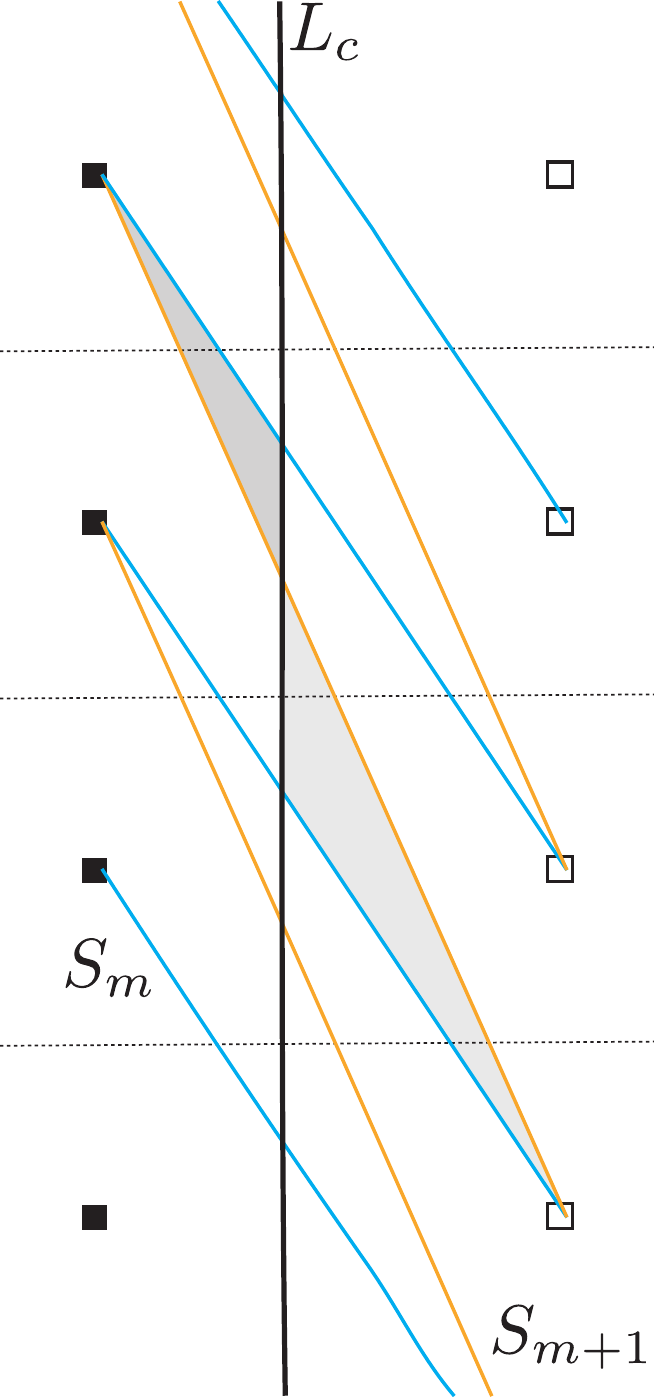}
\end{center}
\caption{Triangles contributing to $m_1$ and $m_2$ on $CF(S_m[1] \oplus S_{m+1}, L_c)$ and $CF(L_c,S_m[1] \oplus S_{m+1})$}\label{fig:smt}
\end{figure}

\begin{prop}\label{prop:smsmt} Lagrangian sphere $S_i$ for any $i$ can be obtained from taking successive cones from $S_0$ and $S_1$. More precisely, one has the following:
\begin{itemize}
\item for $m \geq 1$,
$$S_{m+1} \cong Cone (L_c \to S_{m}),$$
\item for $n \leq 0$,
$$S_{n-1} \cong Cone (S_{n}[1] \to L_c [1]).$$
\end{itemize}
\end{prop}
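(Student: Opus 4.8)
The plan is to reduce everything to a single exact triangle relating three mutually adjacent objects $S_m$, $S_{m+1}$ and a torus fiber $L_c$, and then to run an induction. For the base of the induction, $S_0=\bL_0$ and $S_1=\bL_1$ are objects of $\mathcal{F}$ by definition, and Proposition \ref{prop:teqcone} already exhibits every torus fiber $L_c$ as a cone of $\bL_0$ and $\bL_1$, hence as an object of $D^b\mathcal{F}$. Granting the triangle among $S_m$, $S_{m+1}$ and $L_c$ for every $m$, the two displayed isomorphisms are simply two different rotations of it; feeding them into the induction then writes each $S_i$ as an iterated cone of objects already known to lie in $D^b\mathcal{F}$, which is the assertion.

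To produce the triangle I would repeat the Morse--Bott computation of Lemma \ref{lem:ratioalpha} and Proposition \ref{prop:teqcone} in the local picture of Figure \ref{fig:smt}. The geometric input is that, in the universal cover of the $z$-plane, the matching paths of $S_m$ and $S_{m+1}$ together with the circle projection of $L_c$ bound the two holomorphic triangles drawn there, which play the role of $\Delta_1,\Delta_2$ from Figure \ref{fig:tcone}. Since $S_m$, $S_{m+1}$ and $L_c$ pairwise meet cleanly along $2$-tori in the conic-fiber directions, a suitable choice of gradings identifies the relevant morphism spaces with the singular cohomologies of these tori, and I would take the cone along the degree-$1$ class in $HF^1(S_m,S_{m+1})$ Poincar\'e dual to the appropriate intersection circle, realizing $L_c\cong Cone(S_m[1]\to S_{m+1})$.

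The isomorphism is verified exactly as in Proposition \ref{prop:teqcone}: the Poincar\'e duals of the two intersection tori furnish degree-$0$ elements $Q\in CF(S_m[1]\oplus S_{m+1},L_c)$ and $Q'\in CF(L_c,S_m[1]\oplus S_{m+1})$, and the two triangles of Figure \ref{fig:smt} compute $m_2(Q,Q')$ and $m_2(Q',Q)$. Tracking the enclosed symplectic areas and the holonomy factor $\rho_z$ through the hyper-torus marking, as in Lemma \ref{lem:ratioalpha}, shows that for the cone-defining class the two triangle contributions reinforce, so that both products equal one common invertible scalar times the respective identity morphisms; after rescaling these give mutually inverse isomorphisms. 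Rotating the resulting exact triangle $S_m[1]\to S_{m+1}\to L_c\stackrel{[1]}{\to}$ then yields $S_{m+1}\cong Cone(L_c\to S_m)$ for $m\geq 1$ and, with the indices shifted by one and the orientation of the $z$-component reversed, $S_{n-1}\cong Cone(S_n[1]\to L_c[1])$ for $n\leq 0$.

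The delicate point, once again, is the sign and grading bookkeeping: one must check that the chosen class is genuinely a cocycle, that the two triangle contributions add rather than cancel in each $m_2$, and that the grading shifts $[1]$ are carried correctly through the rotations so that the cones land in the degree matching $S_{m+1}$ and $S_{n-1}$. As in Proposition \ref{prop:teqcone} this is settled by Seidel's sign rule applied to the $z$-plane components, all Lagrangians sharing the conic-fiber directions, together with a possible redefinition of the cone-defining class to fix the relative sign.
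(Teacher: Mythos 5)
Your proposal is correct and follows essentially the same route as the paper: both establish the key exact triangle by showing $L_c\cong \Cone(S_m[1]\xrightarrow{\alpha} S_{m+1})$ via the clean-intersection Floer computation with the two holomorphic triangles of Figure \ref{fig:smt} (exactly as in Lemma \ref{lem:ratioalpha} and Proposition \ref{prop:teqcone}), and then rotate the triangle to obtain the two displayed cone identities. The extra detail you supply on the $m_2$-products of the Poincar\'e duals of the intersection tori, and the explicit inductive framing, are just an unpacking of what the paper defers to ``completely parallel to the proof of Lemma \ref{lem:ratioalpha}.''
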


\begin{proof}
We only prove the first identity, and the proof for the second can be performed in a similar manner.
One can easily check that $L_c = Cone (S_{m} [1] \stackrel{\alpha}{\to} S_{m+1})$ (for $n \leq 0$) by the same argument as in the proof of Lemma \ref{lem:ratioalpha}, where $\alpha$ is a degree 1 morphism from $S_{n-1}$ to $S_n [1]$. The contributing pair of triangles are as shown in Figure \ref{fig:smt}, and hence we should take into account the relative areas of these two triangles together with the location of $c\in (a,b)$, when we choose $\alpha$. We omit the details as it is completely parallel to the proof of Lemma \ref{lem:ratioalpha}.

From $L_c = Cone (S_{m}[1] \stackrel{\alpha}{\to} S_{m+1})$, we have an exact triangle in the derived Fukaya category $S_{m+1} \to L_c \to S_{m} [1] \stackrel{[1]}{\to}$ or equivalently $S_{m} \to S_{m+1} \to L_c  \stackrel{[1]}{\to}$, which implies $S_{m+1}  = Cone (L_c \to S_m)$ for some degree 1 morphism.


\end{proof}

We conclude that $D^b \mathcal{F}$ contains a sequence of Lagrangian spheres $\{S_n: n \in \Z\}$ and $\mathbb{P}^1\setminus\{0,\infty\}$-family of Lagrangian tori parametrized by $(c,\rho_z)$, where two missing points $0$ and $\infty$ are presumably corresponding to two singular torus fibers. Indeed, $D^b \mathcal{F}$ contains the cones $Cone(\bL_0 \stackrel{\lambda_b \alpha_b}{\to} \bL_1)$, $Cone(\bL_0 \stackrel{\lambda_a \alpha_a}{\to} \bL_1)$, and we believe that they are isomorphic to two singular fibers that pass through $z=a$ and $z=b$, respectively. Although a similar argument as in the proof of Proposition \ref{prop:teqcone} seem to go through, we do not spell this out here due to technical reasons.

Notice that the above Lagrangian submanifolds are all special, thus they are expected to be stable objects in the Fukaya category. Later in Section \ref{sec:fordefnc}, we will study their transformations into noncommutative resolution of the conifold $Y$ to give stable quiver representations.

\begin{remark}
By the equivalence in Theorem \ref{thm:cpumod}, Lagrangians spheres correspond to line bundle on the exceptional curve $C$ (or their shifts) as follows:
\begin{equation*}
\begin{array}{lclcl}
S_{m} &\mapsto& \mathcal{O}_C (m-1)  &\mbox{for}& m \geq 1 \\
S_{n} &\mapsto& \mathcal{O}_C (n-1) [1]  &\mbox{for}& n \leq 0.
\end{array}
\end{equation*}
One can easily check this comparing the cone relations in Proposition \ref{prop:smsmt} and exact sequences consisting of line bundles and skyscraper sheaves on $C$.
\end{remark}

\subsection{Mirror to perverse point sheaves}
In this section, we describe how torus fibers (intersecting $\bL_0$ and $\bL_1$) are affected by A-flop. We will see that they behave precisely in the same way as skyscraper sheaves supported at points in $C (\subset \hat{Y})$. Note that points in $\hat{Y}$ are mirror to torus fibers in SYZ point of view, and those in $C$ are mirror to torus fibers that intersect $\bL_i$. Thus it is natural to expect that those torus fibers are transformed to unstable objects (i.e. non-special Lagrangians) which can be written as  mapping cones analogous to \eqref{eqn:pervpt}.
\begin{figure}[h]
\begin{center}
\includegraphics[height=3in]{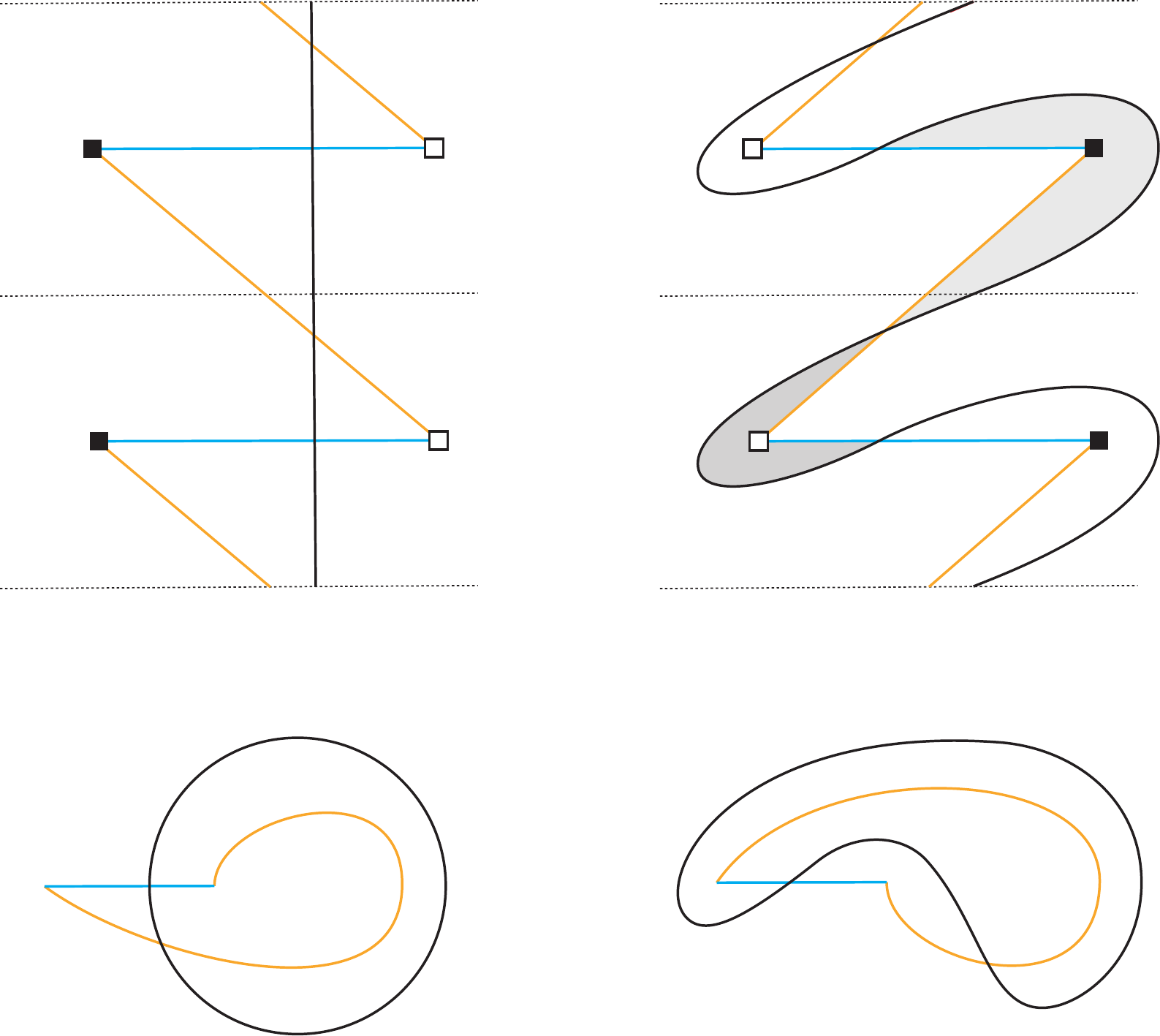}
\end{center}
\caption{Transformation of torus fibers under $\rho$}\label{fig:pervpta}
\end{figure}

\begin{prop}\label{prop:sesconeaf}
The functor $\rho_\ast$ sends $(L_c,\rho_z)$ to $Cone (\bL_0' \stackrel{\alpha'}{\to} \bL_1')$ for a degree $1$ morphism $\alpha'$.
\end{prop}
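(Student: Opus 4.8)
The plan is to transport the cone decomposition of $(L_c,U_{\rho_z})$ established in Proposition~\ref{prop:teqcone} across the symplectomorphism $\rho$, and then re-interpret the resulting object as a cone in $\mathcal{F}'$. Since $\rho_\ast$ is an equivalence of triangulated categories induced by a symplectomorphism, it sends exact triangles to exact triangles. Applying $\rho_\ast$ to the triangle $\bL_1 \to (L_c,U_{\rho_z}) \to \bL_0 \stackrel{[1]}{\to}$ from Proposition~\ref{prop:teqcone} immediately yields an exact triangle $\rho_\ast \bL_1 \to \rho_\ast(L_c,U_{\rho_z}) \to \rho_\ast \bL_0 \stackrel{[1]}{\to}$. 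By construction $\rho_\ast \bL_0 = \bL_0'$ and $\rho_\ast \bL_1 = \bL_1'$ (this is how $\bL_i'$ were defined, see Figure~\ref{fig:rotdc1}), so this reads $\bL_1' \to \rho_\ast(L_c,U_{\rho_z}) \to \bL_0' \stackrel{[1]}{\to}$, which is exactly the statement that $\rho_\ast(L_c,U_{\rho_z}) \cong Cone(\bL_0' \stackrel{\alpha'}{\to} \bL_1')$ for the connecting morphism $\alpha' \in \Hom^1(\bL_0',\bL_1')$.

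To make this concrete and to pin down $\alpha'$, I would first identify the image $\rho_\ast(L_c,U_{\rho_z})$ geometrically. The torus fiber $L_c$ has $T^2$-symmetry around $S$, so by the discussion in Section~\ref{subsec:AflopLag} its image under $\rho$ is again a $T^2$-symmetric Lagrangian whose $z$-projection is the image of the level curve $|z-c|=\text{const}$ under $\phi_{u_1,u_2,v_1,v_2}$; this is precisely the torus fiber $T^\dagger$ of $\pi^\dagger$ discussed in Example~\ref{subsubsec:deflocintro}. Figure~\ref{fig:pervpta} depicts exactly this transformation. The key geometric point is that, after the flop, the $z$-projection of the transformed torus winds in the opposite sense relative to the two matching paths of $\bL_0'$ and $\bL_1'$, which is why the roles of the two spheres in the cone are interchanged compared to the original decomposition $S_1 \# S_0$ versus $S_0 \# S_1$ noted in Example~\ref{subsubsec:deflocintro}.

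The degree-$1$ morphism $\alpha'$ can then be read off by the same Morse–Bott / pearl computation used in Lemma~\ref{lem:ratioalpha}: $\Hom^1(\bL_0',\bL_1')$ is again two-dimensional, spanned by the Poincar\'e duals $\alpha_a'$, $\alpha_b'$ of the two circles of clean intersection over the relevant $z$-values, and the cycle condition on the image of $P_1$ under $\rho_\ast$ forces a specific ratio $(\lambda_a':\lambda_b')$ governed by the areas of the two triangles in $X_{s=1}$ together with the holonomy $\rho_z$. I expect the cleanest route is to verify directly that $\rho_\ast P_0$ and $\rho_\ast P_1$ remain cycles with respect to the deformed differentials $m_1^{0,\alpha'}$, $m_1^{\alpha',0}$ and still satisfy $m_2(\rho_\ast P_1, \rho_\ast P_0) = C(\one_{\bL_0'}+\one_{\bL_1'})$ and $m_2(\rho_\ast P_0, \rho_\ast P_1) = C\,\one_{\rho_\ast L_c}$, exactly as in Proposition~\ref{prop:teqcone}; since $\rho$ is an honest symplectomorphism it preserves holomorphic polygons and their areas, so these identities transport with only bookkeeping changes.

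The main obstacle will be the careful tracking of gradings, orientations, and winding numbers under $\rho$. The subtlety flagged in the text is that $\rho$ is \emph{not} simply the coordinate swap $(u_1,v_1)\leftrightarrow(u_2,v_2)$: its effect on the $z$-plane is a genuine rotation that changes winding number with respect to $z=0$, so $\bL_0'$ behaves like an orientation reversal of $\bL_0$ while $\bL_1'$ picks up a change of winding number. One must therefore check that the gradings chosen on $\bL_0'$, $\bL_1'$, and $\rho_\ast(L_c,U_{\rho_z})$ are compatible so that the connecting morphism genuinely lands in degree $1$ and the triangle has the asserted form; getting the shift conventions consistent with the identifications $\bL_0' \mapsto \mathcal{O}_{C^\dagger}(-1)$, $\bL_1' \mapsto \mathcal{O}_{C^\dagger}(-2)[1]$ is where the real care is needed, and it should reproduce the B-side triangle~\eqref{eqn:pervpt} for the perverse point sheaf under the flop equivalence $\Phi$.
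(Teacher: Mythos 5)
Your proposal is correct and matches the paper's proof, which gives exactly the same two arguments: the categorical one (since $\rho_\ast$ is triangulated and sends $\bL_i$ to $\bL_i'$, it carries the cone decomposition of Proposition \ref{prop:teqcone} to $Cone(\bL_0' \to \bL_1')$) and the concrete one (repeat the computation of Lemma \ref{lem:ratioalpha}, noting that $\alpha'$ now occupies the outer, non-convex angles in the $z$-plane projection). Your additional remarks about gradings and the B-side triangle \eqref{eqn:pervpt} are consistent with, though more detailed than, what the paper records.
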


\begin{proof}
The proof is the same as that of Lemma \ref{lem:ratioalpha}, except that $\alpha'$ in $HF(\bL_0',\bL_1')$ now represent outer (non-convex) angle in the $z$-plane picture.
Altenatively, since $\rho_\ast$ sends cones to cones, and $\rho_\ast (\bL_0)= \bL_0'$, $\rho_\ast(\bL_1)=\bL_1'$, we have
$$\rho_\ast (L_c,\rho_z) = \rho_\ast (Cone (\bL_0 \stackrel{\alpha}{\to} \bL_1) ) = Cone (\rho (\bL_0) \stackrel{\rho_\ast (\alpha)}{\to} \rho(\bL_1)) = Cone (\bL_0' \stackrel{\alpha'}{\to} \bL_1')$$
where $\alpha'$ is a degree one morphism from $\bL_0'$ to $\bL_1'$, and occupies outer angles (after $z$-projection) as in Figure \ref{fig:pervpta}.


\end{proof}

\begin{remark}
Note that we have an exact triangle
$$\bL_1' \to \rho_\ast (L_c,\rho_z) \to \bL_0' \stackrel{[1]}{\to}$$
from Proposition \ref{prop:sesconeaf}. Thus $\bL_1'$ can be thought of as a subobject of $\rho_\ast (L_c,\rho_z)$. $\bL_1'$ has bigger phase than $\rho_\ast (L_c,\rho_z)$, which is another way to explain unstability of $\rho_\ast (L_c,\rho_z)$. 
\end{remark}

Likewise, flop sends most of Lagrangians spheres in $\{S_m : m\in \Z\}$ to non-special objects. 
In fact, it is easy to see from the picture that $S_0$ and $S_1$ are the only spheres in this family that remain special after A-flop. We conclude that the equivalence $\rho_\ast$ does not preserve the set of special Lagrangians, and hence the A-flop can be thought of as a nontrivial change of holomorphic volume form, while keeping the symplectic structure as its induced from a symplectomorphism. We will revisit this point of view in \ref{subsec:bridgelandfuk}.

\section{Non-commutative mirror functor for the deformed conifold and stability conditions}\label{sec:fordefnc}

So far, we have studied A-flop on the smoothing $X_{s=0}$ \eqref{eqn:xs01} of the conifold mostly in SYZ perspective comparing with its SYZ mirror, the resolve conifold (taken away a divisor) .
In this section we will consider a certain quiver algebra as another mirror to $X_{s=0}$, which is well-known to be a noncommutative crepant resolution of the conifold. 
The relation between noncommutative resolution and commutative one will be explained later (see Remark \ref{rem:noncommresc}).  The mirror quiver category will enable us to study stability conditions more explicitly.

There is a natural way to obtain the above quiver as a formal deformation space of the object $\mathbb{L}=\bL_0 \oplus \bL_1$ (recall $\bL_0 = S_0$ and $\bL_1=S_1$ are Lagrangian spheres with maximal/minimal phases) in the Fukaya category.  By the result in \cite{CHL}, such a construction comes with an $A_\infty$-functor from a Fukaya category to the category of quiver representations. We will construct geometric stability conditions using the functor, and examine A-flop on these stability conditions.

We begin with an explicit computation of the $A_\infty$-structure on $CF(\bL,\bL)$, which is crucial to describe formal deformation space of $\bL$.

\subsection{Floer cohomology of $\mathbb{L}$}\label{subsec:FCbL}
As our Lagrangian $\bL$ is given as a direct sum, $CF(\bL,\bL)$ consists of four components:
$$CF(\bL,\bL) = CF(\bL_0,\bL_0) \oplus CF(\bL_1,\bL_1) \oplus CF(\bL_0,\bL_1) \oplus CF(\bL_1, \bL_0). $$
The first two components are both isomorphic to the cohomology of the three-sphere as graded vector spaces, and hence have degree-0 and degree-3 elements only. 
These elements will not be used for formal deformations.  We only take degree-1 elements for deformations, so that the $\Z$-grading is preserved.

Recall that $\bL_0$ and $\bL_1$ intersect along two disjoint circles.
There are several computable models for $CF(\bL_0,\bL_1)$ and $CF(\bL_1,\bL_0)$ provided in \cite{Abou11}.  Explicit computation was given in \cite{CPU} using one of these models, which we spell out here.  

\begin{thm}(See \cite[Theorem 7.1]{CPU}.)\label{thm:sumainfs} The $A_\infty$-structure on $CF(\bL,\bL)$ are given as follows. As vector spaces, 
\begin{equation*}
\begin{array}{l}
CF(\bL_i,\bL_i) = \Lambda \langle \one_{\bL_i} \rangle \oplus \Lambda \langle [\pt]_{\bL_i} \rangle \,\,\,\, \mbox{for}\,\,\, i=0,1 \\
CF(\bL_0,\bL_1) = \Lambda \langle X \rangle \oplus  \Lambda \langle Z \rangle \oplus  \Lambda \langle \bar{Y} \rangle \oplus  \Lambda \langle \bar{W} \rangle \\
$$CF(\bL_1,\bL_0) =  \Lambda \langle Y \rangle \oplus  \Lambda \langle W \rangle \oplus  \Lambda \langle \bar{X} \rangle \oplus  \Lambda \langle \bar{Z} \rangle.
\end{array}
\end{equation*}
with degrees of generators given as
\begin{equation*}
\begin{array}{l}
\deg \one_{\bL_i} = 0, \quad \deg [\pt]_{\bL_i} = 3, \\
\deg X= \deg Y = \deg Z = \deg W = 1,\\
\deg \bar{X} = \deg \bar{Y} = \deg \bar{Z} =\deg \bar{W} =2.
\end{array}
\end{equation*}
We have $m_1\equiv0$ and $m_{\geq 4}\equiv0$. The only nontrivial operations are 
\begin{equation*}
\begin{array}{l}
-m_2(X, \bar{X} )= -m_2(Z,\bar{Z} ) = m_2 (\bar{Y},Y) = m_2(\bar{W},W) = [\pt]_{\bL_0}, \\
m_2(\bar{X},X )= m_2(\bar{Z},Z ) = -m_2 (Y,\bar{Y}) = -m_2(W,\bar{W}) = [\pt]_{\bL_1}, \\
m_3(X,Y,Z) =- m_3(Z,Y,X)= \bar{W}, \quad m_3(Y,Z,W) =- m_3(W,Z,Y)= \bar{X},\\
m_3(Z,W,X) =- m_3(X,W,Z)= \bar{Y}, \quad m_3(W,X,Y) =- m_3(Y,X,W)= \bar{Z}.
\end{array}
\end{equation*}
and those determined by the property of the unit $\one_{\bL_i}$.

\end{thm}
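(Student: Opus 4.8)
The statement is imported from \cite[Theorem 7.1]{CPU}, and the plan is to re-derive it intrinsically with the Morse--Bott model of \cite{FOOO} (or pearls \cite{BC-pearl,Sheridan-CY}), organizing the computation around the cyclic (Calabi--Yau) $A_\infty$-structure of dimension three carried by $\End(\bL)$. First I would read off the underlying graded vector spaces. Since $\bL_0,\bL_1\cong S^3$, the Morse--Bott model gives $CF(\bL_i,\bL_i)\cong H^*(S^3)$, producing $\one_{\bL_i},[\pt]_{\bL_i}$ in degrees $0,3$. The two spheres meet cleanly along the circles $S^1_a,S^1_b$ over $z=a,b$, so $CF(\bL_0,\bL_1)\cong H^*(S^1_a)[-1]\oplus H^*(S^1_b)[-1]$; with the gradings fixed as in the paragraph preceding the theorem this places $X,Z$ in degree $1$ and $\bar Y,\bar W$ in degree $2$, while Poincar\'e duality $CF(\bL_1,\bL_0)\cong CF(\bL_0,\bL_1)^\vee[-3]$ supplies $Y,W$ in degree $1$ and $\bar X,\bar Z$ in degree $2$.

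Next I would dispose of $m_1$ and $m_2$ by degree and duality. For $m_1=0$, the only candidate contributions are the Morse differentials on the clean circles, which vanish for the height function, together with strips joining $S^1_a$ to $S^1_b$; an area/grading bookkeeping in the $z$-plane excludes the latter, so the complex is already minimal. For $m_2$, degree considerations force every product of two of these generators either to vanish (for instance $m_2(X,Y)$ would land in the empty degree-$2$ part of $CF(\bL_i,\bL_i)$) or to land in $[\pt]$ exactly when the two generators lie over the same circle and are Poincar\'e dual. The resulting coefficients are the clean-intersection pairing, and the signs in the displayed relations record the chosen orientations and spin structures; this simultaneously fixes the Calabi--Yau pairing $\langle\,\cdot,\cdot\,\rangle$ on $\End(\bL)$ with $[\pt]$ as the volume, under which $\bar X,\bar Y,\bar Z,\bar W$ are dual to $X,Y,Z,W$.

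The crux is $m_3$. I would compute it by counting rigid holomorphic quadrilaterals (disks with four boundary marked points, three inputs and one output) whose boundary alternates on $\bL_0,\bL_1$. Using the Hamiltonian $T^2$-symmetry in the conic-fiber directions, these moduli reduce to holomorphic polygons in the $z$-plane base, where the matching paths of $S_0$ and $S_1$ cut out a configuration whose two quadrilateral regions assemble into the conifold superpotential $\Phi=XYZW-XWZY$. Concretely, the four cyclic relations $m_3(X,Y,Z)=\bar W$, $m_3(Y,Z,W)=\bar X$, $m_3(Z,W,X)=\bar Y$, $m_3(W,X,Y)=\bar Z$ are exactly the cyclic partial derivatives of $\Phi$ with respect to the pairing above, so by cyclic symmetry one need only evaluate a single orbit explicitly and then propagate. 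This is the step I expect to be the main obstacle: establishing transversality and regularity of the quadrilateral moduli, identifying the contributing base polygons unambiguously, and pinning down the relative signs such as $m_3(X,Y,Z)=-m_3(Z,Y,X)$, which require a careful orientation analysis --- most cleanly via Seidel's sign rule \cite{Seidel-g2} applied to the $z$-plane factor, as already invoked in the proof of Proposition \ref{prop:teqcone}.

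Finally, $m_{\geq 4}=0$ would follow from the same minimal-model viewpoint: the cyclic potential recovered from the products is exactly quartic, and an energy/degree count shows there are no rigid immersed $(k{+}2)$-gons on $S_0\cup S_1$ for $k\geq 4$, equivalently $\Phi$ contains no cyclic words of length $\geq 5$. Together with cyclicity of $\langle\,\cdot,\cdot\,\rangle$ this forces all higher products to vanish and identifies $\End(\bL)$ with the Ginzburg-type $A_\infty$-algebra of the conifold quiver with superpotential $\Phi$, exactly as stated.
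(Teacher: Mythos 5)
Your set-up (the graded pieces, the vanishing of $m_1$, and the $m_2$ products via the clean-intersection pairing and degree counting) matches the paper's pearl-model computation. The gap is in the $m_3$ step, which is also where the paper's proof does its real work. You propose to count rigid holomorphic quadrilaterals that reduce, via the $T^2$-symmetry, to quadrilateral regions in the $z$-plane cut out by the matching paths of $S_0$ and $S_1$. But those two matching paths meet only at the two points $a$ and $b$, so the only polygonal regions they bound downstairs are the two bigons between the segment $[a,b]$ and the arc; there are no quadrilateral regions, and a nonconstant quadrilateral with nontrivial $z$-plane projection would carry an area weight $T^{\omega}$, contradicting the statement (and the paper's explicit remark) that no $T$-powers appear in the $m_3$ relations. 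The configurations that actually contribute are pearl trajectories: two small bigons in the conic-fiber directions, one over $z=a$ with corners among $\{X,Y\}$ and one over $z=b$ with corners among $\{Z,W\}$, joined by a single gradient flow line of the Morse function inside $\bL_0$ or $\bL_1\cong S^3$. The key geometric input --- which your proposal is missing --- is that $S^1_a$ and $S^1_b$ form a Hopf link in $S^3$, so that $S^1_a$ meets the disk $W^-\left(S^1_b\right)\cup\{\max\}$ transversally in exactly one point; this is what makes the connecting trajectory exist, be unique, and be rigid, and hence what produces the coefficient $\pm 1$ in $m_3(X,Y,Z)=\bar W$ and its cyclic companions (degenerating to the Morse trees of Abouzaid's model in the clean-intersection limit). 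Without this linking argument there is no way to identify or count the contributing moduli.

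A secondary issue: your argument for $m_{\geq 4}\equiv 0$ is circular as stated. Saying that the cyclic potential recovered from the products is exactly quartic is just a restatement of the claim that $m_{\geq 4}$ vanishes on degree-one inputs, and pure degree counting does not immediately kill everything (for instance $m_4$ on three degree-one inputs and one degree-two input lands in degree $3$, where $[\pt]_{\bL_i}$ lives). One either has to check directly in the pearl/Morse--Bott model that no rigid configurations with four or more inputs exist --- again using the uniqueness of the Hopf-link trajectory, since any such configuration would require more connecting flow lines than are available --- or invoke cyclicity together with strict unitality to rule out the residual degree-allowed terms. Either way this step needs an argument rather than an appeal to the shape of $\Phi$.
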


In what follows, we take an alternative way to compute $A_\infty$-structure on $CF(\bL,\bL)$ hiring \emph{pearl trajectories}, which is more geometric in the sense that it shows explicitly the holomorphic disks (attached with Morse trajectories) contributing to the $A_\infty$-operations. This will also help us to have geometric understanding of various computations to be made later, although most of the proof will rely on algebraic arguments.

First we choose a generic Morse function $f_i$ on $\bL_i$ with minimum and maximum only for $i=0,1$. We denote these critical points by  $\one_{\bL_i}, [\pt]_{\bL_i}$ by obvious analogy, where $\deg \one_{\bL_i} = 0$ and $\deg\, [\pt]_{\bL_i} =3$. Then $CF(\bL_i,\bL_i)$ is defined to be the Morse complex of $f_i$, which is nothing but the 2-dimensional vector space generated by $\one_{\bL_i}, [\pt]_{\bL_i}$.

The Morse trajectories of $f_i$ are described as follows. Recall that the two Lagrangians $\bL_0$ and $\bL_1$ intersect along two disjoint circles which we denoted by $S^1_a$ and $S^1_b$ (lying over $z=a$ and $z=b$, respectively). We want to argue that generically, there is a unique gradient flow line in each $\bL_i$ that runs from $S^1_a$ to $S^1_b$ and vice versa. We may assume that there are no critical points on $S^1_a$ or $S^1_b$ by genericity.

Let $W^- \left( S^1_b \right)$ be the unstable manifold of $S^1_b$ with respect to the Morse function $f_i$ on $\bL_i$, namely
$$W^{-} \left( S^1_b \right) = \left\{ x\in \bL_0 \, : \, \varphi^t (x) \in S^1_b\,\,\, \mbox{for some} \,\,\, t\geq0 \right\} $$
where $\varphi^t$ is the flow of $\nabla f_i$. Including the maximum ($\one_{\bL_i}$ in our notation), the unstable manifold of $S^1_b$ is topologically a disk that bounds $S^1_b$. Now observe that two circles $S^1_a$ and $S^1_b$ form a Hopf link in $\bL_i$. Therefore, generically $S^1_a$ intersects the disk $W^- \left(S^1_b \right) \cup \{max\}$ at one point as shown in Figure \ref{fig:hopf}. Therefore we see that there is a unique trajectory flowing from $S^1_a$ to $S^1_b$. This is the only property of the Morse functions $f_0$ and $f_1$ which we will use later.


\begin{figure}[h]
\begin{center}
\includegraphics[height=2in]{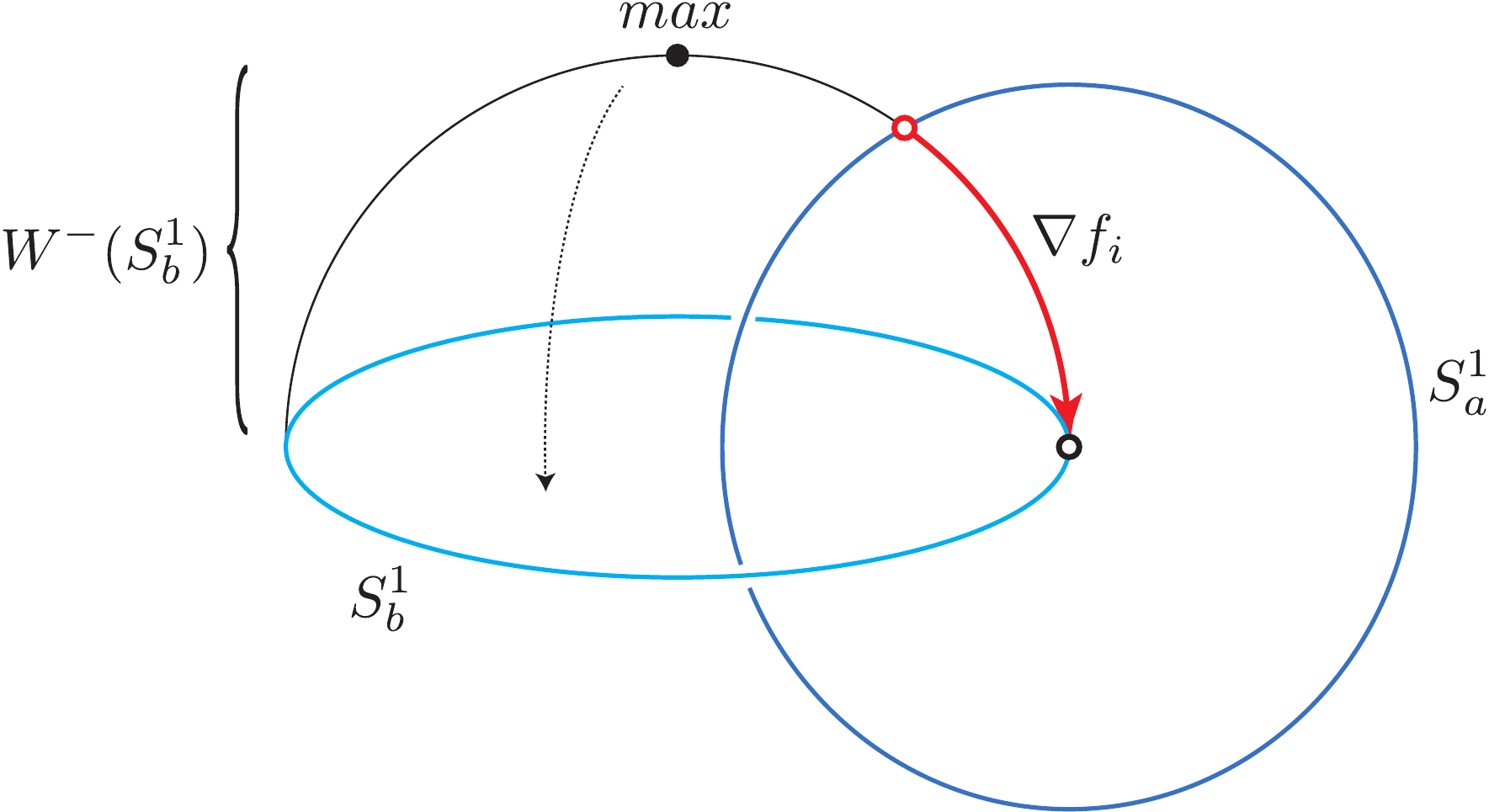}
\end{center}
\caption{A trajectory of the Morse function $f_i$ from $S^1_a$ to $S^1_b$}\label{fig:hopf}
\end{figure}

%

For the other components of $CF(\bL,\bL)$, we perturb $\bL_1$ in double conic fiber direction as in Figure \ref{fig:pearl1}, so that $\bL_0$ and $\bL_1$ intersect each other transversely at four different points after perturbation. Therefore, both $CF(\bL_0,\bL_1)$ and $CF(\bL_1,\bL_0)$ are generated by these four points, which we denote as follows.
$$CF(\bL_0,\bL_1) = \Lambda \langle X \rangle \oplus  \Lambda \langle Z \rangle \oplus  \Lambda \langle \bar{Y} \rangle \oplus  \Lambda \langle \bar{W} \rangle$$
$$CF(\bL_1,\bL_0) =  \Lambda \langle Y \rangle \oplus  \Lambda \langle W \rangle \oplus  \Lambda \langle \bar{X} \rangle \oplus  \Lambda \langle \bar{Z} \rangle$$
with degrees of generators given as
$$\deg X= \deg Y = \deg Z = \deg W = 1, \qquad \deg \bar{X} = \deg \bar{Y} = \deg \bar{Z} =\deg \bar{W} =2.$$ 
Here, $X$ and $\bar{X}$ are represented by the same point, but regarded as elements in $CF(\bL_0,\bL_1)$ and $CF(\bL_1,\bL_0)$ respectively, and similar for $Y,Z,W$. In fact, they can be thought of as Poincare dual to each other. Obviously, they are all cycles (i.e. $m_1$-closed) since opposite strips (pairs of strips on cylinders in Figure \ref{fig:pearl1}) cancel pairwise. Therefore $CF(\bL,\bL)$ comes with a trivial differential.

Now we are ready to spell out $A_\infty$-algebra structure on $CF(\bL,\bL)$ in terms of the above model. Recall from \cite{BC-pearl,Sheridan-CY} that $A_\infty$-operation counts the configurations which consist of several holomorphic disks (pearls) joined by gradient trajectories as shown in Figure \ref{fig:pearl1}.  The constant disk at $X$ (and $\bar{X}$) attached with flows to $[\pt]_{\bL_i}$ contributes as $m_2(X, \bar{X} )= [\pt]_{\bL_0}$. Similarly, we have
$$m_2(X, \bar{X} )= m_2(Z,\bar{Z} ) = -m_2 (\bar{Y},Y) = -m_2(\bar{W},W) = - [\pt]_{\bL_0},$$
$$m_2(\bar{X},X )= m_2(\bar{Z},Z ) = -m_2 (Y,\bar{Y}) = -m_2(W,\bar{W}) =  [\pt]_{\bL_1}.$$
Other $m_2$'s are either determined by properties of units $1_{\bL_i}$, or zero by degree reason. 

\begin{figure}[h]
\begin{center}
\includegraphics[height=2.2in]{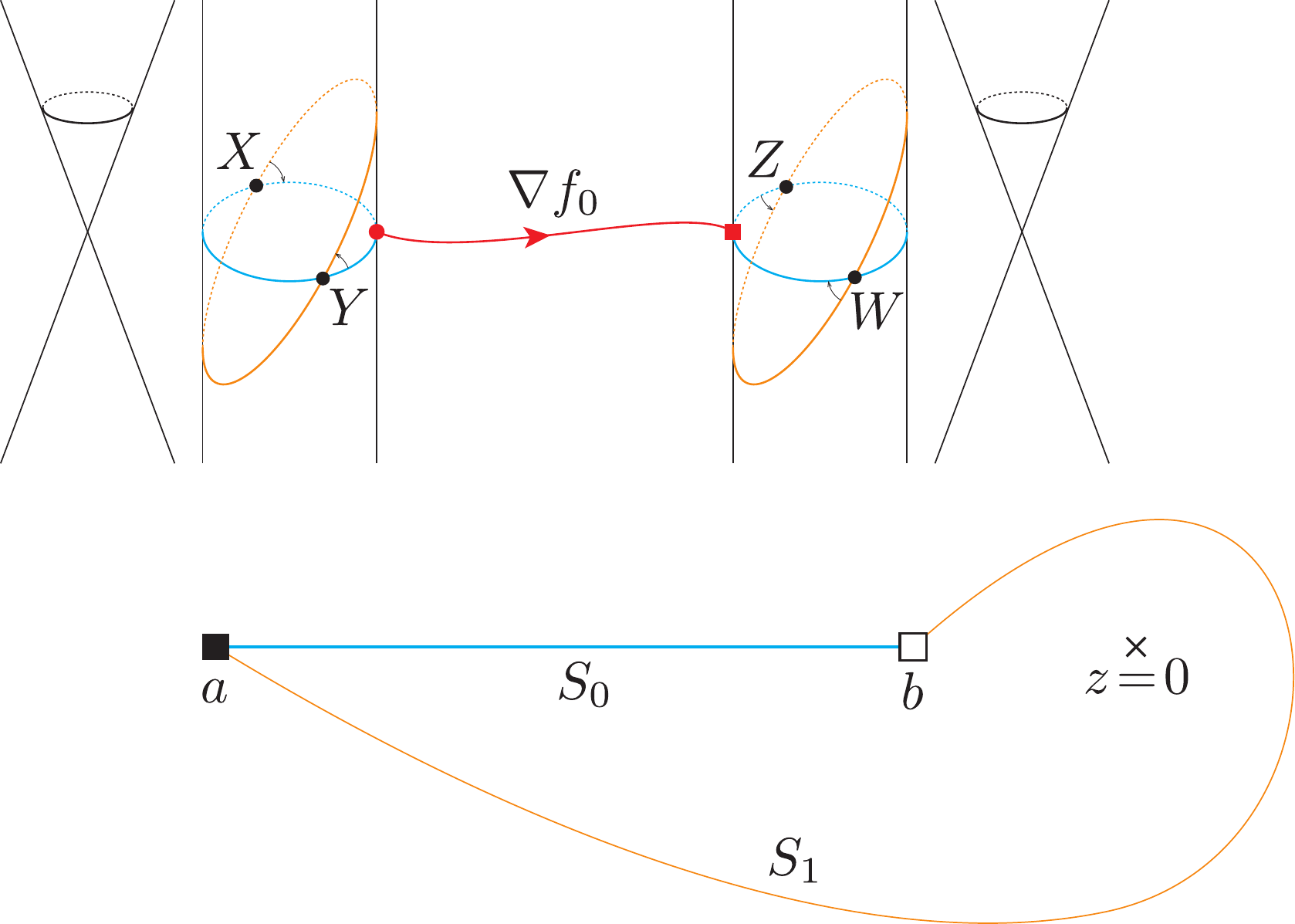}
\end{center}
\caption{Pearl trajectory contributing to $m_3(X,Y,Z)$}\label{fig:pearl1}
\end{figure}

Computation of $m_3$ involves more complicated pearl trajectories. We give an explicit picture for one of those trajectories, and the rest can be easily found in a similar way. In Figure \ref{fig:pearl1}, one can see a pearl trajectory consisting of two bigons connected by a gradient flow, which contributes to $m_3(X,Y,Z)$ with output $\bar{W}$. The red colored connecting flow in Figure \ref{fig:pearl1} is precisely the gradient trajectory in Figure \ref{fig:hopf}, and hence the corresponding moduli is isolated. The pearl trajectory degenerates into a Morse tree when perturbing $\bL_0$ back to the original position (see Figure \ref{fig:varmod}), which one of models in \cite{Abou11} takes into account.

\begin{figure}[h]
\begin{center}
\includegraphics[height=2in]{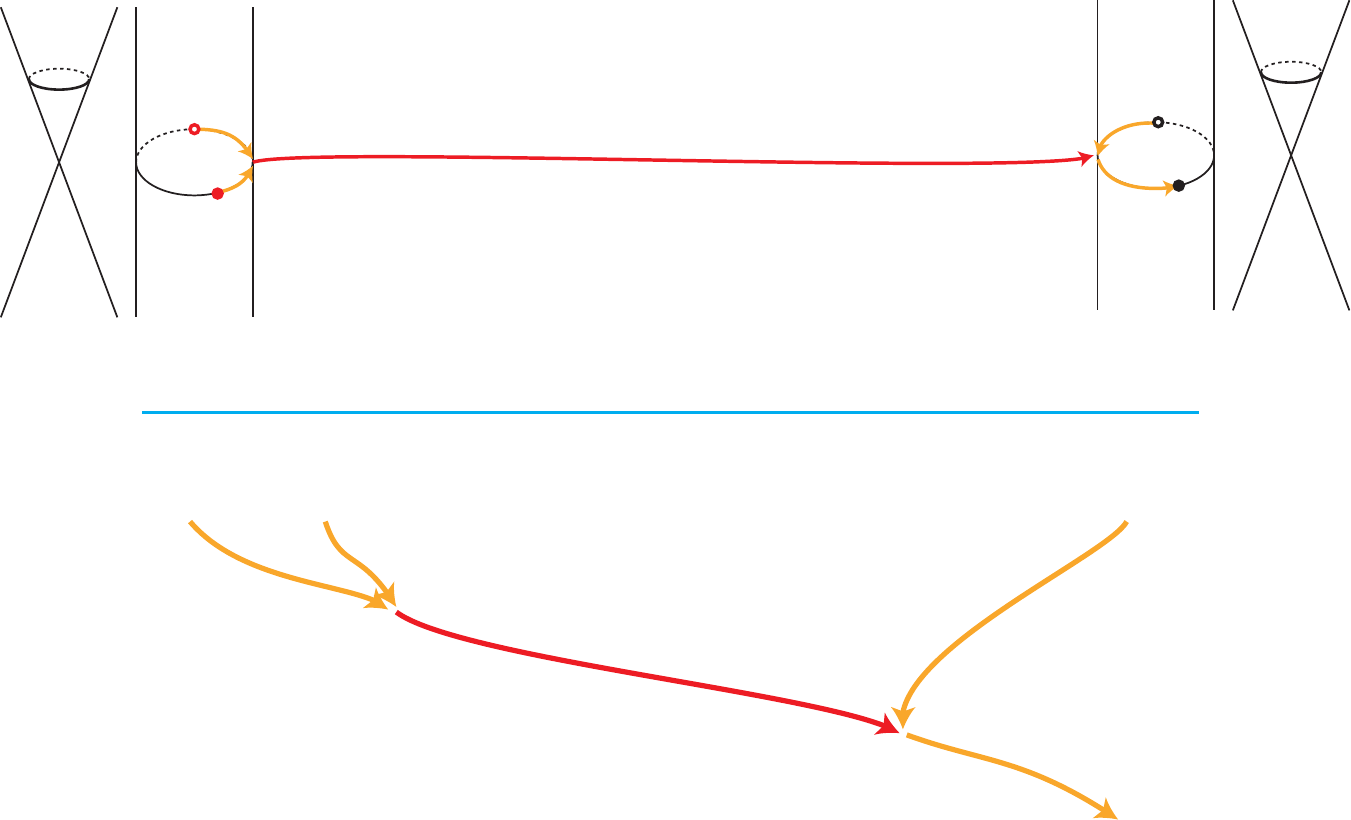}
\end{center}
\caption{Morse trees for $m_3(X,Y,Z)$}\label{fig:varmod}
\end{figure}


Consequently, we have the following complete list of nontrivial $m_3$-operations:
$$m_3(X,Y,Z) =- m_3(Z,Y,X)= \bar{W}, \quad m_3(Y,Z,W) =- m_3(W,Z,Y)= \bar{X},$$
$$ m_3(Z,W,X) =- m_3(X,W,Z)= \bar{Y}, \quad m_3(W,X,Y) =- m_3(Y,X,W)= \bar{Z}.$$
We remark that the symplectic area of a pearl trajectory becomes zero after taking limit back to the original clean intersection situation (so that it degenerates into a Morse tree), which explains why there are no $T$ appearing in the above computations. If one wants to keep working with the perturbed picture, one can simply rescale generators so that the coefficients of $m_3$ to be still $1$.

Note that the $A_\infty$-structure computed in this way precisely  coincides with the one given in Theorem \ref{thm:sumainfs}.

\subsection{Construction of mirror from $\bL$}

As in \cite{CHL2}, we take formal variables $x,y,z,w$ and consider the deformation parameter $b=xX + yY + zZ + wW$, and solve the following Maurer-Cartan equation:
\begin{equation}\label{eqn:MCbL}
 m_1(b) + m_2 (b,b) + m_3(b,b,b) + \cdots = 0.
\end{equation}
where $m_k$ with inputs involving $x,y,z,w$ are defined simply by pulling out the coefficient of Floer generators to the front, i.e.,
$$m_k ( x_1 X_1, \cdots, x_k X_k) = (-1)^\ast x_k x_{k-1} \cdots x_1 m_k (X_1, \cdots, X_k).$$
Here $(-1)^\ast$ is determined by usual Koszul sign convention, and in particular, is positive when $x_i X_i$ is one of $\{xX,yY,zZ,wW\}$.

\begin{remark}
In \cite{CHL2}, ``weak" Maurer-Cartan equations were mainly considered (i.e. the right hand side of \eqref{eqn:MCbL} replaced by $\lambda \cdot 1_\bL$), but in our example, weak Maurer-Cartan equation cannot have solutions unless $\lambda=0$ due to degree reason.
\end{remark}

By Theorem \ref{thm:sumainfs}, the Maurer-Cartan equation \eqref{eqn:MCbL} is equivalent to
$$( zyx-xyz) \bar{W} + ( wzy- yzw) \bar{X} + (  xwz-zwx) \bar{Y} + ( yxw- wxy) \bar{Z} =0.$$
Therefore, $b$ is a solution of \ref{eqn:MCbL} if and only if $(x,y,z,w)$ is taken from the path algebra (modulo relations) $\mathcal{A}$ of the following quiver with the potential:
\vspace{0.5cm}
\begin{equation*}
\xymatrix{Q: &v_0 \cdot \ar@/^0.7pc/[rr]^{z} \ar@/^1.5pc/[rr]^{x}&&\ar@/^0.7pc/[ll]^{y} \ar@/^1.5pc/[ll]^{w} \cdot v_1 &&\Phi=(xyzw)_{cyc} - (wzyx)_{cyc}}
\end{equation*}
where the vertex $v_i$ corresponds to the object $\bL_i$ and arrows correspond to degree 1 morphisms between $\bL_i$'s (or their associated formal variables). See \cite[Section 6]{CHL2} for more details. The quiver algebra has the following presentation:
\begin{equation} \label{eq:mir-A}
\mathcal{A}:= \dfrac{\Gamma Q}{ \langle \partial_x \Phi,\partial_y \Phi,\partial_z \Phi,\partial_w \Phi \rangle} = \dfrac{\Gamma Q}{\langle xyz-zyx, yzw-zwy,zwx-xwz, wxy-yxw\rangle}.
\end{equation}

\begin{remark}\label{rem:noncommresc}
$(Q,\Phi)$ is well-known to be the noncommutative crepant resolution of the conifold (see for instance \cite{Bergh}).  In fact, one can easily check that the subalgebra of $\cA$ consisting of loops based at one of vertices is isomorphic to the function algebra of the conifold. For e.g., if we set $\alpha = xy, \beta= xw, \gamma = zy, \delta=zw$, then the relations among $x,y,z,w$ force $\alpha,\beta,\gamma,\delta$ to commute with each other and to satisfy $\alpha \delta = \beta \gamma$.
\end{remark}

By \cite{CHL2}, we have an (trianglulated) $A_\infty$-functor from the Fukaya category to $D^b {\rm Mod} \, \mathcal{A}$ (here, $D^b {\rm Mod} \, \mathcal{A}$ denotes dg-enhanced triangulated category)
$$ \widetilde{\Psi}^\bL : D^b \mathcal{F} \to D^b {\rm Mod} \, \mathcal{A}.$$
In the rest of the section, we will use the degree shift $\Psi:= \widetilde{\Psi}^\bL [3]$ instead of $ \widetilde{\Psi}^\bL$ itself, in order to have the images of the geometric objects in $D^b \mathcal{F}$ lying in the standard heart ${\rm Mod} \, \mathcal{A}$ of $D^b {\rm Mod} \, \mathcal{A}$. This point will be clearer after the computation of $\Psi(S_i)$ in the next section. We will also see that the functor is an equivalence onto a certain subcategory of $D^b {\rm Mod}\, \mathcal{A}$.

%




\subsection{Transformation of $\bL_0$ and $\bL_1$ and their central charges}
We first compute the images of $\bL_0$ and $\bL_1$ themselves under $\Psi$. $\Psi(\bL_0)$ is simply a chain complex over $\mathcal{A}$ given by $CF(\bL,\bL_0)$, which is a direct sum
\begin{equation*}
\begin{array}{lcl}
CF(\bL,\bL_0) &=& CF(\bL_0,\bL_0) \oplus CF(\bL_1,\bL_0) \\
&=& \langle \one_{\bL_0}, [\pt]_{\bL_0} \rangle \oplus \langle Y,W,\bar{X},\bar{Z} \rangle
\end{array}
\end{equation*} 
with a differential $ m_1^b$.
Recall that $m_1^b (p) = \sum_k m_k (b,\cdots,b,p)$.

We have already made essential computations for $m_1^b$ (see Theorem \ref{thm:sumainfs}).
Using the previous computation, the list of $m_1^b$ acting on the generators is given as follows:
\begin{equation*}
\begin{array}{lcl}
m_1^b (\one_{\bL_0}) &=& y Y + w W \\
m_1^b (Y) &=& xw \bar{Z}  -  zw \bar{X}  \\
m_1^b (W) &=& zy \bar{X}  - xy \bar{Z}  \\
m_1^b (\bar{X}) &=& x [\pt]_{\bL_0} \\
m_1^b (\bar{Z}) &=& z [\pt]_{\bL_0} \\
m_1^b ([\pt]_{\bL_0}) &=& 0
\end{array}
\end{equation*}

Therefore $[\pt]_{\bL_0}$ is the only nontrivial class in the cohomology. Moreover, $x [\pt]_{\bL_0}$ and $z [\pt]_{\bL_0}$ are zero in the cohomology, and hence we obtain a finite dimensional representation of $(Q,\Phi)$ over $\C$. 
Consequently,  $m_1^b$-cohomology of $\Psi (\bL_0) \in D^b {\rm Mod}\, \mathcal{A}$ is 1-dimensional vector space (over $\C$) supported over the vertex $v_0$, generated by the class $[\pt]_{\bL_0}$. We remark that this vector space sits in degree 0 part due to the shift $\Psi = \WT{\Psi}^\bL [3]$.

Likewise, $\Psi (\bL_1)$ gives a $1$-dimensional $\C$-vector space supported over $v_1$ after taking $m_1^b$-cohomology. In particular, the image of $\Psi$ lies in a subcategory $D^b {\rm mod}\, \mathcal{A}$ consisting of objects with finite dimensional cohomology.

\begin{thm}\label{thm:loccatid}
$\Psi :  D^b \mathcal{F} \to D^b {\rm mod} \,\mathcal{A}$ is a fully faithful embedding, which sends $\bL_0$ an $\bL_1$ to their corresponding vertex simples.
Moreover, it is an equivalence onto $D^b_{\rm nil} {\rm mod} \, \mathcal{A}$, the full subcategory of $D^b{\rm mod} \, \mathcal{A}$ consisting of objects with nilpotent cohomologies.
\end{thm}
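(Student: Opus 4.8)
The plan is to first reduce full faithfulness to a finite computation on the generators $\bL_0,\bL_1$, and then separately identify the essential image with $D^b_{\rm nil}{\rm mod}\,\mathcal{A}$.

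Recall that $\Psi=\widetilde{\Psi}^{\bL}[3]$ is the triangulated functor of \cite{CHL2} sending an object $L$ to the complex $(CF(\bL,L),m_1^b)$; the computation preceding the theorem shows that $\Psi(\bL_0)$ and $\Psi(\bL_1)$ have one-dimensional cohomology supported at the vertices $v_0$ and $v_1$, i.e. they are the vertex-simple modules $S_{v_0},S_{v_1}$ of $\mathcal{A}$. Since $\mathcal{F}$ is generated by $\bL_0,\bL_1$, the category $D^b\mathcal{F}$ is generated by them as a triangulated category, so the essential image of the exact functor $\Psi$ is the triangulated subcategory of $D^b{\rm mod}\,\mathcal{A}$ generated by $S_{v_0},S_{v_1}$. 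To prove full faithfulness I would run the standard two-variable d\'evissage: the full subcategory of those $E$ for which $\Psi\colon\Hom(\bL_i,E[k])\to\Hom(S_{v_i},\Psi E[k])$ is an isomorphism for all $k$ is triangulated (five-lemma applied to the long exact sequences of a distinguished triangle) and contains the generators, hence is all of $D^b\mathcal{F}$; a second such argument varying the first variable then upgrades this to full faithfulness. Thus it suffices to check that $\Psi$ induces an isomorphism $\Hom_{D^b\mathcal{F}}(\bL_i,\bL_j[k])\xrightarrow{\sim}\mathrm{Ext}^k_{\mathcal{A}}(S_{v_i},S_{v_j})$ for $i,j\in\{0,1\}$ and all $k$, equivalently that $\Psi$ is a quasi-isomorphism of the $A_\infty$-algebras $\End_{\mathcal{F}}(\bL_0\oplus\bL_1)\to\mathrm{RHom}_{\mathcal{A}}(S_{v_0}\oplus S_{v_1},S_{v_0}\oplus S_{v_1})$.

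The heart of the argument, and the step I expect to be the main obstacle, is computing the right-hand $\mathrm{Ext}$-algebra from the quiver with potential $(Q,\Phi)$ and matching it against Theorem \ref{thm:sumainfs}. Using the minimal (Koszul-type) resolution of the vertex simples of a three-Calabi--Yau Jacobi algebra, $\mathrm{Ext}^1(S_{v_i},S_{v_j})$ is spanned by the arrows $i\to j$, $\mathrm{Ext}^2(S_{v_i},S_{v_j})$ by the relations $\partial_a\Phi$, and $\mathrm{Ext}^3(S_{v_i},S_{v_j})\cong\mathrm{Ext}^0(S_{v_j},S_{v_i})^{\vee}$ by the $CY_3$ self-duality. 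For the conifold quiver this gives $\mathrm{Ext}^*(S_{v_i},S_{v_i})\cong H^*(S^3)$, while $\mathrm{Ext}^*(S_{v_0},S_{v_1})$ is concentrated in degrees $1$ and $2$, each two-dimensional, matching the generators $X,Z$ and $\bar{Y},\bar{W}$ of $HF^*(\bL_0,\bL_1)$ on the nose (and symmetrically for $S_{v_1},S_{v_0}$). It then remains to match the products: the Yoneda pairing $\mathrm{Ext}^1\otimes\mathrm{Ext}^2\to\mathrm{Ext}^3$ reproduces the $m_2$-relations such as $m_2(\bar{X},X)=[\pt]_{\bL_1}$, while the fact that the relations have word-length three forces a nonzero triple product $\mathrm{Ext}^1\otimes\mathrm{Ext}^1\otimes\mathrm{Ext}^1\to\mathrm{Ext}^2$ governed by the quartic potential $\Phi=(xyzw)_{cyc}-(wzyx)_{cyc}$, reproducing exactly the operations $m_3(X,Y,Z)=\bar{W}$, etc. Since $\mathcal{A}$ was built in \cite{CHL2} precisely as the deformation (Koszul-dual) algebra of $\bL$, this matching is in principle tautological; the genuine work is to verify that the comparison map actually induced by $\Psi$ is the one realizing the duality, and that it closes up in degree $3$ with no higher corrections, for which the $CY_3$/Poincar\'e-duality structure on $\bL_i\cong S^3$ is essential.

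Finally I would identify the triangulated subcategory generated by the vertex simples with $D^b_{\rm nil}{\rm mod}\,\mathcal{A}$. One inclusion is immediate since $S_{v_0},S_{v_1}$ have nilpotent cohomology. Conversely, any object of $D^b_{\rm nil}{\rm mod}\,\mathcal{A}$ has finite-dimensional cohomology modules on which the arrows act nilpotently; each such module therefore admits a finite filtration by its radical series whose subquotients are among $S_{v_0},S_{v_1}$, so by truncation the object is obtained from shifts of $S_{v_0},S_{v_1}$ by finitely many cones. Hence the triangulated subcategory generated by the vertex simples is exactly $D^b_{\rm nil}{\rm mod}\,\mathcal{A}$, and the fully faithful functor $\Psi$ restricts to an equivalence onto it.
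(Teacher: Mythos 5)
Your outline is correct in structure, but the route through the key step is genuinely different from the paper's. Where you propose to compute $\mathrm{Ext}^*_{\cA}(S_{v_i},S_{v_j})$ intrinsically from the Koszul-type resolution of the vertex simples of the $3$-Calabi--Yau Jacobi algebra and then match the full $A_\infty$-structure (including the $m_3$ governed by the quartic potential) against Theorem \ref{thm:sumainfs}, the paper short-circuits both halves of that comparison: it quotes \cite[Theorem 6.10]{CHL2}, which guarantees that the morphism-level functor $\Psi_1$ is \emph{injective} on cohomology for any localized mirror functor of this type, and then only needs the dimension counts $\dim\mathrm{Ext}(\Psi(\bL_0),\Psi(\bL_0))=2$ and $\dim\mathrm{Ext}(\Psi(\bL_0),\Psi(\bL_1))=4$, which it imports from the B-side via Van den Bergh's equivalence $D^b_{\rm nil}{\rm mod}\,\cA\cong\bD_{\hat{Y}/Y}$ and the computation of $\End(\mathcal{O}_C(-1)[1]\oplus\mathcal{O}_C)$ in \cite[Section 5]{CPU}. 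Injectivity plus equality of dimensions gives the isomorphism with no need to track the comparison map or any products. Your approach buys independence from the B-side input, and your radical-filtration argument for why the vertex simples generate $D^b_{\rm nil}{\rm mod}\,\cA$ is more explicit than the paper, which leaves essential surjectivity implicit.

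The one soft spot is exactly the step you flag as ``the genuine work'': without the injectivity statement from \cite[Theorem 6.10]{CHL2}, a dimension count does not suffice, and you must actually verify that the map induced by $\Psi$ on $\mathrm{Ext}^1$ and $\mathrm{Ext}^2$ is nondegenerate rather than, say, zero in some component. Your proposal correctly identifies this obligation but does not discharge it. The cheapest repair is to import the injectivity from \cite{CHL2}, which is a formal feature of the localized mirror functor construction; with that in hand, your Koszul computation of the graded dimensions of $\mathrm{Ext}^*(S_{v_i},S_{v_j})$ replaces the paper's appeal to \cite{VdB} and \cite{CPU}, and the matching of the $m_2$ and $m_3$ products becomes unnecessary for the theorem itself (though it is a worthwhile consistency check of the $3$-CY duality).
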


\begin{proof}
As  $D^b_{\rm nil} {\rm mod} \, \mathcal{A}$ is a full subcategory of $D^b {\rm mod} \,\mathcal{A}$, it suffices to prove that 
$$ \Psi : D^b \mathcal{F} \to D^b_{\rm nil} {\rm mod} \,\mathcal{A}$$
is an equivalence. Since the image of generators $\bL_0$ and $\bL_i$ are vertex simples which are nilpotent over $\mathcal{A}$, $\Psi$ lands on $D^b_{\rm nil} {\rm mod} \,\mathcal{A}$.

We  prove that the morphism level functor on $\hom (\bL_i, \bL_j)$ ($i,j=0,1$) induces isomorphisms of cohomology groups. Without loss of generality, it is enough to consider $\hom (\bL_0,\bL_0)$ and $\hom(\bL_0,\bL_1)$. By \cite[Theorem 6.10]{CHL2}, we know that both
$$ \Psi_1 : \hom (\bL_0, \bL_0) \to \hom (\Psi (\bL_0), \Psi (\bL_0))$$
$$ \Psi_1 : \hom (\bL_0, \bL_1) \to \hom (\Psi (\bL_0), \Psi (\bL_1))$$
induce injective maps on the level of cohomology. Thus, it is enough to check that 
$$\dim {\rm Ext} (\Psi (\bL_0), \Psi (\bL_0)) = 2 \qquad \mbox{and} \qquad \dim {\rm Ext} (\Psi (\bL_0), \Psi (\bL_1)) = 4.$$
On the other hand, it is known by \cite{VdB} (see \cite{Szen} also) that $\bD^b_{\rm nil} {\rm mod}\,\mathcal{A}$ is equivalent to $\bD_{\hat{Y}/Y}$ \eqref{eq:D/Y} with vertex simples  corresponding to $\mathcal{O}_{C}$ and $\mathcal{O}_C (-1)[1]$. Therefore, the computation of endomorphisms of $\mathcal{O}_C \oplus \mathcal{O}_C (-1)[1]$ due to \cite[Section 5]{CPU} finishes the proof.
\end{proof}
From now on, we take $D^b_{\rm nil} {\rm mod} \, \mathcal{A}$ to be the target category of $\Psi$.

Let $z_i$ be the central charge of $\bL_i$, namely $z_i:=\int_{\bL_i} \Omega$.
We define a central charge on quiver representations as follows. For a representation $V:=(V_0,V_1)$ of $(Q,\Phi)$ (with some maps between $V_0$ and $V_1$ which we omitted),
\begin{equation}\label{eqn:zstabquiv}
Z(V):= z_0 \dim V_0 + z_1 \dim V_1.
\end{equation}
We define a dimension vector of a representation $V$ by $\underline{\dim} (V) := (\dim V_0, \dim V_1) \in \Z_{\geq 0}^2$ for later use.
For general objects in $D^b {\rm mod} \mathcal{A}$, $V_i$ above should be replaced by the corresponding cohomology.

\begin{prop}\label{prop:centpresf}
The object level functor $\Psi_0 : Obj \left(D^b \mathcal{F} \right) \to Obj \left(D^b_{\rm nil} {\rm mod} \mathcal{A} \right)$ is a central charge preserving map.
\end{prop}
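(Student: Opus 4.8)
The plan is to recognize both central charges as group homomorphisms out of the Grothendieck group and to pin them down by their values on the two generators $\bL_0$ and $\bL_1$, where they agree by construction. First I would invoke Theorem~\ref{thm:loccatid}: since $\Psi$ is a triangulated equivalence, it induces an isomorphism $K_0(\Psi):K_0(D^b\mathcal{F})\xrightarrow{\cong}K_0(D^b_{\rm nil}{\rm mod}\,\mathcal{A})$, and by the same theorem $\Psi(\bL_0)$, $\Psi(\bL_1)$ are the two vertex simples, so $K_0(\Psi)$ carries $[\bL_i]$ to the class of the simple at $v_i$. On the quiver side the central charge $Z(V)=z_0\dim V_0+z_1\dim V_1$ is manifestly the pairing of the dimension vector $\underline{\dim}(V)$ with $(z_0,z_1)$; since $\underline{\dim}$ is additive over exact sequences (it is precisely the $K_0$-class written in the basis of vertex simples), $Z$ is a homomorphism $K_0(D^b_{\rm nil}{\rm mod}\,\mathcal{A})\to\C$ with $Z(\text{simple at }v_i)=z_i$.

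Next I would verify that the $A$-side central charge is likewise a homomorphism that evaluates to $z_i$ on $[\bL_i]$. Because $\Omega=d\log\bar{z}\wedge du_1\wedge du_2$ is closed on $X_{s=0}$, the period of a compact oriented Lagrangian depends only on its class in $H_3(X_{s=0})$, i.e. $\int_L\Omega=\langle[\Omega],[L]\rangle$. The assignment sending a geometric object to its fundamental homology class is additive over the exact triangles realized by Lagrangian surgery and reverses sign under the shift $[1]$ (orientation reversal of the $z$-component), matching $[E[1]]=-[E]$; hence it is compatible with the relations in $K_0(D^b\mathcal{F})$, and $\int_{(-)}\Omega$ factors through $K_0(D^b\mathcal{F})\to\C$. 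Concretely, the triangle $\bL_1\to L_c\to\bL_0\xrightarrow{[1]}$ of Proposition~\ref{prop:teqcone} gives $[L_c]=[\bL_0]+[\bL_1]$, consistent with the surgery presentation $L_c\cong S_1\#S_0$ of~\ref{subsubsec:deflocintro}; since surgery adds homology classes, $\int_{L_c}\Omega=z_0+z_1$, which matches $\underline{\dim}\,\Psi(L_c)=(1,1)$.

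With both central charges recognized as homomorphisms out of $K_0$, the proposition reduces to the identity $Z\circ K_0(\Psi)=\int_{(-)}\Omega$ of homomorphisms on $K_0(D^b\mathcal{F})$. As $[\bL_0]$ and $[\bL_1]$ generate this group (every geometric object is built from $\bL_0,\bL_1$ by iterated cones, cf. Theorem~\ref{thm:main5cf} and Propositions~\ref{prop:teqcone}, \ref{prop:smsmt}), it suffices to check the identity on these two classes, where both sides are $z_0$ and $z_1$ by the very definition $z_i=\int_{\bL_i}\Omega$. Evaluating at an arbitrary object $E$ then yields $Z(\Psi(E))=\langle K_0(\Psi)[E],(z_0,z_1)\rangle=\int_E\Omega$, which is the asserted central-charge preservation.

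I expect the only substantive point to be the $A$-side additivity in the middle paragraph: one must ensure that the identification of each mapping cone with a Lagrangian surgery is compatible with homology classes, keeping careful track of orientations and of the grading shifts $[1]$ (each of which negates the contribution to $H_3$). Once the exact triangles of Propositions~\ref{prop:teqcone} and~\ref{prop:smsmt} are read as relations in $K_0$ and cross-checked against the dimension vectors of the mirror objects, everything else is formal, resting on the closedness of $\Omega$ and the equivalence $\Psi$.
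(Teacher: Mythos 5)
Your proposal is correct and follows essentially the same route as the paper's proof: verify the identity on the generators $\bL_0,\bL_1$ (where $\Psi$ sends them to the vertex simples with central charges $z_0,z_1$ by definition), then conclude by additivity of both central charges over exact triangles, i.e.\ by viewing them as homomorphisms out of $K_0$ and using that $\Psi$ is triangulated. You merely make explicit the point the paper leaves implicit --- that the period $\int_{(-)}\Omega$ factors through $K_0(D^b\mathcal{F})$ because $\Omega$ is closed and the cone/surgery relations are additive on homology classes --- which is a reasonable elaboration rather than a different argument.
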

\begin{proof}
The statement is obviously true for $\bL_0$ and $\bL_1$ as they are mapped to modules with 1-dimensional cohomology supported at the corresponding vertices. Since the central charges on both sides are additive (i.e. they are the maps from the $K$-groups) and $\Psi$ is a triangulated functor, the statement directly follows.
\end{proof}

In particular, special Lagrangians $\bL_0$ and $\bL_1$ are sent to simple and hence stable objects on quiver side. 

\subsection{Stables on quiver side}
Set $\zeta_i$ to be the argument of $z_i$ taken in  $(0,\pi]$  for $i=0,1$. Since $\bL_0$ and $\bL_1$ are special Lagrangians, $\zeta_i$ is nothing but the phase of $\bL_i$. According to our convention (see the discussion below Theorem \ref{thm:main5cf}), we have $\zeta_0 > \zeta_1$. 

Nagao and Nakajima used the following notion of stability for the abelian category ${\rm mod} \mathcal{A}$. 

\begin{defn}\label{defn:abelnnst} 
We define stability of quiver representations of $(Q,\Phi)$ as follows.
\begin{enumerate}
\item
We define the phase function $\zeta$ on ${\rm mod}\,\mathcal{A}$ by
$$\zeta(V) = \zeta_0 \dim V_0+ \zeta_1 \dim V_1 \in \R$$
for $V \in {\rm mod}\,\mathcal{A}$.
\item
An object $V$ of ${\rm mod} \mathcal{A}$ is said to be stable (semistable resp.) if for any subobject $W$ of $V$,
$$\zeta (W) < \zeta ( V) \quad (\zeta(W) \leq \zeta(V) \,\,\it{resp.}).$$
\end{enumerate}
\end{defn}

It is elementary to check that $\arg Z(V) \in (0,\pi]$ for $V \in {\rm mod}\, \cA$  induces the equivalent stability on the abelian category ${\rm mod} \mathcal{A}$ as Definition \ref{defn:abelnnst}. In fact, one can easily check that two quantities have the same ordering relations. 
\begin{lemma}
For $V ,W \in {\rm mod} \, \mathcal{A}$, $\zeta (V) \leq \zeta (W)$ if and only if $\arg Z(V) \leq \arg Z(W)$.
\end{lemma}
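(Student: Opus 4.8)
The plan is to reduce both inequalities to a single numerical comparison between the two dimension vectors. Write $(p,q)=(\dim V_0,\dim V_1)$ and $(r,s)=(\dim W_0,\dim W_1)$, so that $Z(V)=pz_0+qz_1$, $Z(W)=rz_0+sz_1$, while $\zeta(V)=p\zeta_0+q\zeta_1$ and $\zeta(W)=r\zeta_0+s\zeta_1$. First I would record that, since $z_0,z_1$ have arguments $\zeta_0,\zeta_1\in(0,\pi]$, every nonzero $\Z_{\geq 0}$-combination again lies in the closed upper half-plane off the positive real axis; hence $\arg Z(V),\arg Z(W)\in(0,\pi]$ are well defined and their difference lies in $(-\pi,\pi)$.

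Because that difference lies in $(-\pi,\pi)$, the angular order is detected by a single imaginary part: $\arg Z(V)\leq\arg Z(W)$ holds exactly when $\mathrm{Im}\big(\overline{Z(W)}\,Z(V)\big)\leq 0$, using $\mathrm{sign}\,\sin\delta=\mathrm{sign}\,\delta$ for $\delta\in(-\pi,\pi)$. Expanding the product and discarding the real terms $pr|z_0|^2$ and $qs|z_1|^2$, I would compute $\mathrm{Im}\big(\overline{Z(W)}\,Z(V)\big)=(rq-sp)\,\mathrm{Im}(\overline{z_0}z_1)$, and note $\mathrm{Im}(\overline{z_0}z_1)=|z_0||z_1|\sin(\zeta_1-\zeta_0)<0$ since $\zeta_1-\zeta_0\in(-\pi,0)$. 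Therefore $\arg Z(V)\leq\arg Z(W)$ is equivalent to $rq-sp\geq 0$, i.e. to $ps\leq qr$.

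The $\zeta$-side is then handled identically: comparing the phases reduces, after clearing the positive denominators $p+q$ and $r+s$, to $(\zeta_0-\zeta_1)(qr-ps)\geq 0$, which since $\zeta_0>\zeta_1$ is again exactly $ps\leq qr$. As both orderings are governed by the sign of the same determinant $qr-ps=\det\begin{pmatrix} q & p \\ s & r\end{pmatrix}$, they coincide, which is the assertion of the lemma.

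The step I expect to require the most care is not the generic computation but the bookkeeping of the degenerate cases: when one of $p,q,r,s$ vanishes the relevant argument sits exactly at an endpoint $\zeta_0$, $\zeta_1$, or $\pi$, and I must check that the imaginary-part criterion still correctly reports equality versus strict inequality there (the computation above does cover these, since $\delta\in(-\pi,\pi)$ is retained throughout, but it deserves an explicit sentence). I would also be explicit that the quantity entering Definition \ref{defn:abelnnst} as a stability comparison is the normalized phase on a fixed dimension ray, so that the equivalence is stated for exactly the quantity used in the stability test; the additivity of $Z$ and of $\dim$ across a short exact sequence, together with the see-saw property, then transfers the comparison between a subobject and its quotient with no change, giving the claimed equivalence of the two stability notions.
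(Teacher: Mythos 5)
Your argument is correct, and since the paper explicitly omits the proof of this lemma (``The proof is elementary, and we omit here''), there is no in-paper argument to compare against; your computation is a complete, self-contained verification. The reduction of both orderings to the sign of the single determinant $qr-ps$ is exactly the right mechanism: the cross-term identity $\mathrm{Im}\bigl(\overline{Z(W)}\,Z(V)\bigr)=(rq-sp)\,\mathrm{Im}(\overline{z_0}z_1)$ is correct, the sign $\mathrm{Im}(\overline{z_0}z_1)=|z_0||z_1|\sin(\zeta_1-\zeta_0)<0$ uses precisely the standing convention $\zeta_0>\zeta_1$ with $\zeta_i\in(0,\pi]$, and the preliminary observation that $\arg Z$ is well defined in $(0,\pi]$ on nonzero objects (so that the difference of arguments lies in $(-\pi,\pi)$ and $\mathrm{sign}\,\sin\delta=\mathrm{sign}\,\delta$) is the one point that genuinely needs to be said. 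One remark deserves emphasis rather than a parenthetical: the lemma is \emph{only} true for the normalized phase $\zeta(V)=\bigl(\zeta_0\dim V_0+\zeta_1\dim V_1\bigr)/(\dim V_0+\dim V_1)$, not for the unnormalized expression literally written in Definition \ref{defn:abelnnst}. Indeed, with the unnormalized $\zeta$ and $W=V\oplus V$ one has $\zeta(V)<\zeta(W)$ while $\arg Z(V)=\arg Z(W)$, so $\arg Z(W)\leq\arg Z(V)$ holds but $\zeta(W)\leq\zeta(V)$ fails (and the unnormalized definition would also make every proper submodule destabilizing-free, i.e.\ every object stable). You flag this in your closing paragraph and compute with the normalized slope, which is the reading consistent with Nagao--Nakajima; I would promote that sentence to the start of the proof so the statement being proved is unambiguous. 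The edge cases you worry about (one of $p,q,r,s$ equal to zero) are indeed already covered by your computation, since nothing is divided by $p,q,r,s$ individually and $\delta\in(-\pi,\pi)$ is maintained throughout; only $V=0$ or $W=0$ must be excluded, as $\arg Z$ is then undefined.
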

The proof is elementary, and we omit here. 
In particular, the set of stable objects remains the same even if we use $\arg Z(V)$ in place of $\zeta(V)$. 
\begin{remark}\label{rmk:brstabnil}
One advantage of using $Z(V)$ (rather than $\zeta$) is that it can be lifted to a Bridgeland stability condition on 
$D^b_{\rm nil} {\rm mod}\,\mathcal{A}$. In fact, one can check that it is equivalent to the perverse stability on $\bD_{\hat{Y}/Y}$ via the identification $D^b_{\rm nil} {\rm mod}\, \cA \cong \bD_{\hat{Y}/Y}$. \cite[Remark 4.6]{NN} gives a correspondence between stable objects in the heart of each category.
\end{remark}

We briefly review the classification of stable objects in ${\rm mod} \cA$  following \cite{NN}. 
We first set up the notation as follows. We define $\mathcal{A}$-module $V_\pm (m)$ by $(\C^m, \C^{m \pm 1})$ together with the maps corresponding arrows given as in the left two columns in \eqref{eqn:cpmmaps} where right (left, resp.) arrows are $x,z$ ($y, w$, resp.).
Likewise, $V_\pm^\dagger (m)$ denotes $\mathcal{A}$-module $(\C^m, \C^{m \mp 1})$ which can be visualized as the right two columns in \eqref{eqn:cpmmaps}. Up to isomorphism, one can assume that all arrows act by the identity map from $\C$ to itself.

\begin{equation}\label{eqn:cpmmaps}
\xymatrix@R=0.7pt@C=0.5pt{
&& \C   \\
\C \ar[urr] \ar[drr] && \\
&& \C\\
\C \ar[rru]& \\
\vdots&& \vdots  \\
\C \ar[rrd] &&  \\
&&\C \\
\C \ar[urr] \ar[drr] && \\
&& \C  \\
&V_+ (m)&}
\qquad\qquad
\xymatrix@R=0.7pt@C=0.5pt{
\C \ar[rrd]& \\
&& \C \\
\C \ar[rru] \ar[rrd] && \\
&& \C\\
\vdots&& \vdots  \\
&&\C \\
\C \ar[rrd]\ar[rru]&& \\
&& \C  \\
\C \ar[rru]&&  \\
&V_{-} (n)&}
\qquad \qquad
\xymatrix@R=0.7pt@C=0.5pt{
&& \C  \ar[dll] \\
\C && \\
&& \ar[ull] \ar[dll]\C\\
\C & \\
\vdots&& \vdots  \\
\C &&  \\
&&\C \ar[ull] \ar[dll]\\
\C && \\
&& \C \ar[ull] \\
&V_+' (m)&}
\qquad \qquad
\xymatrix@R=0.7pt@C=0.5pt{\C & \\
&& \C \ar[ull] \ar[dll] \\
\C && \\
&& \ar[ull]\C\\
\vdots&& \vdots  \\
&&\C \ar[dll] \\
\C && \\
&& \C \ar[ull] \ar[dll] \\
\C &&  \\
&V_-' (n)&}
\qquad \qquad
\end{equation}

We are now ready to state the classification result by Nagao-Nakajima.

\begin{thm}\cite[Theorem 4.5]{NN}\label{thm:nncl}
Stable modules $W$ in ${\rm mod} \mathcal{A}$ are classified as follows:
\begin{enumerate}

\item when $\zeta_0 > \zeta_1$
\begin{itemize}
\item $V_+ (m)$ ($m \geq 1$)
\item $\mathcal{A}$-module $W$ with $\underline{\dim} W =(1,1)$ parametrized by $\hat{Y}$
\item $V_- (n)$ ($n \geq 0$)
\end{itemize}

\item when $\zeta_0< \zeta_1$
\begin{itemize}
\item $V_+ ' (m)$ ($m \geq 1$)
\item $\mathcal{A}$-module $W$ with $\underline{\dim} W =(1,1)$ parametrized by $\hat{Y}^\dagger$
\item $V_-' (n)$ ($n \geq 0$)
\end{itemize}

\end{enumerate}
In particular, $\{\zeta_1=\zeta_2\}$ gives a wall, and the wall structure consists of only two chambers.
\end{thm}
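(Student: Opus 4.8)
The plan is to classify the stable objects directly as finite-dimensional representations of the quiver with potential \eqref{eq:mir-A}, carrying out the analysis in the chamber $\zeta_0 > \zeta_1$; the chamber $\zeta_0 < \zeta_1$ then follows verbatim after interchanging the two vertices together with the two pairs of arrows, which turns the families $V_\pm(m)$ into $V_\pm^\dagger(m)$ and the resolution $\hat{Y}$ into its flop $\hat{Y}^\dagger$. Write a module as $V = (V_0,V_1;X,Z,Y,W)$ with $X,Z\colon V_0 \to V_1$ and $Y,W\colon V_1 \to V_0$ subject to the Jacobian relations recorded in Theorem \ref{thm:sumainfs}. Since $\mathrm{mod}\,\mathcal{A}$ is a finite-length abelian category and every stable object is a brick, I will use freely that a stable $V$ is indecomposable with $\mathrm{End}(V) = \C$, and (by the Lemma comparing $\zeta$ with $\arg Z$) I will test stability with the phase $\arg Z(\cdot) \in (0,\pi]$, on which $\zeta_0$ is the maximal and $\zeta_1$ the minimal value attained on nonzero modules.

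The first step constrains the maps $X,Z$ using extremal subobjects and quotients. The pair $(\ker X \cap \ker Z,\,0)$ is a subobject supported at $v_0$, hence of maximal phase $\zeta_0$, so unless it is zero or all of $V$ it destabilizes $V$; this forces a stable $V$ to be either the vertex simple at $v_0$ or to have $(X,Z)\colon V_0 \to V_1 \oplus V_1$ injective. Dually, $(0,\,\mathrm{coker}(X,Z))$ is a quotient supported at $v_1$, of minimal phase $\zeta_1$, so a stable $V$ is either the vertex simple at $v_1$ or satisfies $\mathrm{im}\,X + \mathrm{im}\,Z = V_1$. Together these give $\dim V_0 \le 2\dim V_1$ and $\dim V_1 \le 2\dim V_0$ and place $X,Z$ in general position. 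Note that the extremal subobjects and quotients built from $Y,W$ carry the \emph{safe} phases ($v_1$-supported subobjects and $v_0$-supported quotients), so the control on $Y,W$ must instead be extracted from the relations of Theorem \ref{thm:sumainfs}.

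For the dimension vector $(1,1)$ the relations are vacuous (all four maps are scalars) and the only possible destabilizer is a $v_0$-supported line, which is excluded exactly when $(X,Z)\neq(0,0)$. Hence the stable $(1,1)$-modules are the points of the GIT quotient $\{(X,Z)\neq 0\}\big/\C^\times$ in which $(Y,W)$ has weight $-1$, i.e. the total space of $\mathcal{O}_{\bP^1}(-1)^{\oplus 2} = \hat{Y}$; crossing the wall $\zeta_0 = \zeta_1$ interchanges the roles of $(X,Z)$ and $(Y,W)$ and replaces this moduli space by $\hat{Y}^\dagger$, which is the representation-theoretic manifestation of the Atiyah flop.

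The main obstacle is the classification for all remaining dimension vectors, namely showing that the only other stable modules are the staircase strings $V_\pm(m)$ of \eqref{eqn:cpmmaps}, occurring exactly in the consecutive dimension vectors $(d,d+1)$ and $(d+1,d)$ (with the simples as the extreme cases), and that no stable module exists with $\dim V_0 = \dim V_1 \ge 2$. Here the plan is to substitute the injectivity and joint surjectivity of $(X,Z)$ into the Jacobian relations to produce a distinguished basis in which $X,Z,Y,W$ each act by $0$ or $\mathrm{id}$ along a single zigzag, and then to invoke the brick condition $\mathrm{End}(V)=\C$ to exclude the closed band modules and to force the zigzag to terminate, which simultaneously pins the dimension vector to a consecutive pair and the module to a unique isomorphism type in it. Producing this string normal form — equivalently, proving that stability together with the potential excludes stable representations in every dimension vector other than $(1,1)$ and the consecutive pairs — is the crux; the remaining bookkeeping (signs, and deciding whether $V_+$ or $V_-$ occurs) is routine once the normal form is available. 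Finally, since the three families have been exhausted and their central charges align only along $\zeta_0 = \zeta_1$, the locus $\{\zeta_0 = \zeta_1\}$ is the unique wall and the space of central charges $(z_0,z_1)$ decomposes into precisely the two chambers. As a consistency check, everything matches the derived equivalence $D^b_{\rm nil}\,{\rm mod}\,\mathcal{A} \cong \bD_{\hat{Y}/Y}$ invoked in Remark \ref{rmk:brstabnil}, under which the three families correspond to the line bundles $\mathcal{O}_C(k)$ (suitably shifted) and the point sheaves $\mathcal{O}_y$, whose perverse stability is governed by the same wall.
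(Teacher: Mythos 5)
First, note that the paper does not prove this statement at all: it is imported verbatim from Nagao--Nakajima (\cite[Theorem 4.5]{NN}), so there is no in-paper argument to compare yours against. Your attempt must therefore stand on its own as a proof of the classification, and it does not yet do so. The preliminary reductions are fine and correctly executed: $(\ker X\cap\ker Z,0)$ is a subobject of maximal phase and $(0,\operatorname{coker}(X,Z))$ a quotient of minimal phase, which forces $(X,Z)$ injective and jointly surjective for any non-simple stable $V$; and the dimension-vector-$(1,1)$ analysis, including the identification of the moduli with the total space of $\mathcal{O}_{\bP^1}(-1)^{\oplus 2}=\hat{Y}$, is correct. But the heart of the theorem --- that the \emph{only} remaining stable modules are the strings $V_\pm(m)$, that stables occur only in consecutive dimension vectors, and that no stable module has dimension vector $(n,n)$ with $n\ge 2$ --- is exactly the part you defer: you state a plan ("produce a string normal form, then invoke the brick condition") and explicitly call it the crux without carrying it out. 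That is a genuine gap, not bookkeeping.

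Moreover, the proposed route is not obviously viable as stated. The algebra $\mathcal{A}$ in \eqref{eq:mir-A} is not a string (special biserial) algebra, so there is no a priori string/band dichotomy for its indecomposables; asserting that injectivity and joint surjectivity of $(X,Z)$ plus the Jacobian relations yield a basis in which all four arrows act by $0$ or $\mathrm{id}$ along a single zigzag is essentially a restatement of the theorem rather than a reduction of it. Note also that Euler-form arguments give no help here: the category is Calabi--Yau of dimension $3$, so $\chi(V,V)=0$ identically and does not constrain dimension vectors of bricks. The case $\underline{\dim}V=(n,n)$, $n\ge 2$, is particularly delicate, since such a module has the same phase as the $(1,1)$ stables and must be excluded by exhibiting a destabilizing subobject (or by showing the only simples of that phase in the relevant abelian subcategory are the $(1,1)$ family); your extremal-subobject tests say nothing about it. To close the gap you would either need to carry out the normal-form analysis in full, or route the classification through the equivalence $D^b_{\rm nil}\,{\rm mod}\,\mathcal{A}\cong\bD_{\hat{Y}/Y}$ and the known description of (perverse) stable objects there --- which is closer to what Nagao--Nakajima actually do.
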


In order to precisely match the pictures in \cite{NN}, one should locate the vertex $v_1$ to the left in the quiver diagram. For instance, the vertex simple at the left vertex in \cite{NN} corresponds to $\mathcal{O}_C$ whereas in our case $v_1$ (sitting on the right) represents the Lagrangian $\bL_1(=S_1)$ which is mirror to $\mathcal{O}_C$. See \cite[Remark 4.6]{NN}.

\begin{remark}
Modules with dimension vector $(1,1)$ in (1) and (2) of Theorem \ref{thm:nncl} can be presented as
\begin{equation}\label{eqn:ccparay}
\xymatrix{ \C \ar@/^0.7pc/[rr]^{z} \ar@/^1.5pc/[rr]^{x}&&\ar@/^0.7pc/[ll]^{y} \ar@/^1.5pc/[ll]^{w} \C}
\end{equation}
for some $(x,y,z,w)\in \C^4$. Note that \eqref{eqn:ccparay} is nilpotent if and only if either $x=z=0$ or $y=w=0$.  
Notice that \eqref{eqn:ccparay} has three dimensional deformation (scaling actions of arrows $x,y,z,w$ up to overall rescaling) whereas $V_{\pm} (k)$ is  rigid for all $k$. Later, we will see that \eqref{eqn:ccparay} is mirror to a Lagrangian torus fibers whose first Betti number is 3, and $V_{\pm} (k)$ is mirror to a Lagrangian sphere.
\end{remark}

In what follows, we shall show that transformations of special Lagrangians in $D^b \mathcal{F}$ by our functor $\Psi$ recovers all the stable representations which are nilpotent. 

\subsection{Transformation of special Lagrangians in $D^b \mathcal{F}$ before/after flop}\label{subsec:bef/aft}

We next compute the transformation of other geometric objects in $D^b \mathcal{F} = D^b\langle \bL_0, \bL_1 \rangle$ under the mirror functor $\Psi$ induced by $\bL=\bL_0 \oplus \bL_1$. Recall from \ref{subsec:geobjf} that this category contains Lagrangian spheres and torus fibers  (intersecting spheres) as geometric objects.  

We begin with a torus fiber $L_c$ ($a <c <b$) in $X$ that intersects each of $\bL_0$ and $\bL_1$ along $T^2$ at $z=c$. (see \ref{subsec:geobjf}).
Suppose $L_c$ is also equipped with a flat line bundle $U$ whose holonomy along a circle in $z$-direction is $\rho$. Recall that we only consider $U$ with trivial holonomies along both of double conic fiber directions as otherwise they would not belong to the category.

\begin{lemma}\label{lem:imtoruspsi}
 The transformation of $(L_c ,\rho)$ by $\Psi$ is the representation of $(Q,\Phi)$ (after taking cohomology) given as
\begin{equation}\label{eqn:imtorusfib}
\xymatrix{ \C \ar@{..}[dd]  \ar@<0.4ex>[rr]_{\lambda_1} \ar@<-0.3ex>[rr]^{\lambda_2}&& \C \ar@{..}[dd] \\
&& \\
v_0 \cdot \ar@/^0.7pc/[rr]^{z} \ar@/^1.5pc/[rr]^{x}&&\ar@/^0.7pc/[ll]^{y} \ar@/^1.5pc/[ll]^{w} \cdot v_1 }
\end{equation}
where $[\lambda_1: \lambda_2]$ parametrizes the exceptional curve $C$ in $\hat{Y}$.
and the maps in the other direction (i.e. actions of $y$ and $w$) are zero.
\end{lemma}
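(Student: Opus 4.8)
\section*{Proof proposal for Lemma \ref{lem:imtoruspsi}}

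The plan is to avoid computing the $b$-deformed complex $CF(\bL,(L_c,\rho))$ with its differential $m_1^b$ from scratch, and instead to exploit the fact that $\Psi=\widetilde{\Psi}^{\bL}[3]$ is a triangulated functor together with the cone presentation of the torus fiber already established. By Proposition \ref{prop:teqcone} we have $(L_c,U_{\rho_z})\cong \Cone(\bL_0\xrightarrow{\alpha}\bL_1)$ in $D^b\mathcal{F}$, where $\alpha=\lambda_a\alpha_a+\lambda_b\alpha_b\in CF^1(\bL_0,\bL_1)$ with the ratio $(\lambda_a:\lambda_b)$ fixed in Lemma \ref{lem:ratioalpha}. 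Applying $\Psi$ and using that it sends cones to cones gives
$$\Psi(L_c,\rho)\cong \Cone\big(\Psi(\bL_0)\xrightarrow{\Psi_1(\alpha)}\Psi(\bL_1)\big),$$
so everything reduces to understanding the two vertex objects and the image of the degree-one morphism $\alpha$.

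First I would recall from Theorem \ref{thm:loccatid} that $\Psi(\bL_0)$ and $\Psi(\bL_1)$ are the vertex simple modules $S_{v_0}$ and $S_{v_1}$, each one-dimensional, supported at the corresponding vertex with all arrows acting by zero. Next I would identify $\Psi_1(\alpha)$. The degree-one part $CF^1(\bL_0,\bL_1)$ is spanned by $X$ and $Z$ (Theorem \ref{thm:sumainfs}), and under the Morse-Bott-to-Morse identification these are precisely the classes $\alpha_a,\alpha_b$ dual to the two clean-intersection circles $S^1_a,S^1_b$. By the very construction of the mirror functor in \cite{CHL,CHL2}, the deformation parameter $b=xX+yY+zZ+wW$ matches the Floer generators $X,Z$ with the quiver arrows $x,z:v_0\to v_1$; hence $\Psi_1(\alpha)=\lambda_1 x+\lambda_2 z\in{\rm Ext}^1(S_{v_0},S_{v_1})=\langle x,z\rangle$, with $[\lambda_1:\lambda_2]=(\lambda_a:\lambda_b)$ up to the normalization absorbed in Lemma \ref{lem:ratioalpha}. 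Finally I would compute this cone directly in ${\rm mod}\,\mathcal{A}$: an extension class in ${\rm Ext}^1(S_{v_0},S_{v_1})$ produces the module $M$ with $M_{v_0}=M_{v_1}=\C$, the arrows $x,z$ acting by $\lambda_1,\lambda_2$, and, since $S_{v_1}$ sits as a submodule so that the arrows $y,w$ out of $v_1$ must land in $(S_{v_1})_{v_0}=0$, the arrows $y,w$ acting by zero. This is exactly the representation \eqref{eqn:imtorusfib}, and letting $(c,\rho)$ vary shows that $[\lambda_1:\lambda_2]$ sweeps out the $\mathbb{P}^1=C$ parametrizing the nilpotent $(1,1)$-dimensional stable modules of \eqref{eqn:ccparay}.

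The main obstacle is the precise identification $\Psi_1(\alpha)=\lambda_1 x+\lambda_2 z$ with the correct constants, and verifying that no higher $A_\infty$-component of the functor contributes an extra term to the cone: one must check that a single morphism $\alpha$ is transported by the linear part $\Psi_1$ alone (the higher $\Psi_k$ being irrelevant for a single input on the level of the induced morphism), and that the Morse-Bott classes $\alpha_a,\alpha_b$ are matched to $X,Z$ with the normalizations of Lemma \ref{lem:ratioalpha}, which is where the relative areas $\omega(\Delta_1),\omega(\Delta_2)$ and the holonomy $\rho_z$ enter the ratio. As a cross-check I would also run the direct computation, writing $CF(\bL,(L_c,\rho))=CF(\bL_0,L_c)\oplus CF(\bL_1,L_c)$ via the clean intersections $T_0,T_1$, computing the twisted differential $m_1^b$ exactly as in Lemma \ref{lem:ratioalpha}, and confirming that the $m_1^b$-cohomology is one-dimensional over each vertex with the stated arrow actions; this matches the B-side statement that torus fibers are sent to skyscrapers $\mathcal{O}_y$ over points $y\in C$.
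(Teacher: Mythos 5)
Your proposal is correct and follows essentially the same route as the paper: both reduce to the cone presentation $(L_c,U_{\rho_z})\cong \Cone(\bL_0\xrightarrow{\alpha}\bL_1)$ from Proposition \ref{prop:teqcone}, use that $\Psi$ is triangulated with $\Psi(\bL_i)$ the vertex simples, classify the nontrivial nilpotent extensions of the two simples via $HF^1(\bL_0,\bL_1)\cong\mathrm{Ext}^1(\Psi(\bL_0),\Psi(\bL_1))$, and supplement this with the direct pearl-trajectory computation of $m_1^b$ that the paper also records as its ``geometric argument.''
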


\begin{proof}
Recall from Proposition \ref{prop:teqcone} that $(L_c,\rho)$ is a mapping cone $Cone(\bL_0 \stackrel{\alpha}\to \bL_1)$ for nonzero $\alpha \in HF^1(\bL_0,\bL_1)$ uniquely determined up to scaling. i.e. the following defines an exact triangle in $D^b \mathcal{F}$
\begin{equation}\label{eqn:sesrevisit}
 \bL_1 \to (L_c, \rho_z) \to \bL_0 \stackrel{[1]}{\to}.
\end{equation}

Since the functor $\Psi$ is a triangulated equivalence and $\alpha \neq 0$, $\Psi (L_c,\rho_z)$ is also a nontrivial extension of $\Psi(\bL_1)$ and $\Psi(\bL_0)$.  It is elementary exercise to show that all nontrivial extensions of these two representations which are nilpotent should be of the form given in \eqref{eqn:imtorusfib}. Moreover, since we have $HF^1 (\bL_0, \bL_1) \cong {\rm Ext}^1 (\Psi(\bL_0), \Psi(\bL_1))$ by the morphism level functor of $\Psi$, we see that $\Psi(L_c,\rho)$ gives all possible extensions as $\alpha$ varies, or equivalently $c$ and $\rho$ (in $(L_c,\rho)$) vary. Note that the family of such $(L_c,\rho)$ precisely parametrizes points in the exceptional curve $C$ by SYZ mirror construction due to \cite{CPU}.
\end{proof}

We remark that the cones 
$Cone(\bL_0 \stackrel{\lambda_a \alpha_a}{\to} \bL_1)$ and $Cone(\bL_0 \stackrel{\lambda_b \alpha_b}{\to} \bL_1)$ in $D^b \mathcal{F}$
(which are supposedly singular torus fibers)
are sent to the representations of the same form with one of $\lambda_i$ being zero, which together with $\Psi (L_c,\rho)$ completes the $\mathbb{P}^1$-family of stable representations.

\subsubsection*{Geometric argument}
We provide a more geometric computation of $\Psi (L_c,\rho)$ making use of pearl trajectory model introduced in \ref{subsec:FCbL}.
As shown in Figure \ref{fig:tcone_pert}, $L_c$ intersect $\bL_0 \cup \bL_1$ at eight different points after perturbation. Let 
\begin{equation*}
\begin{array}{l}
\bL_0 \cap L_c :=\{a_{00}, a_{01}, a_{10}, a_{11} \} \\
\bL_1 \cap L_c:=\{b_{00},b_{01},b_{10},b_{11} \}
\end{array}
\end{equation*}
(see Figure \ref{fig:tcone_pert}) where $\deg a_{ij} = i+j+1$ and $\deg b_{ij} = i+j$. Note that the degrees for generators in $\bL_0 \cap L_c$ are shifted by $1$ due to Floer theoretic grading from intersections in $z$-direction.  

\begin{figure}[h]
\begin{center}
\includegraphics[height=2.5in]{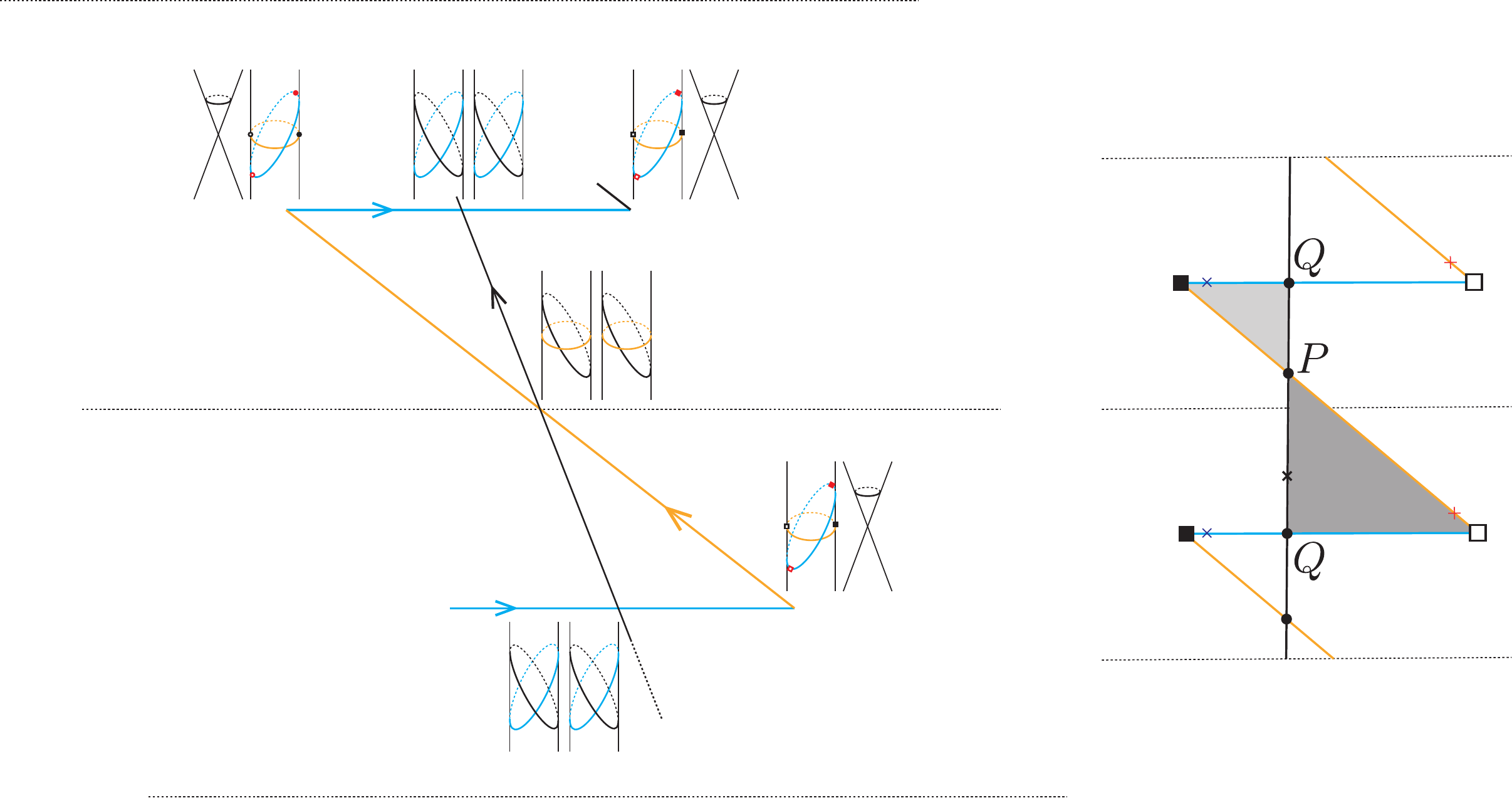}
\end{center}
\caption{Perturbation of $L_c, \bL_i$ and polygons contributing to $m_1^b$ on $CF(\bL,(L_c,\rho))$ }\label{fig:tcone_pert}
\end{figure}

$m_1^b$ for the above generators can be computed as follows:
\begin{equation*}
\begin{array}{lcl}
m_1^b (b_{00}) &=& (T^{\omega(\Delta_1)} x-\rho T^{\omega(\Delta_2)}z) a_{00} + xy b_{01} + zy b_{10}\\
m_1^b (b_{01}) &=& (\rho T^{\omega(\Delta_2)} z- T^{\omega(\Delta_1)}x) a_{01} -zw b_{11} \\
m_1^b (b_{10}) &=& (\rho T^{\omega(\Delta_2)} z- T^{\omega(\Delta_1)}x) a_{10} + xy b_{11} \\
m_1^b (b_{11}) &=& (T^{\omega(\Delta_1)} x- \rho T^{\omega(\Delta_2)} z) a_{11}
\end{array}
\,\,
\begin{array}{lcl}
m_1^b (a_{00}) &=&  yx a_{01} + wz a_{10} \\
m_1^b (a_{01}) &=&  -wz a_{11}\\
m_1^b (a_{10}) &=&  yx a_{11}\\
m_1^b (a_{11}) &=& 0
\end{array}
\end{equation*}
Here, the coefficients of the form $(T^{\omega(\Delta_1)} x-\rho T^{\omega(\Delta_2)} z)$ in front of $a_{ij}$ (in $m_1^b (b_{ij})$) is from the pair of holomorphic polygons projecting to the shaded triangles $\Delta_1$ and $\Delta_2$ in $z$-plane shown in Figure \ref{fig:tcone_pert}. (They are the same triangles appearing in the proof of Lemma \ref{lem:ratioalpha}.) Other terms are contributed by pearl trajectories consisting of two $2$-gons joined by a gradient flow, which have the same shape as the one contributing to $m_3$ drawn in Figure \ref{fig:pearl1}.

Therefore the $m_1^b$-cohomology is generated by $[a_{11}]$ as $\mathcal{A}$-module, and we have $T^{\omega(\Delta_1)} x[a_{11}] = \rho T^{\omega(\Delta_2)} z[a_{11}]$ (since their difference is $m_1^b (b_{11})$). Moreover, these are the only nontrivial scalar multiplication since $wz[a_{11}]=yx[a_{11}]=0$. In particular, we see that $\lambda_1,\lambda_2$ in \eqref{eqn:imtorusfib} satisfy $\lambda_1:\lambda_2=\rho T^{\omega(\Delta_2)} : T^{\omega (\Delta_1)}$.


\vspace{0.5cm}
On the other hand, $\Psi$ transforms Lagrangian spheres $S_k$ into the following stable representations.

\begin{prop}\label{prop:imsk}
The images of spheres $\{S_k : k \in \Z\}$ in $D^b \mathcal{F}$ (after taking cohomology) are given as follows:
\begin{enumerate}
\item For $m \geq 1$, $\Psi (S_m) = V_+ (m)$,
\item For $n \leq 0$, $\Psi (S_n) = V_- (|n|)$.
\end{enumerate}
where $V_\pm (k)$ are as in (left two columns of) \eqref{eqn:cpmmaps}.
\end{prop}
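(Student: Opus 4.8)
The plan is to induct on $|k|$, leveraging that $\Psi$ is a triangulated equivalence (Theorem \ref{thm:loccatid}) and therefore carries the mapping-cone relations of Proposition \ref{prop:smsmt} on the Fukaya side to mapping cones of $\mathcal{A}$-modules. Because the shift in $\Psi = \widetilde{\Psi}^{\bL}[3]$ is arranged so that all geometric objects land in the standard heart $\mathrm{mod}\,\mathcal{A}$, the exact triangle $S_m \to S_{m+1} \to L_c \stackrel{[1]}{\to}$ becomes a short exact sequence of modules $0 \to \Psi(S_m) \to \Psi(S_{m+1}) \to \Psi(L_c) \to 0$, and symmetrically the relation $S_{n-1} = \mathrm{Cone}(S_n[1] \to L_c[1])$ handles $n \leq 0$. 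First I would fix the base cases: by Theorem \ref{thm:loccatid} the generators $S_0 = \bL_0$ and $S_1 = \bL_1$ map to the two vertex simples, which are the degenerate endpoints $V_-(0)$ and $V_+(1)$ of the two families in \eqref{eqn:cpmmaps}.

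For the inductive step (say $m \geq 1$) I would feed into the short exact sequence the two inputs already computed: $\Psi(L_c)$ is the $(1,1)$-representation of Lemma \ref{lem:imtoruspsi} (with $x,z$ acting by $\lambda_1,\lambda_2$ and $y,w$ acting by zero), and $\Psi(S_m) = V_+(m)$ by the inductive hypothesis. On dimension vectors this gives $\underline{\dim}\,\Psi(S_{m+1}) = \underline{\dim}\,V_+(m) + (1,1) = \underline{\dim}\,V_+(m+1)$ and keeps the module nilpotent, so the real content is the module structure of the extension. Since $\Psi$ induces an isomorphism on $\mathrm{Hom}$-spaces and the defining class $\alpha$ is nonzero, the extension is non-split, with class in $\mathrm{Ext}^1(\Psi(L_c), V_+(m))$; I would then form this cone explicitly in $\mathrm{mod}\,\mathcal{A}$ and check that the two new generators attach via the $x,z$-arrows to the top of the staircase of $V_+(m)$, lengthening it into $V_+(m+1)$ while leaving $y,w$ acting as zero. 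The case $n \leq 0$ is identical, replacing $V_+$ by $V_-$ and using the downward cone relation.

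As a geometric cross-check (and the most self-contained route to the module structure) I would run the pearl-trajectory computation directly, exactly as in the ``geometric argument'' following Lemma \ref{lem:imtoruspsi}: the complex $\bigl(CF(\bL, S_{m+1}), m_1^b\bigr)$ has a preferred basis indexed by the perturbed intersection points of $\bL_0 \cup \bL_1$ with $S_{m+1}$, and reading off the $x,y,z,w$-action on $m_1^b$-cohomology reproduces the zigzag of \eqref{eqn:cpmmaps}, with $y,w$ acting as zero. This parallels the computation that identified $\Psi(L_c)$, now with more generators, and should give $V_+(m+1)$ (resp. $V_-(|n|)$) on the nose.

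The main obstacle is precisely this last identification: the dimension vector alone does not determine the module, so one must genuinely rule out decomposable extensions of the same dimension vector and verify that the arrows assemble into the connected, indecomposable staircase. A clean alternative that bypasses the explicit $m_1^b$ bookkeeping is to combine the additivity of the central charge (Proposition \ref{prop:centpresf}) with the Nagao--Nakajima classification (Theorem \ref{thm:nncl}): once one knows $\Psi(S_{m+1})$ is \emph{stable} — expected since $S_{m+1}$ is special Lagrangian — then $V_+(m+1)$ is the unique stable nilpotent module in its dimension vector, forcing the identification. The cost of this route is that it requires establishing stability of the image in advance, so I expect the honest proof to proceed through the direct inductive cone computation sketched above, with the classification serving only as confirmation.
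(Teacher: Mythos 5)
Your overall strategy coincides with the paper's: induct via the cone relations of Proposition \ref{prop:smsmt}, seed the induction with Theorem \ref{thm:loccatid}, and feed in $\Psi(L_c)$ from Lemma \ref{lem:imtoruspsi}; your pearl-trajectory cross-check also appears in the paper as a secondary ``geometric argument.'' The one place your write-up stops short is exactly the point you flag as ``the main obstacle'': deciding which nontrivial extension of $V_+(m)$ by the $(1,1)$-representation the cone produces. Both of your proposed resolutions are heavier than necessary --- forming the cone explicitly requires a chain-level representative of $\Psi_1(\alpha)$, and the stability route is circular in spirit since stability of $\Psi(S_{m+1})$ is not known in advance.

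The paper closes this gap with a short algebraic argument you did not find. First, $\dim\mathrm{Ext}^1(\Psi(L_c),V_+(m)) = \dim HF^1(L_c,S_m) = 1$, using the equivalence and the inductive hypothesis; since rescaling a nonzero extension class does not change the isomorphism type of the middle term, there is a \emph{unique} nontrivial extension up to isomorphism, so there are no decomposable competitors to rule out. Second, $V_+(m+1)$ is exhibited as such a nontrivial extension by the explicit injection $\sigma\colon V_+(m)\to V_+(m+1)$, $e_i\mapsto e_i+e_{i+1}$, $f_j\mapsto f_j+f_{j+1}$, whose cokernel is the $(1,1)$-representation with the two arrows acting by $\mathrm{id}$ and $-\mathrm{id}$ (after normalizing $L_c$ so that $\lambda_1=-\lambda_2$). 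Since $\Psi(S_{m+1})$ is a nontrivial extension (because $\alpha\neq 0$ and $\Psi$ is faithful), it must be $V_+(m+1)$. With that lemma inserted, your induction goes through verbatim, and the $n\le 0$ case is handled symmetrically as you indicate.
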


\begin{proof}
We will only prove (1), and the proof of (2) can be done in a similar manner. The statement is true for $m=1$ by Theorem \ref{thm:loccatid}. We will proceed by induction. Let us assume that it is true for $m$. By Proposition \ref{prop:smsmt}, we have
$$S_{m+1} \cong Cone (L_c \stackrel{\alpha}{\to} S_{m})$$
for some $\alpha$ which implies the exact triangle $S_{m} \to S_{m+1} \to L_c \stackrel{[1]}{\to}$ in $D^b \mathcal{F}$. Since $\alpha$ is a nonzero element in the Floer cohomology, $S_{m+1}$ is a nontrivial extension of $S_m$ and $L_c$. 
We see that $\Psi(S_{m+1})$ is an extension of $V_{+} (m)$ and $\Psi(L_c)=\xymatrix{ \C \ar@<0.4ex>[r]_{\lambda_1} \ar@<-0.3ex>[r]^{\lambda_2} & \C}$. 
For simplicity, we choose a suitable $L_c$ such that $\lambda_1 = -\lambda_2$, and hence, after rescaling two arrows act as $id$ and $-id$ respectively.

On the other hand, we already know one nontrivial extension of these two modules, which is nothing but $V_{+} (m+1)$. To see this, observe that the map $\sigma :V_+ (m) \to V_{+} (m+1)$ defined by
\begin{equation*}
 \sigma: 
 \begin{array}{l}
 e_i \mapsto e_i + e_{i+1}, \\
  f_j \mapsto f_j + f_{j+1}
  \end{array}
\end{equation*}
is an injective $\mathcal{A}$-module map, where $e_i$ (resp. $f_j$) denotes the standard basis of $V_+(m)$ spanning the $i$-th (resp. $j$-th) component $\C$ over $v_0$ (resp. $v_1$) for $1\leq i \leq m-1$ (resp. $1 \leq j \leq m$). See \eqref{eqn:basischoosevm} below.
\begin{equation}\label{eqn:basischoosevm}
\xymatrix@R=0.7pt@C=0.5pt{
&& \C=\langle f_1 \rangle    \\
 \langle e_1 \rangle =\C \ar[urr]^{x} \ar[drr]_{z} && \\
&& \C=\langle f_2 \rangle \\
\langle e_{2} \rangle =\C \ar@<-0.4ex>[rru]& \\
\vdots&& \vdots  \\
\langle e_{m-2	} \rangle = \C \ar@<0.5ex>[rrd]  &&  \\
&&\C = \langle f_{m-1} \rangle  \\
\langle e_{m-1} \rangle = \C \ar[urr]^{x} \ar[drr]_{z} && \\
&& \C =\langle f_{m} \rangle  \\
&V_+ (m)&}
\end{equation}
Now the cokernel of $\sigma$ is spanned by $[e_1]$ and $[f_1]$, and
$x [e_1] = [x e_1] = [f_1]$ and $z [e_1] = [z e_1] = [f_2] = -[f_1]$ since $f_1 + f_2$ is in the image of $\sigma$. Therefore, we have
$$ 0 \to V_+ (m) \stackrel{\sigma}{\to} V_{+} (m+1) \stackrel{	}{\to} \left( \xymatrix{ \C \ar@<0.4ex>[r]_{-id} \ar@<-0.3ex>[r]^{id} & \C} \right) \to 0.$$

Moreover, $V_+ (m+1)$ is the only nontrivial extension of $V_+ (m)$ and $ \xymatrix{ \C \ar@<0.4ex>[r]_{-id} \ar@<-0.3ex>[r]^{id} & \C}$ since
$$\dim \,{\rm Ext} \left( \xymatrix{ \C \ar@<0.4ex>[r]_{-id} \ar@<-0.3ex>[r]^{id} & \C}, V_+(m) \right) \cong \dim \, HF^1 (L_c, S_m) = 1.$$
(Here, we  used the fact that $\Psi$ is an equivalence, and the induction hypothesis that $\Psi(S_m) = V_+ (m)$.) We conclude that $S_{m+1}$ should map to $V_+ (m+1)$ by $\Psi$.

\end{proof}

\subsubsection*{Geometric argument}
Alternatively, one can compute the image of spheres under $\Psi$ directly by holomorphic disk counting (or more precisely computing $m_1^b$ on $CF(\bL,S_i)$). We give a brief sketch of the computation for $S_m$ for $m \geq 1$. On $z$-plane, the projection of $S_m$ intersects the interval $(a,b)$ $(m-1)$-times (not including the end points $a$ and $b$). Here, we perturb $\bL_0, \bL_1$ and $S_m$ along the fiber direction as in the proof of Lemma \ref{lem:imtoruspsi} so that they mutually intersect transversely.

Let us denote these $m-1$ points in $(a,b)$ by $c_1, c_2, \cdots, c_{m-1}$ as shown in Figure \ref{fig:smmm}.
These are the only locations where one has the highest degree intersections (i.e. degree $3$ elements in $CF(\bL,S_m)$ that map to degree 0 elements by $\Psi = \WT{\Psi}^{\bL} [3])$. 
Cohomology long exact sequence tells us that it is enough to consider these elements, as cohomologies of $\Psi(S_0)$ and $\Psi(S_1)$ are supported only at this degree.
We remark that the intersection $\bL \cap S_m$ occurring at $z=a$ and $z=b$ only produces degree 0 and 1 elements in the Floer complex, which are not in the highest degree.
\begin{figure}[h]
\begin{center}
\includegraphics[height=2.7in]{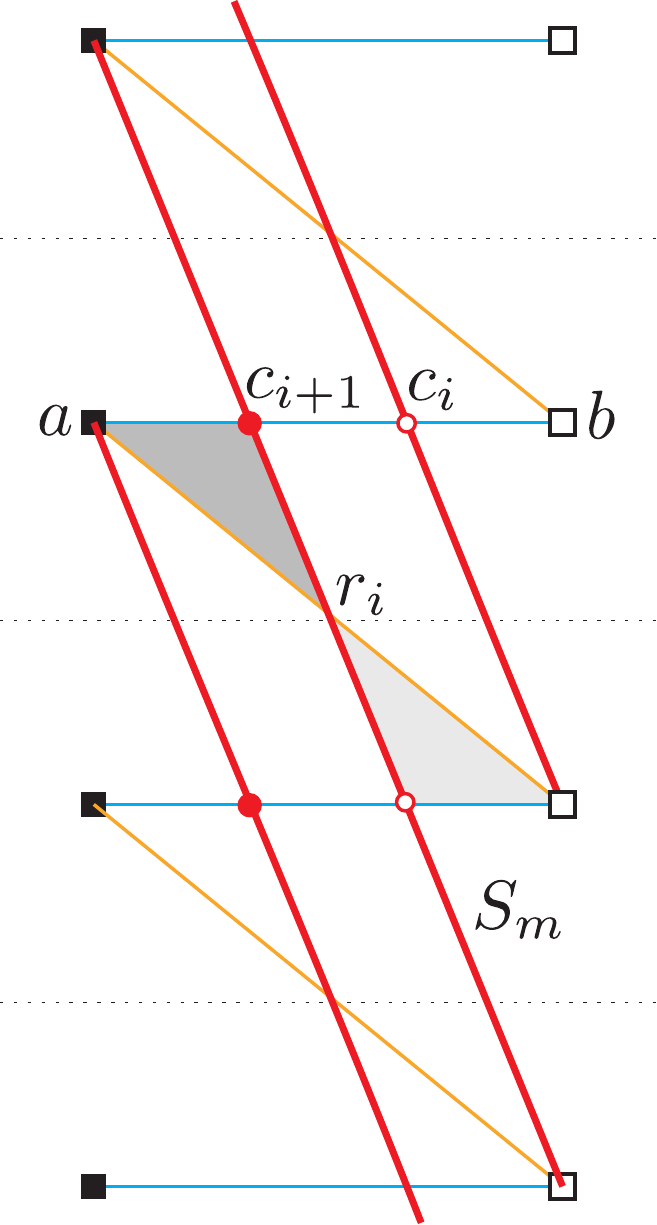}
\end{center}
\caption{Contributing holomorphic polygons to (the differential of) $\Psi(S_m)$}\label{fig:smmm}
\end{figure} 

Denote these highest degree intersection points by
$$\tilde{c}_1, \tilde{c}_2, \cdots, \tilde{c}_{m-1}$$
where $\tilde{c}_i$ projects down to $c_i$. (Obviously there is only one highest degree intersection point over each $\tilde{c}_i$ after perturbation.) There are pairs of triangles whose $z$-projections are given as shaded region in Figure \ref{fig:smmm}. Each of these pair contributes to $m_1^b$ with the same input, say $r_i$ as in the figure, and gives rise to
\begin{equation}\label{eqn:tricici1}
m_1^b (r_i)= z \tilde{c}_i - x \tilde{c}_{i+1}.
\end{equation}
Also, there are pearl trajectories with a similar shape to the ones contributing to formal deformation ($m_3$) of $\bL$, which lies over the interval $[a,b]$. 
These trajectories induce
\begin{equation}\label{eqn:looptrivf}
m_1^b (p_i) = \pm yx \tilde{c}_i, \quad m_1^b (q_i) = \pm wz \tilde{c}_i
\end{equation}
where $p_i$ and $q_i$ are the two degree 2 intersection points lying over $c_i$.

Set $e_i:=[\tilde{c}_i]$ which belong to the $v_0$-component of the resulting quiver representation . \eqref{eqn:tricici1} implies
$$ z e_i = x e_{i+1},$$
and we denote this element by $f_i$ which belongs to $v_1$-component. We also set $f_1 : = x e_1$ and $f_{m} := z e_{m-1}$. Combining \eqref{eqn:tricici1} and  \eqref{eqn:looptrivf} implies all other actions of $\cA$ are trivial. Therefore the resulting cohomology has precisely the same as $V_+ (m)$ as a $\cA$-module.

\subsubsection*{Effect of A-flop}
The images of $\bL_0=S_0$ and $\bL_1=S_1$ under the symplectomorphism $\rho : X_{s=0} \to X_{s=1}$ gives another Lagrangian spheres $S_0'$ and $S_1'$ (see Figure \ref{fig:rotdc1}). In addition, we have a new sequence of special Lagrangian spheres $\left\{S_k' :k\in\Z \right\}$ depicted in Figure \ref{fig:sprimes}. Readers are warned that $S_k'$ is not a image of $S_k$ under A-flop unless $k=0$ or $k=1$. Note that phases of other spheres lie between those of $S_0'$ and $S_1'$ due to our choice of gradings (or orientations) as in Figure \ref{fig:sprimes}. (Recall that we measure the phase angles in clockwise direction.)

We perform the same mirror construction making use of $\bL_0'=S_0'$ and $\bL_1'=S_1'$ which obviously produce the same quiver with the potential. Only difference is that now the ordering of the phases of $\bL_0'$ and $\bL_1'$ are switched. Namely, in this case, we have $\zeta_0' < \zeta_1'$ where $\zeta_i'$ is a phase of $\bL_i'$. Thus one can naturally expect to obtain stable representations in (2) of Theorem \ref{thm:nncl} by applying the resulting functor $\Psi' : D^b \langle S_0' ,S_1' \rangle \to D^b_{\rm nil} {\rm mod} \, \mathcal{A}$ to $\left\{S_k' \right\}$ and new torus fibers (which can be represented as straight vertical lines in Figure \ref{fig:sprimes}).

\begin{figure}[h]
\begin{center}
\includegraphics[height=2.7in]{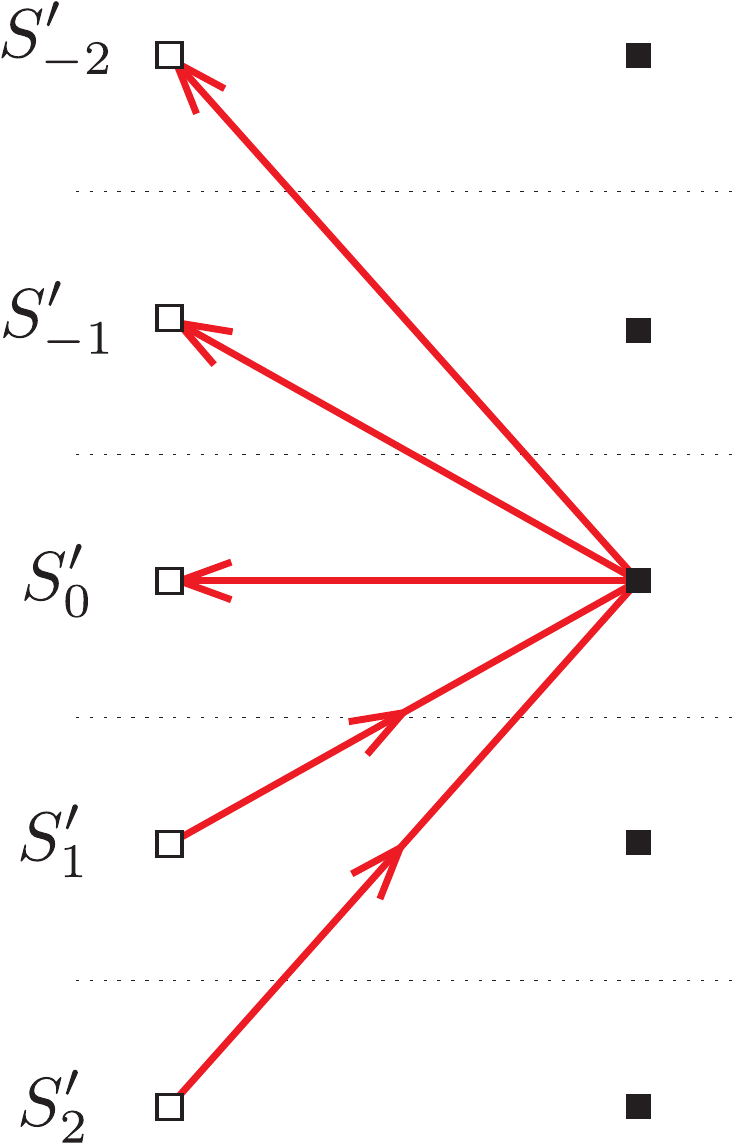}
\end{center}
\caption{Special Lagrangian spheres $S_k'$ in $X_{s=1}$}\label{fig:sprimes}
\end{figure}

\begin{prop}
Torus fibers intersecting $S_0'$ and $S_1'$ are transformed under $\Psi'$ into
\begin{equation*}
\xymatrix{ \C  & \C   \ar@<0.4ex>[l]_{\lambda_1} \ar@<-0.3ex>[l]^{\lambda_2} }
\end{equation*}
after taking $m_1^b$-cohomology, where $[\lambda_1:\lambda_2]$ parametrizes the exceptional curve $C^\dagger$ in $\hat{Y}^\dagger$.
The images of spheres $\{S_k' : k \in \Z\}$ in $D^b \mathcal{F}'$ (after taking cohomology) are given as follows:
\begin{enumerate}
\item For $m \geq 1$, $\Psi (S_m') = V_+^\dagger (m)$,
\item For $n \leq 0$, $\Psi (S_n') = V_-^\dagger (|n|)$.
\end{enumerate}
where $V_\pm^\dagger (k)$ are as in (right two columns of) \eqref{eqn:cpmmaps}.
\end{prop}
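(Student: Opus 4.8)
The plan is to transport the two computations already carried out for $X_{s=0}$ --- Lemma~\ref{lem:imtoruspsi} for the torus fibers and Proposition~\ref{prop:imsk} for the spheres --- to the flopped manifold $X_{s=1}$, the single genuinely new ingredient being the reversed phase ordering $\zeta_0'<\zeta_1'$, which is precisely what flips the direction of the arrows in the resulting quiver representations. First I would record that the formal deformation construction applied to $\bL'=\bL_0'\oplus\bL_1'$ yields exactly the same quiver with potential $(Q,\Phi)$ and the same algebra $\mathcal{A}$: by \eqref{eqn:cprrelsymp} there is an isomorphism $\End(\bL')\simeq\End(\bL)$ of $A_\infty$-algebras, and since $\rho_\ast(\bL_i)=\bL_i'$ this isomorphism matches the idempotents $\bL_0'\leftrightarrow v_0$ and $\bL_1'\leftrightarrow v_1$. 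Hence Theorem~\ref{thm:loccatid} applies verbatim to give an equivalence $\Psi':D^b\mathcal{F}'\to D^b_{\rm nil}{\rm mod}\,\mathcal{A}$ sending $\bL_0'=S_0'$ and $\bL_1'=S_1'$ to the vertex simples $(\C,0)$ and $(0,\C)$; these are exactly the base cases $V_-^\dagger(0)$ and $V_+^\dagger(1)$ of the two sequences in the statement.

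For the torus fibers I would repeat the mapping-cone argument of Lemma~\ref{lem:ratioalpha} and Proposition~\ref{prop:teqcone} inside $\mathcal{F}'$. The configuration in the $z$-plane (the matching segments together with a circle $|z|=c$) is identical to the one in $X_{s=0}$, so the same count of the two shaded triangles shows that a new torus fiber $L'$, equipped with a holonomy $\rho_z$, is a mapping cone of a degree-one morphism between $\bL_0'$ and $\bL_1'$. The essential difference is the sub/quotient order: because $L'$ is a special Lagrangian whose phase lies between $\zeta_0'$ and $\zeta_1'$, and now $\zeta_0'<\zeta_1'$, the stability convention forces the exact triangle $\bL_0'\to L'\to\bL_1'\stackrel{[1]}{\to}$ with $\bL_0'$ (the smaller phase) as the subobject; equivalently the connecting morphism lives in $HF^1(\bL_1',\bL_0')$, i.e. among the generators $Y,W$ that yield the arrows $y,w$, rather than in $HF^1(\bL_0',\bL_1')$ as in $X_{s=0}$. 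Applying $\Psi'$, the image is a nilpotent nontrivial extension whose subrepresentation is the $v_0$-simple $(\C,0)$; the subrepresentation condition forces $x=z=0$ while $(y,w)=(\lambda_1,\lambda_2)\neq(0,0)$, which is precisely the representation in the statement. As $c$ and $\rho_z$ vary, $[\lambda_1:\lambda_2]$ ranges over a $\mathbb{P}^1$, identified with the exceptional curve $C^\dagger\subset\hat{Y}^\dagger$ through the case $\zeta_0<\zeta_1$ of Theorem~\ref{thm:nncl} together with the nilpotency criterion of Remark~\ref{rmk:brstabnil}.

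For the spheres I would induct using the flopped analogue of Proposition~\ref{prop:smsmt}, namely $S_{m+1}'\cong Cone(L'\to S_m')$ for $m\geq1$ and the symmetric relation for $n\leq0$; these hold by the same holomorphic-polygon count applied to the nested matching paths drawn in Figure~\ref{fig:sprimes}. Granting $\Psi'(S_m')=V_+^\dagger(m)$, the associated exact triangle shows that $\Psi'(S_{m+1}')$ is a nontrivial extension of $V_+^\dagger(m)$ by $\Psi'(L')$. Since $\Psi'$ is an equivalence, $\dim{\rm Ext}(\Psi'(L'),V_+^\dagger(m))=\dim HF^1(L',S_m')=1$, so the extension is unique, and I would exhibit it explicitly by the staircase inclusion used in the proof of Proposition~\ref{prop:imsk} --- now assembled from the $y,w$ arrows rather than $x,z$ --- to be $V_+^\dagger(m+1)$. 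The range $n\leq0$ is handled identically with $V_-^\dagger$, and the indexing of $V_\pm^\dagger$ is pinned down by the two base cases above.

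The hard part will be the bookkeeping of the phase reversal: one must verify carefully that the stable fiber sits with $\bL_0'$ rather than $\bL_1'$ as subobject, and that this is exactly what interchanges the roles of $\{x,z\}$ and $\{y,w\}$. I expect this to reduce to recomputing the Floer gradings of the intersection points over $z=a$ and $z=b$ with respect to the flopped volume form $\Omega^\dagger$ on $X_{s=1}$; these gradings move the relevant degree-one generators from $CF(\bL_0',\bL_1')$ to $CF(\bL_1',\bL_0')$, and once this single point is established all the remaining $A_\infty$- and representation-theoretic computations are formally identical to those already settled for $X_{s=0}$.
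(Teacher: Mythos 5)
Your proposal is correct and follows exactly the route the paper intends: the paper's own ``proof'' is the single sentence that the argument is essentially the same as for Proposition \ref{prop:imsk}, and your write-up is a faithful expansion of that, correctly isolating the one genuinely new point (the reversed phase ordering $\zeta_0'<\zeta_1'$ forces the new torus fibers to be cones on morphisms in $HF^1(\bL_1',\bL_0')$, so the extension is carried by the $y,w$ arrows with $x=z=0$). Your closing caveat is also well placed: the cone direction must ultimately come from the Floer gradings/triangle count in the flopped configuration rather than from the stability convention, which would otherwise be circular with Section \ref{subsec:bridgelandfuk}.
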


The proof is essentially the same as that of Proposition \ref{prop:imsk}, and we will not repeat here.

\subsection{Bridgeland stability on $D^b \mathcal{F}$}\label{subsec:bridgelandfuk}
As mentioned in Remark \ref{rmk:brstabnil}, $D^b_{\rm nil} {\rm mod}\, \cA$ admits a Bridgeland stability through the isomorphism $D^b_{\rm nil} {\rm mod}\, \cA \cong \bD_{\hat{Y}/Y}$. Therefore, $D^b \mathcal{F}$ also admits a Bridgeland stability condition $(Z,\mathcal{S})$ by pulling-back the one on $D^b_{\rm nil} {\rm mod} \, \mathcal{A}$ via the equivalence $\Psi$. By Proposition \ref{prop:centpresf}, we see that the pull-back stability on $D^b \mathcal{F}$ is geometric in the sense that its central charge is given by the period $\int \Omega_{s=0}$.
Moreover, the discussion in \ref{subsec:bef/aft} tells us that the special Lagrangian spheres $S_k$ and tori (that intersects spheres) are stable objects in the heart. 

After applying A-flop, we consider the subcategory $\mathcal{F}'$ generated by $S_0'$ and $S_1'$ in $X_{s=1}$. By the same reason, $D^b \mathcal{F}'$ admits a Bridgeland stability condition whose stable objects (in the heart) are special Lagrangian spheres $S_k'$'s and new torus fibers in $X_{s=1}$.
Note that their inverse image under $\rho:X_{s=0} \to X_{s=1}$ are $\{S_i^\dagger\}$, the A-flop of the spheres $\left\{S_i \right\}$ in $X_{s=0}$, which were discussed in Section \ref{subsec:AflopLag}. 

By pulling back this stability condition on $D^b \mathcal{F}'$ via $\rho_\ast$, we get a new stability condition $(Z^\dagger,S^\dagger)$ on $D^b \mathcal{F}$ (whose central charge comes from $\rho^\ast \Omega_{s=1}$) with the set of stable objects 
$$\mathcal{S}^\dagger= \{L^\dagger : L \,\, \mbox{is stable with respect to} \,\, (Z,\mathcal{S})\}.$$
This proves Theorem \ref{thm:aflopstab}.


\bibliographystyle{amsalpha}
\bibliography{geometry}

\end{document}